\allowdisplaybreaks \numberwithin{equation}{section}
\numberwithin{equation}{section}
\newtheorem{theorem}{Theorem}[section]
\newtheorem{proposition}[theorem]{Proposition}
\newtheorem{lemma}[theorem]{Lemma}
\theoremstyle{definition}
\theoremstyle{remark}
\newtheorem{remark}[theorem]{Remark}
\begin{document}

\title[Stable plane Euler flows with concentrated and sign-changing vorticity]{Stable plane Euler flows  with concentrated and sign-changing vorticity}

 \author{Guodong Wang, Bijun Zuo}
\address{Institute for Advanced Study in Mathematics, Harbin Institute of Technology, Harbin 150001, P.R. China}
\email{wangguodong@hit.edu.cn}
 \address{College of Mathematical Sciences, Harbin Engineering University, Harbin {\rm150001}, PR China}
\email{bjzuo@amss.ac.cn}


\begin{abstract}
We construct a family of steady solutions to the two-dimensional incompressible Euler equation in a general bounded domain, such that the vorticity is supported in two well-separated regions of small diameter and converges to a pair of point vortices with opposite signs. Compared with previous results,  we do not need to assume the existence of an isolated local minimum point of the Kirchhoff-Routh function. Moreover, due to their variational nature, the solutions obtained are Lyapunov stable in $L^p$ norm of the vorticity.
The proofs are achieved by maximizing the kinetic energy over an appropriate family of rearrangement classes of sign-changing functions and studying the limiting behavior of the maximizers.

 \end{abstract}

\maketitle
\section{Introduction}

 The evolution of an inviscid homogeneous fluid in a  domain $D\subset\mathbb R^2$  is driven by the two-dimensional Euler equation, which in vorticity formulation can be written as follows:
\begin{equation}\label{ve1}
\partial_{t}\omega+\bm v\cdot\nabla\omega=0,\quad t>0,\,\,\bm x=(x_{1},x_{2})\in D,
\end{equation}
where  $\omega=\omega(t,\bm x)\in\mathbb R$ is the scalar vorticity  and $\bm v=\bm v(t,\bm x)\in\mathbb R^{2}$  is the velocity. In most situations, $\bm v$ can be recovered from $\omega$ via the Biot-Savart law.  For example, when  $D$ is bounded and simply-connected, and $\bm v$ is everywhere tangential on the boundary, the Biot-Savart law can be expressed in terms of the Green operator $\mathcal G$ as follows:
\begin{equation}\label{bsl}
\bm v=\nabla^{\perp}\mathcal G\omega:=(\partial_{x_{2}}\mathcal G\omega,-\partial_{x_{1}}\mathcal G\omega),
\end{equation}
where $ \perp$ denotes clockwise rotation through $\pi/2$; see \cite{MPu}, Chapter 1 for example.
In the literature, there are many global existence results for  equation \eqref{ve1} with initial vorticity in various function spaces; see \cite{De,DM,MB,MPu,V,Wo,Y}.

The evolution of vorticity by  equation \eqref{ve1} can be very complicated. However, when the vorticity is sufficiently concentrated in a finite number of small regions,  equation \eqref{ve1} can be well approximated by a much simpler ODE system, called the \emph{point vortex system}. The point vortex system was first introduced by Helmholtz \cite{Hel}, and later studied by Kirchhoff \cite{Kir}, Routh \cite{Rou}, and Lin \cite{Lin}. 
Roughly speaking, the point vortex system says that  $k$ separated blobs of concentrated vorticity evolve like $k$ individual particles, the locations $\bm x_{1},\cdot\cdot\cdot,\bm x_{k}$ of which are determined by the following ODE system:
\begin{equation}\label{kre}
\kappa_{i}\frac{d\bm x_{i}(t)}{dt}=\nabla_{\bm x_{i}}^{\perp} W_{\bm\kappa}(\bm x_{1},\cdot\cdot\cdot,\bm x_{k}),\quad i=1,\cdot\cdot\cdot,k,
\end{equation}
where  $\kappa_{i}$ is the integral of the $k$-th blob of vorticity, and 
 $W_{\bm \kappa}$ is  the Kirchhoff-Routh function related to $\bm\kappa=(\kappa_{1},\cdot\cdot\cdot,\kappa_{k})$, the precise definition of which is given by \eqref{krf} in Section 2. Such a approximation is called the \emph{desingularization of point vortices} and has been verified on a rigorous level by many authors; see \cite{Cap, CS1, CS2, Don, MP1,MP2,MP3,T3}.

Except for the evolutionary case, an related problem of importance is the \emph{steady  desingularization problem}, i.e., to construct a family of steady Euler flows such that the vorticity is concentrated around a finite number of points, which form an equilibrium of the point vortex system. Here by a steady Euler flow, we mean that its vorticity $\zeta$ satisfies 
 \begin{equation}\label{ssol}
\nabla^{\perp}\mathcal G\zeta\cdot\nabla\zeta=0.
 \end{equation}
When $\zeta$ is not differentiable in the classical sense, we need to interpret \eqref{ssol} in the following weak sense:
 \begin{equation}\label{ssol}
\int_{D}\zeta(\bm x)\nabla^{\perp}\mathcal G\zeta(\bm x)\cdot\nabla\varphi(\bm x) d\bm x=0,\quad\forall\,\varphi\in C_{c}^{\infty}(D).
 \end{equation}
Note that by the Sobolev embedding theorem and standard elliptic estimates, the integral in \eqref{ssol} makes sense for any $\zeta\in L^{4/3}(D)$.
In addition to existence, an interesting  related problem is to study the stability of these steady flows, especially when the equilibrium of the corresponding point vortex system is stable.

The steady  desingularization problem was studied for the first time by Turkington \cite{T12} via a variational approach. Turkington considered the following maximization problem:
 \begin{equation}\label{xpt}
\sup_{v\in\mathcal K_{\lambda}}E(v),
\end{equation}
where $\mathcal K_{\lambda}$ is a set of functions in $D$ parametrized by a large positive number $\lambda$, 
\begin{equation}\label{keps}
\mathcal K_{\lambda}=\left\{v\in L^{\infty}(D)\mid 0\leq v\leq \lambda \mbox{ a.e. in }D,\,\,\int_{D}vd\bm x=1\right\},
\end{equation}
and $E$ the kinetic energy defined by
\begin{equation}\label{kine}
E(v)=\int_{D}v(\bm x)\mathcal Gv (\bm x)d\bm x.
\end{equation}
Turkington proved the existence of a maximizer, and showed that any maximizer $\zeta$ has a patch form, i.e., $\zeta=\lambda\bm 1_{A_{\lambda}}$ for some unknown open set $A_{\lambda}$ depending on $\lambda$, where $\bm 1_{A_{\lambda}}$ denotes the characterization function of $A_\lambda$. More importantly, Turkington established fine asymptotic estimates for $A_\lambda$ as $\lambda\to+\infty$ based on the energy expansion method, showing that $A_{\lambda}$ ``shrinks'' to some global minimum point of the Robin function $H$ of the domain $D$ (defined by \eqref{derob} in Section 2)  as $\lambda\to+\infty$. 

Turkington's result was later extended by 
Elcrat and Miller \cite{EM2}. The extension is twofold: First, the steady flows they obtained have \emph{multiple} concentrated vorticity components. To achieve this, they imposed in the variational problem \eqref{xpt} an additional constraint that the vorticity is supported near a finite number of given points, which constitute an isolated local minimum point of the Kirchhoff-Routh function. Second, the steady vortex flows they obtained have general vorticity profiles, no longer limited to the patch case. To this end, Elcrat and Miller applied the variational principle on classes of rearrangements established by Burton \cite{B1,B2}. 

The above vorticity method developed by Turkington and Elcrat-Miller has proved to be an effective tool in the study of the steady desingularization problem as well as other related problems. See \cite{CQZZ,CWCV,CWWZ,CWZ,Dek} and the references therein.
Except for the vorticity method, 
an alternative approach to study the steady desingularization problem is to solve a certain semilinear elliptic equation with the stream function as the unknown function, which is usually called the stream function method; see  \cite{CLW,CPY,SV} and the references therein. 
When applying the stream function method, one usually obtains a steady flow with finer estimates for the   stream function, but less information on the vorticity, which makes it hard to analyze stability.

A key assumption in Elcrat-Miller's paper \cite{EM2}, as well as some similar papers  such as \cite{CWZ,CWCV}, is the existence of an isolated local minimum point of the Kirchhoff-Routh function $W_{\bm\kappa}$. However, as far as we know, there is no general result to guarantee the existence of such a point for a general bounded domain in the literature. We summarize some known existence and nonexistence results as follows:
\begin{itemize}
\item[(i)] If $k=1$ and $D$ is convex, then $W_{\bm\kappa}$ equals some positive multiple of the Robin function $H$.  By Caffarelli and Friedman \cite{CF}, $H$ is strictly convex in $D$, hence $W_{\bm\kappa}$  has a unique global minimum point in this case.
\item[(ii)] If $k\geq 2$, $\kappa_{1},\cdot\cdot\cdot,\kappa_{k}$ are all positive, and $D$ is convex, then by Grossi and Takahashi \cite{GT},  $W_{\bm\kappa}$ has no critical point, and thus has no isolated local minimum point. 
\item[(iii)] If $k=2$ and $\kappa_{1}\kappa_{2}<0$, then it is easy to check that $W_{\bm\kappa}$ attains its global minimum value in $D\times D$; however, it is not clear whether this global minimum point is isolated. If $D$ is a disk, then  every local minimum point in this case can not be isolated due to rotational symmetry.
 \end{itemize}
For the above reason,  it is not clear whether  a steady Euler flow  with concentrated and sign-changing vorticity exists for a general bounded domain, especially when there is no  isolated local minimum point for the Kirchhoff-Routh function.

 Our purpose in this paper is to construct a family of steady Euler flows with two concentrated vorticity components of opposite signs without any requirement on the geometry of the domain.
 Our strategy is as follows.
According to the variational principle for strictly convex functionals on rearrangement classes established by Burton \cite{B1}, a steady flow can be obtained as a maximizer of the kinetic energy  subject to the constraint that the vorticity is the rearrangement of a given function $v$. Then we choose a family of sign-changing functions $v$ and 
analyze the asymptotic properties of the maximizers as the measures of $\{v^{+}>0\}$, $\{v^{-}>0\}$ tend to zero and the integrals of  $v^{+}$, $-v^{-}$ tend to   $\kappa_{1}$, $\kappa_{2}$,  with  $\kappa_{1}>0$, $\kappa_{2}<0$ being prescribed.
 Under some reasonable assumptions, we can prove that for any maximizer $\zeta$,  the supports of $\zeta^{+},$ $\zeta^{-}$  ``shrink'' to two different points, the locations of which are totally determined by $\kappa_{1}$, $\kappa_{2}$ and the geometry of $D$.
In this way,  we provide a kind of specific desingularization for a steady point vortex pair with opposite signs. 

Our asymptotic analysis basically follows Turkington's energy method in \cite{T12}. Opposed to the situation of nonnegative rearrangements in Turkington's paper, two essential difficulties appear when considering sign-changing rearrangements. First, we need to deal with the interaction energy between the positive  and the negative vorticity components. Note that in Elcrat-Miller's paper \cite{EM2}, although there are multiple vorticity components, their interaction energies are all bounded due to the support constraint. Second,  the estimates for the Lagrangian multipliers in this paper is more complicated since the supports of the positive and the negative vorticity components may not be well separated. For example, uniform boundedness from below (above) for  the positive (negative) Lagrangian multiplier is no longer obvious as in \cite{EM2}, but requires careful treatment.

A notable feature of the steady flows we obtain is that they are Lyapunov stable in $L^p$ norm of the vorticity, which is mainly due to their variational nature. To our knowledge, there are very few examples on stable plane Euler flows  with concentrated vorticity in the literature. See \cite{CWCV,CWN,Wang} for more detailed discussions.

It is worth mentioning that our method also applies,  without any essential difficulty, to the steady desingularization problem in a multiply-connected bounded domain. The main difference lies in the Biot-Savart law, which takes a more complicated form in the multiply-connected case (see Appendix C in \cite{WZ} for example).

The rest of this paper is organized as follows. In Section 2, we present the rigorous mathematical setting and state our main results, i.e., Theorems \ref{thm1} and \ref{thm2}. In Section 3, we list some preliminaries lemmas that are used in the proofs.  Sections 4 and 5 are devoted to the proofs of Theorems \ref{thm1} and \ref{thm2}. 

\section{Main results}

In this section, we give the rigorous formulation of the problem and state the main results.

To begin with, we specify some notation and definitions that are used in the rest of this paper (although some of them have already  appeared in Section 1).
Let $D$ be  a smooth, bounded and simply-connected domain in $\mathbb R^{2}$.
Let $G$ be the Green function of $-\Delta$ in $D$ with zero Dirichlet boundary condition. Denote by $h$ the regular part of $G$, i.e.,
\[h(\bm x,\bm y)=-\frac{1}{2}\ln|\bm x-\bm y|-G(\bm x,\bm y).\]
 Note that $h$ is smooth and bounded from below in $D\times D.$ The Robin function $H$ of the domain $D$ is defined by
\begin{equation}\label{derob}
H(\bm x):=h(\bm x,\bm x),\quad \bm x\in D.
\end{equation} 
The Green operator $\mathcal G$ is  the inverse of $-\Delta$ in $D$ with zero Dirichlet boundary condition, which in terms of the Green function can be expressed as follows:
\[\mathcal Gv(\bm x)=\int_{D}G(\bm x,\bm y)v(y)d\bm y,\quad \bm x\in D.\]
Note that $1<p<+\infty$, $\mathcal G$ is a bounded linear bijective map from $L^p(D)$ onto $W^{2,p}\cap W^{1,p}_0(D)$.

 For $\bm\kappa=(\kappa_{1},\cdot\cdot\cdot,\kappa_{k})\in\mathbb R^{k}$ such that $\kappa_{1}, \cdot\cdot\cdot,\kappa_{k}\neq 0$, define the Kirchhoff-Routh function  $W_{\bm\kappa}$ related to $\bm\kappa$ as follows:
\begin{equation}\label{krf}
W_{\bm\kappa}(\bm x_{1},\cdot\cdot\cdot,\bm x_{k})=-\sum_{1\leq i<j\leq k}\kappa_{i}\kappa_{j}G(\bm x_{i},\bm x_{j})+\frac{1}{2}\sum_{i=1}^{k}\kappa_{i}^{2}H(\bm x_{i}),\,\,\,\mbox{$\bm x_{i}\in D,$ $\bm x_{i}\neq \bm x_{j}$ if $i\neq j$.}
\end{equation}
In particular, if  $k=1$, then $W_{\bm \kappa}$ is a positive multiple of $H$; if $\bm\kappa=(\kappa_{1},\kappa_{2})\in\mathbb R^{2}$, then $W_{\bm \kappa}$ has the form
\begin{equation}\label{krf2}
W_{\bm\kappa}(\bm x_{1},\bm x_{2})=-\kappa_{1}\kappa_{2}G(\bm x_{1},\bm x_{2})+\frac{1}{2}\kappa_{1}^{2}H(\bm x_{1})+\frac{1}{2}\kappa_{2}^{2}H(\bm x_{2}),\quad\mbox{$\bm x_{1},\bm x_2\in D,$ $\bm x_{1}\neq \bm x_{2}$.}
\end{equation}

 For a Lebesgue measurable function $v: D\to\mathbb R$,  the class of rearrangements of $v$ is defined as the set of all measurable functions $u:D\to\mathbb R$ such that
\[  \mathfrak m(\{\bm x\in D\mid u(\bm x)>s\})=\mathfrak m(\{\bm x\in D\mid v(\bm x)>s\})\quad\forall\,s\in\mathbb R,\]
where  $\mathfrak m(A)$ is the two-dimensional Lebesgue measure. 

Recall   the definition of the  kinetic energy $E$  by \eqref{kine} in Section 1.
The following theorem is about the maximization of $E$ relative to a given  rearrangement class of a fixed $L^p$ function, which is a straightforward corollary of the results in \cite{B1,B4,CWP,Wang}. 

 \begin{theorem}[\cite{B1,B4,CWP,Wang}]\label{thm0}
Let $1<p<+\infty$. Let $\mathcal R$ be the class of rearrangements of some function in $L^{p}(D)$. 
Denote by $\mathcal M$ the set of maximizers of $E$ relative to $\mathcal R$,
\[\mathcal M=\{v\in\mathcal R\mid E(v)=M\},\quad M:=\sup_{v\in\mathcal R}E(v).\]
 Then the following assertions hold:
\begin{itemize}
\item[(i)] $\mathcal M$ is  nonempty and compact  in $L^{p}(D)$.
\item[(ii)] For any $\zeta\in\mathcal M,$ there exists some increasing function $\phi:\mathbb R\to\mathbb R\cup\{\pm\infty\}$ such that
\begin{equation}\label{incp}
\zeta=\phi(\mathcal G\zeta)\quad\mbox{\rm a.e. in }\,\,D.
\end{equation}
\item[(iii)]  $\mathcal M$ is Lyapunov stable in the following sense: for any  $\epsilon>0$, there exists some $\delta>0$, such that for any smooth solution $\omega(t,\bm x)$ of the Euler equation \eqref{ve1},  
\[\min_{v\in\mathcal M}\|\omega(0,\cdot)-v\|_{L^{p}(D)}<\delta\Longrightarrow \sup_{t>0}\min_{v\in\mathcal M}\|\omega(t,\cdot)-v\|_{L^{p}(D)}<\epsilon.\]
\item[(iv)]  If additionally $4/3\leq p<+\infty,$  then any $\zeta\in\mathcal M$ satisfies \eqref{ssol}. 
\end{itemize}
 \end{theorem}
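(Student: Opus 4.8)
The plan is to assemble the four assertions from the classical variational theory of rearrangement classes, after checking that the energy $E$ and the rearrangement class $\mathcal R$ meet its hypotheses. The structural facts I would record first are the following. For $1<p<+\infty$ the Green operator $\mathcal G$ maps $L^p(D)$ boundedly into $W^{2,p}\cap W^{1,p}_0(D)$, which embeds compactly into $L^p(D)$; hence $\mathcal G$ is a compact, symmetric, positive-definite operator, so $E(v)=\int_D v\,\mathcal Gv\,d\bm x$ is strictly convex and weakly sequentially continuous on $L^p(D)$, with G\^ateaux derivative $E'(v)=2\mathcal Gv$. Moreover $\mathcal R$ is bounded in $L^p(D)$, its weak closure $\overline{\mathcal R}^{\,w}$ is convex and weakly sequentially compact, and the extreme points of $\overline{\mathcal R}^{\,w}$ are exactly the elements of $\mathcal R$ (Ryff's theorem). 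All of this is recorded in \cite{B1,B4}.

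For (i): since $E$ is weakly continuous and $\overline{\mathcal R}^{\,w}$ is weakly compact, $M=\sup_{\mathcal R}E=\sup_{\overline{\mathcal R}^{\,w}}E$ is attained, and because $E$ is strictly convex it is attained only at extreme points of $\overline{\mathcal R}^{\,w}$, i.e. only in $\mathcal R$; thus $\mathcal M\neq\emptyset$ and $\mathcal M\subset\mathcal R$. For compactness, a sequence $\{v_n\}\subset\mathcal M$ has a subsequence with $v_n\rightharpoonup\zeta$; weak continuity gives $E(\zeta)=M$, so $\zeta\in\mathcal M\subset\mathcal R$, and since $v_n$ and $\zeta$ all share the common $L^p$ norm of the generator of $\mathcal R$, weak convergence together with convergence of norms yields strong convergence in the uniformly convex space $L^p(D)$. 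For (ii): one invokes Burton's Euler--Lagrange characterization of maximizers of a convex functional over a rearrangement class — at $\zeta\in\mathcal M$ the function $\zeta$ must be an increasing function of $\tfrac12 E'(\zeta)=\mathcal G\zeta$, which is precisely \eqref{incp}. These steps are exactly the content of \cite{B1,B4,CWP,Wang}, which I would cite.

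For (iii), the two inputs are that $E$ is conserved along smooth solutions of \eqref{ve1}, and that a smooth solution transports $\omega$ by a measure-preserving diffeomorphism, so $\omega(t,\cdot)$ stays in the rearrangement class of $\omega(0,\cdot)$ for every $t$. Stability of $\mathcal M$ then follows by a compactness-and-contradiction argument: if it failed, there would be $\epsilon_0>0$, solutions $\omega^{(n)}$ with $\min_{v\in\mathcal M}\|\omega^{(n)}(0,\cdot)-v\|_{L^p}\to0$, and times $t_n$ with $\min_{v\in\mathcal M}\|\omega^{(n)}(t_n,\cdot)-v\|_{L^p}\geq\epsilon_0$; passing to a weak limit of $\omega^{(n)}(t_n,\cdot)$, using weak continuity of $E$ (the limit then has energy $\geq M$, hence lies in $\overline{\mathcal R}^{\,w}$ and, by strict convexity, in $\mathcal M$) together with the near-invariance of the distribution functions, one upgrades to strong convergence and reaches a contradiction. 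The delicate point here — which I expect to be the main obstacle — is that the perturbed datum $\omega(0,\cdot)$ need not itself lie in $\mathcal R$, so the assertion is really stability \emph{relative to the rearrangement class of the datum}; this requires the continuity of the maximal energy as a function of the generator, which is exactly the technical input supplied by \cite{CWP,Wang}.

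For (iv), assume $4/3\leq p<+\infty$, let $\zeta\in\mathcal M$ and $\psi:=\mathcal G\zeta\in W^{2,p}\cap W^{1,p}_0(D)\subset C^0(\overline D)$. By the two-dimensional Sobolev embedding $W^{2,p}\hookrightarrow W^{1,p'}$ — the borderline case $p=4/3$ giving $W^{2,4/3}\hookrightarrow W^{1,4}$ with $4=(4/3)'$ — one has $\nabla\psi\in L^{p'}(D)$, hence $\zeta\,\nabla^\perp\psi\in L^1(D)$ and the integral in \eqref{ssol} is well defined. Now by (ii), $\zeta=\phi(\psi)$ with $\phi$ increasing; letting $\Phi$ be a primitive of $\phi$ (after a routine truncation in $\phi$), the Sobolev chain rule gives $\zeta\,\nabla^\perp\psi=\nabla^\perp\Phi(\psi)$ as $L^1(D)$ vector fields, with $\Phi(\psi)\in W^{1,1}(D)\cap L^\infty(D)$, so that integrating by parts and using $\partial_{x_1}\partial_{x_2}\varphi=\partial_{x_2}\partial_{x_1}\varphi$ yields $\int_D\zeta\,\nabla^\perp\psi\cdot\nabla\varphi\,d\bm x=\int_D\nabla^\perp\Phi(\psi)\cdot\nabla\varphi\,d\bm x=0$ for all $\varphi\in C_c^\infty(D)$. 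Thus $\zeta$ satisfies \eqref{ssol}. Apart from the subtlety noted in (iii), no essential difficulty is expected, since the theorem merely collects results from \cite{B1,B4,CWP,Wang}.
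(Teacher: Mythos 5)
Your proposal is essentially correct and matches the paper's treatment: the paper offers no proof of this theorem beyond attributing (i)--(ii) to Theorem 7 and Corollary 2 of \cite{B1}, (iii) to Theorem 3.1 of \cite{Wang}, and (iv) to Lemma 6 of \cite{B4} or Theorem 1.2 of \cite{CWP}, and your sketch faithfully reconstructs the standard arguments behind those citations (maximization of a strictly convex, weakly sequentially continuous functional over the weakly compact convex hull of $\mathcal R$, whose extreme points are exactly $\mathcal R$; Burton's Euler--Lagrange relation; energy conservation plus rearrangement invariance plus compactness; and the $\nabla^{\perp}\Phi(\psi)$ identity with the borderline check $W^{2,4/3}\hookrightarrow W^{1,4}$). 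The only places your sketch is looser than the cited sources are the parenthetical in (iii) --- having energy $\geq M$ does not by itself place the weak limit in $\overline{\mathcal R}^{\,w}$; membership comes from the stability of the distribution-function constraints, after which energy $=M$ and strict convexity yield membership in $\mathcal M$ --- and the chain rule for the merely increasing, possibly infinite-valued $\phi$ in (iv), both of which are precisely the technical points the references are cited to supply.
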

 
Items (i) and (ii) in  Theorem \ref{thm0} follow from Theorem 7 and Corollary 2 in \cite{B1}; item (iii) follows from Theorem 3.1 in \cite{Wang}; item (iv) follows from Lemma 6 in \cite{B4} or  Theorem 1.2 in \cite{CWP}.

To achieve our main purpose in this paper, i.e., to obtain steady Euler flows with two concentrated vorticity components of opposite signs, 
we consider the maximization of $E$ relative to an appropriate family of rearrangement classes and study the asymptotic behavior of the maximizers.

Let  $\mathcal R_{\bm\varepsilon}\subset L^p(D),$ $1<p<+\infty$, be a rearrangement class  parametrized by a two-dimensional vector $\bm{\varepsilon}=(\varepsilon_{1},\varepsilon_{2})$ with $0<\varepsilon_{1}, \varepsilon_{2}<<1$, such that 
\begin{equation}\label{ep0}
\mathfrak m(\{ \bm x\in D\mid v(\bm x) >0\})= \pi\varepsilon_{1}^{2}, \quad \mathfrak m(\{ \bm x\in D\mid v(\bm x) <0\})= \pi\varepsilon_{2}^{2},\quad \forall\, v\in\mathcal R_{\bm\varepsilon}.
\end{equation}
We make the following assumptions on $\mathcal R_{\bm\varepsilon}:$
\begin{itemize}
\item[(H1)]  For arbitrarily chosen $v_{\bm\varepsilon}\in\mathcal R_{\bm\varepsilon},$
\begin{equation}\label{ep1}
\lim_{|\bm\varepsilon|\to 0}\|v_{\bm\varepsilon}^{+}\|_{L^{1}(D)}= \kappa_{1}, \quad \lim_{|\bm\varepsilon|\to 0}\|v_{\bm\varepsilon}^{-}\|_{L^{1}(D)}= -\kappa_{2},
\end{equation}
where   $\kappa_{1}>0,\kappa_{2}<0$ are prescribed;

\item[(H2)]  For arbitrarily chosen $v_{\bm\varepsilon}\in \mathcal R_{\bm\varepsilon}$, 
\begin{equation}\label{ep2}
\limsup_{|\bm\varepsilon|\to 0}\varepsilon_{1}^{2/p'}\|v_{\bm\varepsilon}^{+}\|_{L^{p}(D)}<+\infty,\quad  \limsup_{|\bm\varepsilon|\to 0}\varepsilon_{2}^{2/p'}\|v_{\bm\varepsilon}^{-}\|_{L^{p}(D)}<+\infty,
\end{equation}
where $p'=p/(p-1)$ is the conjugate exponent of $p$.
\end{itemize}
Note that the above assumptions on $\mathcal R_{\bm\varepsilon}$ are weaker than those in \cite{EM2,T12}.

 We consider the maximization problem
\begin{equation}\label{maxep}
M_{\bm\varepsilon}=\sup_{v\in  \mathcal R_{\bm\varepsilon}}E(v).
\end{equation}
By Theorem \ref{thm0}, the set of maximizers $\mathcal M_{\bm\varepsilon}$  for  \eqref{maxep} is not empty and satisfies (i)-(iv) in Theorem \ref{thm0}.
 For any $\zeta\in\mathcal M_{\bm\varepsilon},$ define the positive vortex core $V^+_\zeta$ and the negative vortex core $V^-_\zeta$ related to $\zeta$ as follows:
 \begin{equation}\label{defvc}
 V^+_\zeta:= \{\bm x\in D\mid \zeta(\bm x)>0\},\quad V^-_\zeta:= \{\bm x\in D\mid \zeta(\bm x)<0\}.
 \end{equation}
 
Our first result is the following theorem.
\begin{theorem}\label{thm1}
Let $1<p<+\infty,$  $\kappa_1>0,$ $\kappa_2<0$ be given, and $\bm{\varepsilon}=(\varepsilon_{1},\varepsilon_{2})$ be a parameter vector with $0<\varepsilon_{1}, \varepsilon_{2}<<1$. Let $\{\mathcal R_{\bm\varepsilon}\}\subset L^p(D)$   be a family of rearrangement classes such that \eqref{ep0}-\eqref{ep2} hold. Let $\mathcal M_{\bm\varepsilon}$ be the set of maximizers of \eqref{maxep}. Then the following assertions hold:
\begin{itemize}
\item [(i)](Size of vortex cores) There exists  $C>0$, not depending  on $\bm\varepsilon$,  such that   
\[\mbox{\rm diam(}V^+_\zeta)\leq C\varepsilon_{1},\quad\mbox{\rm diam(}V^-_\zeta)\leq C\varepsilon_{2},\quad \forall\,\zeta\in\mathcal R_{\bm\varepsilon},\]
where $V^\pm_\zeta$ is defined by \eqref{defvc} and $\mbox{diam}(V^\pm_\zeta)$ is the diameter of $V^\pm_\zeta$.
\item[(ii)](Limiting location of vortex cores) 
For arbitrarily chosen $\zeta_{\bm\varepsilon}\in\mathcal R_{\bm\varepsilon},$ 
 suppose up to a subsequence
\begin{equation}\label{pp2}
\mathbf X(\zeta_{\bm\varepsilon}^{+})\to\bar{\bm x}_{1}\in \bar D,\,\,\mathbf X(\zeta_{\bm\varepsilon}^{-})\to\bar{\bm x}_{2}\in \bar D \quad\mbox{as } |\bm\varepsilon|\to0.
\end{equation}
where $\mathbf X(\zeta_{\bm\varepsilon}^{\pm})$ is the center of $\zeta^{\pm}_{\bm\varepsilon}$, 
\begin{equation}\label{dece}
\mathbf X(\zeta_{\bm\varepsilon}^{\pm})=\frac{1}{\int_D \zeta_{\bm\varepsilon}^{\pm}(\bm x) d\bm x}\int_{D}\bm x \zeta_{\bm\varepsilon}^{\pm}(\bm x)d\bm x.
\end{equation}
Then $ \bar{\bm x}_{1},\bar{\bm x}_{2}\in  D$, $ \bar{\bm x}_{1}\neq \bar{\bm x}_{2}$, and  $(\bar{\bm x}_{1},\bar{\bm x}_{2})$ is a global minimum point of the Kirchhoff-Routh function $W_{\bm \kappa}$ with $\bm\kappa=(\kappa_{1},\kappa_{2})$.
\end{itemize}
\end{theorem}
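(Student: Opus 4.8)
The strategy is Turkington's energy-expansion method \cite{T12}, adapted to sign-changing rearrangements. Fix a global minimum point $(\bm y_{1},\bm y_{2})$ of $W_{\bm\kappa}$ over $\{(\bm x_{1},\bm x_{2})\in D\times D:\bm x_{1}\ne\bm x_{2}\}$, which exists since $W_{\bm\kappa}$ is bounded below and, because $\kappa_{1}\kappa_{2}<0$, tends to $+\infty$ both as $\bm x_{1}\to\bm x_{2}$ (through $-\kappa_{1}\kappa_{2}G$) and as either point approaches $\partial D$ (through $H$). Put $m_{1,\bm\varepsilon}=\|v_{\bm\varepsilon}^{+}\|_{L^{1}(D)}$, $m_{2,\bm\varepsilon}=\|v_{\bm\varepsilon}^{-}\|_{L^{1}(D)}$, so $m_{1,\bm\varepsilon}\to\kappa_{1}$, $m_{2,\bm\varepsilon}\to-\kappa_{2}$. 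For $\zeta\in\mathcal M_{\bm\varepsilon}$ split $E(\zeta)=E^{+}(\zeta)+E^{-}(\zeta)-2E^{\mathrm{int}}(\zeta)$ with $E^{\pm}(\zeta)=\int_{D}\zeta^{\pm}\mathcal G\zeta^{\pm}$, $E^{\mathrm{int}}(\zeta)=\int_{D}\zeta^{+}\mathcal G\zeta^{-}\ge0$, and note that $\zeta^{+}$ (resp. $\zeta^{-}$) is a rearrangement of $v_{\bm\varepsilon}^{+}$ (resp. $v_{\bm\varepsilon}^{-}$), supported on a set of measure $\pi\varepsilon_{1}^{2}$ (resp. $\pi\varepsilon_{2}^{2}$). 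Inserting $G=-\tfrac12\ln|\bm x-\bm y|-h$ and extracting the leading logarithm, write $\int\!\!\int(-\tfrac12\ln|\bm x-\bm y|)\zeta^{\pm}\zeta^{\pm}=\tfrac12 m_{i,\bm\varepsilon}^{2}\ln\tfrac1{\varepsilon_{i}}+\gamma_{i}(\zeta)$; a Young/Hölder estimate using (H2) (the kernel $\ln\tfrac{\varepsilon_{i}}{|\cdot|}\mathbf 1_{B_{\varepsilon_{i}}}$ has $L^{p'}$-norm $\sim\varepsilon_{i}^{2/p'}$) shows $\gamma_{i}(\zeta)$ is bounded uniformly in $\bm\varepsilon$ and $\zeta$, and the Riesz rearrangement inequality gives $\gamma_{i}(\zeta)\le\gamma_{i}$, $\gamma_{i}$ being the value for the symmetric-decreasing rearrangement of $v_{\bm\varepsilon}^{\pm}$ onto a ball of radius $\varepsilon_{i}$. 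Testing \eqref{maxep} with $\zeta^{\mathrm{test}}=(v_{\bm\varepsilon}^{+})^{\ast}_{\bm y_{1}}-(v_{\bm\varepsilon}^{-})^{\ast}_{\bm y_{2}}$ (symmetric-decreasing rearrangements centred at $\bm y_{1},\bm y_{2}$; this lies in $\mathcal R_{\bm\varepsilon}$ for $|\bm\varepsilon|$ small, the two balls then being disjoint and inside $D$), using continuity of $h,G$ near $(\bm y_{i},\bm y_{i})$ and $(\bm y_{1},\bm y_{2})$ and $-\kappa_{1}^{2}H(\bm y_{1})-\kappa_{2}^{2}H(\bm y_{2})+2\kappa_{1}\kappa_{2}G(\bm y_{1},\bm y_{2})=-2W_{\bm\kappa}(\bm y_{1},\bm y_{2})$, yields
\[
M_{\bm\varepsilon}\ \ge\ \tfrac12 m_{1,\bm\varepsilon}^{2}\ln\tfrac1{\varepsilon_{1}}+\tfrac12 m_{2,\bm\varepsilon}^{2}\ln\tfrac1{\varepsilon_{2}}+\gamma_{1}+\gamma_{2}-2W_{\bm\kappa}(\bm y_{1},\bm y_{2})+o(1).
\]
Conversely, bounding $-\int\!\!\int h\,\zeta^{\pm}\zeta^{\pm}$ from above crudely and using $\gamma_{i}(\zeta)\le\gamma_{i}$ gives $E^{\pm}(\zeta)\le\tfrac12 m_{i,\bm\varepsilon}^{2}\ln\tfrac1{\varepsilon_{i}}+\gamma_{i}+C$; combined with $E^{\mathrm{int}}\ge0$ and the lower bound, this forces simultaneously the two uniform bounds $E^{\mathrm{int}}(\zeta)\le C$ and $E^{\pm}(\zeta)=\tfrac12 m_{i,\bm\varepsilon}^{2}\ln\tfrac1{\varepsilon_{i}}+O(1)$.

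\emph{Proof of (i).} By Theorem \ref{thm0}(ii), $\zeta=\phi(\mathcal G\zeta)$ with $\phi$ increasing, so up to a null set $V_{\zeta}^{+}=\{\mathcal G\zeta>a_{1}\}$, $V_{\zeta}^{-}=\{\mathcal G\zeta<a_{2}\}$ for suitable $a_{1}\ge a_{2}$. Put $w=(\mathcal G\zeta-a_{1})^{+}$; then $-\Delta w=\zeta^{+}$, $w\in H_{0}^{1}(V_{\zeta}^{+})$, and on $V_{\zeta}^{+}$ one computes $\int w\,\zeta^{+}=E^{+}(\zeta)-E^{\mathrm{int}}(\zeta)-a_{1}m_{1,\bm\varepsilon}$. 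Talenti's comparison theorem (replacing $V_{\zeta}^{+}$ by a ball $B_{\varepsilon_{1}}$ of equal measure) gives $\int w\,\zeta^{+}\le\int\!\!\int_{B_{\varepsilon_{1}}}G_{B_{\varepsilon_{1}}}(\zeta^{+})^{\ast}(\zeta^{+})^{\ast}\le C$, using $G_{B_{\varepsilon_{1}}}(\bm x,\bm y)\le\tfrac12\ln\tfrac{2\varepsilon_{1}}{|\bm x-\bm y|}$ and (H2); together with $E^{+}\ge\tfrac12 m_{1,\bm\varepsilon}^{2}\ln\tfrac1{\varepsilon_{1}}-C$ and $E^{\mathrm{int}}\le C$ this forces the Lagrange-multiplier bound $a_{1}\ge\tfrac12 m_{1,\bm\varepsilon}\ln\tfrac1{\varepsilon_{1}}-C\to+\infty$, and symmetrically $a_{2}\le-\tfrac12 m_{2,\bm\varepsilon}\ln\tfrac1{\varepsilon_{2}}+C\to-\infty$. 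Since $\mathcal G\zeta\le\mathcal G\zeta^{+}$ and (by the same splitting and (H2)) $\mathcal G\zeta^{+}(\bm x)\le\tfrac12 m_{1,\bm\varepsilon}\ln\tfrac1{\varepsilon_{1}}+C$ uniformly in $\bm x$, every $\bm x\in V_{\zeta}^{+}$ satisfies $\tfrac12 m_{1,\bm\varepsilon}\ln\tfrac1{\varepsilon_{1}}-C<a_{1}<\mathcal G\zeta^{+}(\bm x)$; splitting $\mathcal G\zeta^{+}(\bm x)=\int_{B_{N\varepsilon_{1}/2}(\bm x)}+\int_{D\setminus B_{N\varepsilon_{1}/2}(\bm x)}$ — in the second integral $-\tfrac12\ln|\bm x-\bm y|$ is smaller by $\tfrac12\ln\tfrac N2$ — one deduces that, for a large fixed $N$, more than half the positive mass lies in $B_{N\varepsilon_{1}/2}(\bm x)$. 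Two points of $V_{\zeta}^{+}$ more than $N\varepsilon_{1}$ apart would then give disjoint balls carrying more than the total positive mass, a contradiction; hence $\mathrm{diam}(V_{\zeta}^{+})\le N\varepsilon_{1}$, and likewise $\mathrm{diam}(V_{\zeta}^{-})\le C\varepsilon_{2}$.

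\emph{Proof of (ii).} Along the chosen subsequence $\mathbf X(\zeta_{\bm\varepsilon}^{\pm})\to\bar{\bm x}_{1},\bar{\bm x}_{2}$, and by (i) $\mathrm{supp}\,\zeta_{\bm\varepsilon}^{\pm}\subseteq B_{C\varepsilon_{i}}(\mathbf X(\zeta_{\bm\varepsilon}^{\pm}))$ shrinks to $\bar{\bm x}_{1},\bar{\bm x}_{2}$. First, $\bar{\bm x}_{1},\bar{\bm x}_{2}\in D$: a reflection estimate shows $h(\bm x,\bm y)\ge\tfrac12\ln\tfrac1{C\rho}$ for $\bm x,\bm y$ in a core at distance $\rho$ from $\partial D$, so $E^{+}\le\tfrac12 m_{1,\bm\varepsilon}^{2}\ln\tfrac{C\rho}{\varepsilon_{1}}+C$, which together with $E^{+}\ge\tfrac12 m_{1,\bm\varepsilon}^{2}\ln\tfrac1{\varepsilon_{1}}-C$ forces $\rho$ to stay bounded below. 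Second, $\bar{\bm x}_{1}\ne\bar{\bm x}_{2}$: otherwise $\mathrm{dist}(V_{\zeta_{\bm\varepsilon}}^{+},V_{\zeta_{\bm\varepsilon}}^{-})\to0$, so $E^{\mathrm{int}}(\zeta_{\bm\varepsilon})\ge\tfrac12 m_{1,\bm\varepsilon}m_{2,\bm\varepsilon}\ln\tfrac1{d_{\bm\varepsilon}}-C\to+\infty$ (with $d_{\bm\varepsilon}\to0$ the diameter of $V^{+}\cup V^{-}$), contradicting $E^{\mathrm{int}}\le C$. With $\bar{\bm x}_{1},\bar{\bm x}_{2}\in D$ distinct and the cores shrinking, continuity of $h$ near $(\bar{\bm x}_{i},\bar{\bm x}_{i})$ and of $G$ near $(\bar{\bm x}_{1},\bar{\bm x}_{2})$ give $-\int\!\!\int h\,\zeta_{\bm\varepsilon}^{\pm}\zeta_{\bm\varepsilon}^{\pm}=-\kappa_{i}^{2}H(\bar{\bm x}_{i})+o(1)$ and $E^{\mathrm{int}}(\zeta_{\bm\varepsilon})=-\kappa_{1}\kappa_{2}G(\bar{\bm x}_{1},\bar{\bm x}_{2})+o(1)$, hence
\[
M_{\bm\varepsilon}=\tfrac12 m_{1,\bm\varepsilon}^{2}\ln\tfrac1{\varepsilon_{1}}+\tfrac12 m_{2,\bm\varepsilon}^{2}\ln\tfrac1{\varepsilon_{2}}+\gamma_{1}(\zeta_{\bm\varepsilon})+\gamma_{2}(\zeta_{\bm\varepsilon})-2W_{\bm\kappa}(\bar{\bm x}_{1},\bar{\bm x}_{2})+o(1).
\]
Comparing with the lower bound for $M_{\bm\varepsilon}$ and using $\gamma_{i}(\zeta_{\bm\varepsilon})\le\gamma_{i}$ gives $W_{\bm\kappa}(\bar{\bm x}_{1},\bar{\bm x}_{2})\le W_{\bm\kappa}(\bm y_{1},\bm y_{2})+o(1)$; letting $|\bm\varepsilon|\to0$ shows that $(\bar{\bm x}_{1},\bar{\bm x}_{2})$ is itself a global minimum point of $W_{\bm\kappa}$.

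The main obstacle is the Lagrange-multiplier estimate $a_{1}\gtrsim\ln\tfrac1{\varepsilon_{1}}$ (and $a_{2}\lesssim-\ln\tfrac1{\varepsilon_{2}}$): since the positive and negative cores need not be a priori separated, the interaction energy $E^{\mathrm{int}}$ is a priori only $O(\ln\tfrac1{|\bm\varepsilon|})$, and it is precisely the global identity $E=E^{+}+E^{-}-2E^{\mathrm{int}}$ together with the two-sided bound on $M_{\bm\varepsilon}$ that upgrades this to $E^{\mathrm{int}}=O(1)$; this, combined with the Talenti bound on $\int w\,\zeta^{+}$ (the crude bound $\int w\,\zeta^{+}\le E^{+}$ only yields $a_{1}\ge-C$), is what makes the size estimate (i) go through, the remaining steps being a careful but routine adaptation of \cite{T12}.
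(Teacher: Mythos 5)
Your overall architecture is the same as the paper's: a test-function lower bound for $M_{\bm\varepsilon}$, a Riesz-rearrangement upper bound for the self-energies $E^{\pm}$, positivity of the interaction energy to extract simultaneously $E^{\mathrm{int}}\le C$ and the sharp two-sided bounds on $E^{\pm}$ (the paper's Proposition \ref{p33}), then a Lagrange-multiplier estimate, mass concentration, and an energy comparison for the location. Your use of Talenti's comparison theorem in place of the paper's Lemma \ref{le21} (an elementary Sobolev--H\"older bound for $\int|\nabla u|^2$) is a legitimate variant: both deliver $\int\zeta^{+}(\mathcal G\zeta-a_1)^{+}\,d\bm x\le C$ from (H2) and the measure constraint. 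Likewise, comparing against a presupposed global minimizer $(\bm y_1,\bm y_2)$ of $W_{\bm\kappa}$ rather than against arbitrary admissible pairs is fine, provided you justify (as you sketch) that the infimum is attained.

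There is, however, one genuine gap, and it sits exactly at the point the paper singles out as the main new difficulty for sign-changing vorticity. You set $w=(\mathcal G\zeta-a_1)^{+}$ and assert $w\in H_0^1(V_\zeta^{+})$ so that Talenti (or the Dirichlet-energy identity) applies. This requires $a_1\ge 0$: if $a_1<0$, then since $\mathcal G\zeta$ is continuous and vanishes on $\partial D$, the open set $\{\mathcal G\zeta>a_1\}=V_\zeta^{+}$ accumulates on all of $\partial D$ and $w\to-a_1>0$ there, so $w$ is not in $H_0^1$ of its positivity set and neither the symmetrization comparison nor $\int w\zeta^{+}=\int|\nabla w|^2$ is available. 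A priori nothing rules out $a_1<0$ (indeed $a_1\ge a_2$ and $a_2$ is expected to be very negative), so the step as written is unjustified. The paper resolves this by first proving the crude bound $\mu_{\zeta,1}\ge -C$ by a separate contradiction argument (Lemma \ref{bdd0}: if $\mu_{\zeta,2}\to+\infty$ then $D$ minus a small ball would lie in the negative core, violating $\mathfrak m(V_\zeta^{-})=\pi\varepsilon_2^2$), and only then truncating at the nonnegative level $\mu_{\zeta,1}+C_1$ in Proposition \ref{bdd00}. A quicker repair of your argument is to work with $\widetilde w=(\mathcal G\zeta-\max(a_1,0))^{+}$: this lies in $H_0^1(D)$, its positivity set is contained in $V_\zeta^{+}$ (hence has measure at most $\pi\varepsilon_1^2$), the same bound $\int\zeta^{+}\widetilde w\le C$ holds, and $\int\zeta^{+}(\mathcal G\zeta-\max(a_1,0))\le\int\zeta^{+}\widetilde w$ then yields $\max(a_1,0)\,m_{1,\bm\varepsilon}\ge E^{+}-E^{\mathrm{int}}-C$, which forces $a_1=\max(a_1,0)\gtrsim\ln\tfrac1{\varepsilon_1}$ for small $\varepsilon_1$. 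With this repair the rest of your proof of (i) and all of (ii) go through as in the paper.
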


\begin{remark}
Note that the assumption (H2) is made just for some technical requirements in the proofs. It is not clear whether Theorem \ref{thm1} holds if (H2) is weakened or removed.
\end{remark}

If $p\geq 4/3,$ then Theorem \ref{thm1} provides a family of Lyapunov stable solutions to the steady Euler equation such that the vorticity is supported in two separate regions of small diameter, approaching a pair of point vortices 
whose locations are completely determined by the geometry of the domain.

With the help of (H2), we can further study the limiting profile of the maximizers.  
For fixed $v\in\mathcal R_{\bm\varepsilon}$, \emph{let $\rho_{\bm\varepsilon,1}, \rho_{\bm\varepsilon,2}$ be the symmetric-decreasing rearrangement of $v^+, v^-$  with respect to the origin $\bm 0$}, i.e., $\rho_{\bm\varepsilon,1}, \rho_{\bm\varepsilon,2}$ are radially symmetric and  nonincreasing functions such that for any $s\in\mathbb R,$
\begin{equation}\label{repp1}
\mathfrak m(\{\bm x\in\mathbb R^{2}\mid \rho_{\bm\varepsilon,1}(\bm x)>s\})=\mathfrak m(\{\bm x\in D\mid v^{+}(\bm x)>s\}),
\end{equation}
\begin{equation}\label{repp2}
\mathfrak m(\{\bm x\in\mathbb R^{2}\mid \rho_{\bm\varepsilon,2}(\bm x)>s\})=\mathfrak m(\{\bm x\in D\mid v^{-}(\bm x)>s\}).
\end{equation}
Note that $\rho_{\bm\varepsilon,1}, \rho_{\bm\varepsilon,2}$ do not depend on the choice of $v$.
Define
\begin{equation}\label{rhss}
\varrho_{\bm\varepsilon,1}(\bm x)=\varepsilon_{1}^{2}\rho_{\bm\varepsilon,i}(\varepsilon_{1}\bm x),\quad \varrho_{\bm\varepsilon,2}(\bm x)=\varepsilon_{2}^{2}\rho_{\bm\varepsilon,i}(\varepsilon_{2}\bm x).
 \end{equation}
Then up to a  set of zero Lebesgue measure,
\begin{equation}\label{etc1}
\{\bm x\in\mathbb R^2\mid \varrho_{\bm\varepsilon,1}(\bm x)>0\}=\{\bm x\in\mathbb R^2\mid \varrho_{\bm\varepsilon,2}(\bm x)>0\}=B_{1}(\bm 0).
\end{equation}
Moreover,   some simple computations show that
\begin{equation}\label{etc2}
\|\varrho_{\bm\varepsilon,1}\|_{L^{1}(\mathbb R^{2})}=\kappa_{\bm\varepsilon,1}, \quad \|\varrho_{\bm\varepsilon,2}\|_{L^{1}(\mathbb R^{2})}=\kappa_{\bm\varepsilon,2},
\end{equation}
\begin{equation}\label{etc3}
\|\varrho_{\bm\varepsilon,1}\|_{L^{p}(\mathbb R^{2})}=\varepsilon_{1}^{2/p'}\|v^{+}\|_{L^{p}(D)},\quad \|\varrho_{\bm\varepsilon,2}\|_{L^{p}(\mathbb R^{2})}=\varepsilon_{2}^{2/p'}\|v^-\|_{L^{p}(D)} \quad (\forall\,v\in\mathcal R_{\bm\varepsilon}).
\end{equation}
By (H2) and \eqref{etc3}, it holds that 
\begin{equation}\label{ep200}
\limsup_{|\bm\varepsilon|\to 0} \|\varrho_{\bm\varepsilon,1}\|_{L^{p}(\mathbb R^{2})}<+\infty,\quad  \limsup_{|\bm\varepsilon|\to 0} \|\varrho_{\bm\varepsilon,2}\|_{L^{p}(\mathbb R^{2})}<+\infty.
\end{equation}
Hence up to a subsequence, $\varrho_{\bm\varepsilon,i}$  converges weakly to some function $\varrho_{i}$ in $L^p(\mathbb R^2)$ as $|\bm\varepsilon|\to0,$ $i=1,2.$  Note that such $\varrho_{1}, \varrho_{2}$ must be radially symmetric and nonincreasing, which can be easily verified by applying Lemma \ref{bgu} in Section 3.

Now we are ready to state our second result.
 
 \begin{theorem}\label{thm2}
For $\zeta_{\bm\varepsilon}\in\mathcal R_{\bm\varepsilon},$ define
\[\xi_{\bm\varepsilon,1}(\bm x)=\varepsilon_{1}^{2}\zeta_{\bm\varepsilon}^{+}\left(\varepsilon_{1}\bm x+\mathbf X(\zeta_{\bm\varepsilon}^{+})\right),\quad \xi_{\bm\varepsilon,2}(\bm x)=\varepsilon_{2}^{2}\zeta_{\bm\varepsilon}^{+}\left(\varepsilon_{2}\bm x+\mathbf X(\zeta_{\bm\varepsilon}^{-})\right).\]
Let $\varrho_{\bm\varepsilon,1}, \varrho_{\bm\varepsilon,2}$ be defined by \eqref{rhss}. 
Then  the following assertions hold:
\begin{itemize}
\item[(i)] Up to a subsequence, if $\varrho_{\bm\varepsilon,i}$ converges weakly to some $\varrho_{i}$ in $L^{p}(\mathbb R^{2})$, then $\xi_{\bm\varepsilon,i}$ converges weakly to $\varrho_{i}$ in $L^{p}(\mathbb R^{2})$, where $i=1,2.$
\item[(ii)]Up to a subsequence, if $\varrho_{\bm\varepsilon,i}$ converges strongly to some $\varrho_{i}$ in $L^{p}(\mathbb R^{2})$, then $\xi_{\bm\varepsilon,i}$ converges strongly  to $\varrho_{i}$ in $L^{p}(\mathbb R^{2})$, where $i=1,2.$

\end{itemize}
\end{theorem}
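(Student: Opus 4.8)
The plan is to show that the rescaled vortex cores $\xi_{\bm\varepsilon,i}$ have the same limiting profile as the ``model'' rearrangements $\varrho_{\bm\varepsilon,i}$, exploiting the fact that both belong to the \emph{same} rescaled rearrangement class and that $\xi_{\bm\varepsilon,i}$ inherits a monotone dependence on the stream function from Theorem \ref{thm0}(ii). First I would record that, by construction and a change of variables, $\xi_{\bm\varepsilon,1}$ is a rearrangement of $\varrho_{\bm\varepsilon,1}$ on $B_1(\bm 0)$ (up to the null set where the support of $\zeta_{\bm\varepsilon}^+$ might spill slightly, controlled by Theorem \ref{thm1}(i) which gives $\mathrm{diam}(V^+_{\zeta_{\bm\varepsilon}})\le C\varepsilon_1$, so after recentering at $\mathbf X(\zeta_{\bm\varepsilon}^+)$ the support sits inside a fixed ball $B_R(\bm 0)$); in particular $\|\xi_{\bm\varepsilon,1}\|_{L^p(\mathbb R^2)}=\|\varrho_{\bm\varepsilon,1}\|_{L^p(\mathbb R^2)}$ and the same for $i=2$. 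Hence by \eqref{ep200} the family $\{\xi_{\bm\varepsilon,i}\}$ is bounded in $L^p$, so a weak limit $\xi_i$ exists along a subsequence; it is supported in $\overline{B_R(\bm 0)}$ and satisfies $\|\xi_i\|_{L^1}=\lim\kappa_{\bm\varepsilon,i}$.

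The heart of part (i) is to identify $\xi_i=\varrho_i$. Here I would use the monotone relation $\zeta_{\bm\varepsilon}=\phi_{\bm\varepsilon}(\mathcal G\zeta_{\bm\varepsilon})$ from Theorem \ref{thm0}(ii): on the positive core, $\zeta_{\bm\varepsilon}^+$ is a nondecreasing function of $\mathcal G\zeta_{\bm\varepsilon}$, and a standard argument (as in the asymptotic analysis of Turkington) shows that after rescaling, $\mathcal G\zeta_{\bm\varepsilon}$ restricted near $\mathbf X(\zeta_{\bm\varepsilon}^+)$ is, to leading order, a radially symmetric decreasing function of $|\bm x|$ (the self-interaction Newtonian potential dominates, the cross term and the regular part $h$ being lower order on the $\varepsilon_1$-scale). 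Consequently $\xi_{\bm\varepsilon,1}$ is, asymptotically, a radially symmetric nonincreasing function, and by Lemma \ref{bgu} (weak-limit-preserves-symmetrization type lemma) so is its weak limit $\xi_1$. Since $\xi_1$ and $\varrho_1$ are both radially symmetric nonincreasing, have the same $L^1$ mass in the limit, and — crucially — arise as weak limits of functions in a common rearrangement family with matching distribution functions in the limit, I would conclude $\xi_1$ and $\varrho_1$ have the same distribution function; a radially symmetric nonincreasing function is determined by its distribution function, hence $\xi_1=\varrho_1$. The same applies to $i=2$.

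For part (ii), assume $\varrho_{\bm\varepsilon,i}\to\varrho_i$ strongly in $L^p(\mathbb R^2)$. Strong convergence of a rearrangement-valued sequence to $\varrho_i$ forces $\varrho_i$ itself to be a rearrangement-in-the-limit, and in particular $\|\varrho_{\bm\varepsilon,i}\|_{L^p}\to\|\varrho_i\|_{L^p}$. By part (i), $\xi_{\bm\varepsilon,i}\rightharpoonup\varrho_i$ weakly; combined with $\|\xi_{\bm\varepsilon,i}\|_{L^p}=\|\varrho_{\bm\varepsilon,i}\|_{L^p}\to\|\varrho_i\|_{L^p}$, we get convergence of norms along with weak convergence, which in the uniformly convex space $L^p(\mathbb R^2)$ ($1<p<+\infty$) upgrades weak convergence to strong convergence. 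This gives $\xi_{\bm\varepsilon,i}\to\varrho_i$ strongly in $L^p$.

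I expect the main obstacle to be the rigorous justification in part (i) that the rescaled stream function $\mathcal G\zeta_{\bm\varepsilon}$ is asymptotically radially symmetric and decreasing around the core center — i.e., that the cross-interaction with the opposite-sign vortex and the regular part $h(\bm x,\bm y)$ contribute only lower-order, $\bm x$-affine corrections on the $\varepsilon_1$-scale. This requires the size estimate from Theorem \ref{thm1}(i) together with the separation $\bar{\bm x}_1\neq\bar{\bm x}_2$ and uniform bounds on the Lagrange multipliers; one must also be careful that ``asymptotically radially decreasing'' passes to the weak limit, which is exactly where Lemma \ref{bgu} is invoked. A secondary technical point is handling the possibility that $V^+_{\zeta_{\bm\varepsilon}}$ is not exactly a subset of a ball of radius proportional to $\varepsilon_1$ about its own center of mass versus about the stream-function maximum; Theorem \ref{thm1}(i) controls the diameter, so recentering at $\mathbf X(\zeta_{\bm\varepsilon}^+)$ suffices and the discrepancy is absorbed into the null set mentioned above.
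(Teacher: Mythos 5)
Your part (ii) is fine and matches the paper (weak convergence plus convergence of $L^p$ norms plus uniform convexity), but part (i) has a genuine gap: the identification $\xi_i=\varrho_i$ rests on two claims that do not hold as stated. First, you assert that after rescaling, $\mathcal G\zeta_{\bm\varepsilon}$ near $\mathbf X(\zeta_{\bm\varepsilon}^+)$ is to leading order a radially decreasing function of $|\bm x|$ because ``the self-interaction Newtonian potential dominates.'' On the $\varepsilon_1$-scale the self-interaction potential is radially symmetric only if $\zeta_{\bm\varepsilon}^+$ itself is (asymptotically) radially symmetric about the center --- which is exactly what you are trying to prove; the cross term and $h$ being lower order only controls the potential at distances large compared to $\varepsilon_1$, not the shape of the level sets inside the core. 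Second, you claim that $\xi_1$ and $\varrho_1$ ``arise as weak limits of functions in a common rearrangement family with matching distribution functions in the limit,'' hence have the same distribution function. Weak $L^p$ limits do not preserve distribution functions (oscillating rearrangements of a fixed function can converge weakly to something with a completely different distribution), so this step fails. Lemma~\ref{bgu} is not a ``weak-limit-preserves-symmetrization'' lemma: it gives an \emph{inequality} between the logarithmic energies of the two weak limits, and identifies $v=u=u^*$ only in the \emph{equality} case. You never establish that equality.

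The missing ingredient is the energy comparison that the paper uses to force this equality. One compares $E(\zeta_n)$ with $E(v_n)$, where $v_n=v_{n,1}-v_{n,2}$ is built from the symmetric decreasing profiles $\rho_{\bm\varepsilon_n,1},\rho_{\bm\varepsilon_n,2}$ recentered at a global minimum point $(\bar{\bm x}_1,\bar{\bm x}_2)$ of $W_{\bm\kappa}$. Using Propositions~\ref{prp45} and \ref{llvv}, all cross-interaction and $h$-terms in both expansions converge to $-W_{\bm\kappa}(\bar{\bm x}_1,\bar{\bm x}_2)$, so $E(\zeta_n)\ge E(v_n)$ reduces to
\begin{equation*}
\sum_{i=1,2}\int\!\!\int \ln\tfrac{1}{|\bm x-\bm y|}\,\varrho_{n,i}(\bm x)\varrho_{n,i}(\bm y)\,d\bm x\,d\bm y
\le \sum_{i=1,2}\int\!\!\int \ln\tfrac{1}{|\bm x-\bm y|}\,\xi_{n,i}(\bm x)\xi_{n,i}(\bm y)\,d\bm x\,d\bm y+\gamma_n,
\end{equation*}
with $\gamma_n\to0$, while the Riesz rearrangement inequality (Lemma~\ref{rri2}) gives the reverse inequality termwise. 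Passing to the limit yields equality of the limiting logarithmic energies for each $i$, and only then does Lemma~\ref{bgu} (whose hypotheses hold by the zero-center-of-mass and uniform-support properties of $\xi_{n,i}$) give $\xi_i=\varrho_i$. Without this energy pincer your argument cannot rule out that the rescaled cores converge weakly to something other than $\varrho_i$.
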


\section{Preliminaries}

The following lemma will used in Lemmas \ref{bdd0} and \ref{bdd00} in Section 4.
\begin{lemma}\label{le21}
Let $\Omega\subset \mathbb R^{2}$ be a smooth bounded domain and $1<p<+\infty$. Suppose $u\in W^{2,p}\cap W^{1,p}_{0}(\Omega)$ is nonnegative. Then there exists some $C>0$, depending only on $p$ and $\Omega$, such that 
\[\|\nabla u\|_{L^{2}(\Omega)}\leq C\|\Delta u\|_{L^{p}(\Omega)}\mathfrak m^{1/p'}\left(\{\bm x\in\Omega\mid u(\bm x)>0\}\right).\]
\end{lemma}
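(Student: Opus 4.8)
The plan is to reduce everything to a Gagliardo--Nirenberg--Sobolev inequality on the level set $U:=\{\bm x\in\Omega\mid u(\bm x)>0\}$, exploiting that $u$ vanishes on $\partial U$. First I would record the chain
\[
\|\nabla u\|_{L^2(\Omega)}^2=\int_\Omega|\nabla u|^2\,d\bm x=-\int_\Omega u\,\Delta u\,d\bm x=-\int_{U} u\,\Delta u\,d\bm x,
\]
which uses $u\in W^{1,p}_0(\Omega)$ for the integration by parts and the fact that $\nabla u=0$ a.e.\ on $\{u=0\}$, so the gradient is supported in $U$. Applying H\"older's inequality with exponents $(p,p')$ on the right-hand side gives
\[
\|\nabla u\|_{L^2(\Omega)}^2\le\|\Delta u\|_{L^p(\Omega)}\,\|u\|_{L^{p'}(U)}.
\]

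Next I would control $\|u\|_{L^{p'}(U)}$ by $\|\nabla u\|_{L^2}$ times a power of $\mathfrak m(U)$. Here the key tool is the two-dimensional Sobolev embedding: for $w\in W^{1,1}_0(U)$ one has $w\in L^2$ with $\|w\|_{L^2(U)}\le C\|\nabla w\|_{L^1(U)}$ (the critical $W^{1,1}\hookrightarrow L^2$ embedding in dimension two), and more generally $\|w\|_{L^q(U)}\le C_q\|\nabla w\|_{L^2(U)}$ for every finite $q$, with $C_q$ depending on $q$ but \emph{not} on $U$ — this independence is exactly what the dimension-two borderline Sobolev inequality provides (one can either invoke the sharp Moser--Trudinger-type estimate, or simply note $W^{1,2}_0(U)\hookrightarrow L^q$ with a universal constant because the embedding constant for $W^{1,1}_0\hookrightarrow L^2$ is universal and then interpolate). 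Since $u\in W^{1,2}_0(U)$ (its trace on $\partial U$ vanishes because $u$ is continuous by Sobolev embedding $W^{2,p}\hookrightarrow C^0$ and equals $0$ there), I get $\|u\|_{L^{p'}(U)}\le C\|\nabla u\|_{L^2(U)}$. To bring in the measure factor, I instead estimate $\|u\|_{L^{p'}(U)}\le\mathfrak m(U)^{1/p'-1/q}\|u\|_{L^q(U)}$ for a fixed large $q$, say $q=2p'$, and then $\|u\|_{L^{q}(U)}\le C\|\nabla u\|_{L^2(U)}=C\|\nabla u\|_{L^2(\Omega)}$; choosing $q=2p'$ makes $1/p'-1/q=1/(2p')$, which is positive, so
\[
\|u\|_{L^{p'}(U)}\le C\,\mathfrak m(U)^{1/(2p')}\|\nabla u\|_{L^2(\Omega)}.
\]
Wait — I need the exponent $1/p'$, not $1/(2p')$, so I would instead push the measure out of $\|\nabla u\|_{L^2}$ as well: apply H\"older once more to get $\|\nabla u\|_{L^2(U)}\le\mathfrak m(U)^{1/2-1/r}\|\nabla u\|_{L^r(U)}$ is the wrong direction; the cleaner route is to use the scale-invariant inequality $\|u\|_{L^{p'}(U)}\le C\mathfrak m(U)^{\alpha}\|\nabla u\|_{L^2(U)}$ where dimensional analysis (lengths: $[u]$ cancels, $[d\bm x]^{1/p'-1/2}=[\mathrm{length}]^{2/p'-1}$, $[\nabla]=[\mathrm{length}]^{-1}$ so $[\nabla u]_{L^2}=[\mathrm{length}]^{0}$ after the $[u]$ cancels — actually $u$ has its own scaling) fixes $\alpha=1/p'$ once one also absorbs a gradient-measure factor. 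Concretely: $\|u\|_{L^{p'}(U)}\le C\mathfrak m(U)^{1/p'}\,\big(\mathfrak m(U)^{-1}\!\int_U|\nabla u|^2\big)^{1/2}=C\mathfrak m(U)^{1/p'-1/2}\|\nabla u\|_{L^2(U)}$, which is the Sobolev--Poincar\'e inequality on $U$ with a constant independent of the shape of $U$ (valid in $2$D for the conformal exponent). Substituting into the H\"older bound above yields
\[
\|\nabla u\|_{L^2(\Omega)}^2\le C\,\|\Delta u\|_{L^p(\Omega)}\,\mathfrak m(U)^{1/p'-1/2}\,\|\nabla u\|_{L^2(\Omega)},
\]
and if $1/p'-1/2<0$ this is useless, so the final adjustment is to split the exponent: use $\|u\|_{L^{p'}(U)}\le\mathfrak m(U)^{1/p'-1/s}\|u\|_{L^s(U)}$ with $s$ finite, and $\|u\|_{L^s(U)}\le C\|u\|_{W^{1,2}_0(U)}\le C(\mathrm{diam}\,\Omega)\|\nabla u\|_{L^2}$, keeping the measure factor exactly at power $1/p'$ by picking $s$ with $1/s=0^+$, i.e.\ sending $s\to\infty$ — legitimate in $2$D since $W^{1,2}_0\hookrightarrow L^s$ for all finite $s$ with constant $\le C\sqrt{s}$, and $\mathfrak m(U)^{1/p'-1/s}\to\mathfrak m(U)^{1/p'}$ while $C\sqrt s\,\mathfrak m(U)^{1/s}$ can be optimized or just bounded using $\mathfrak m(U)\le\mathfrak m(\Omega)$. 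Dividing through by $\|\nabla u\|_{L^2(\Omega)}$ (assuming it is nonzero; otherwise the inequality is trivial) gives the claim.

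The main obstacle is getting the measure of the level set to appear with the precise power $1/p'$ and with a constant that depends only on $p$ and $\Omega$ (not on $u$ or on the set $U$). In dimension two this is delicate because the scale-invariant Sobolev exponent is $+\infty$, so there is no single inequality $\|u\|_{L^q}\le C|U|^{1/p'}\|\nabla u\|_{L^2}$ with finite $q$; one must either interpolate carefully between $L^s$ for large finite $s$ (tracking how the Sobolev constant $C_s\sim\sqrt s$ and the measure power $|U|^{1/s}$ compete as $s\to\infty$, where $|U|^{1/s}\to1$ wins), or — more robustly — run the argument through the $W^{1,1}_0(U)\hookrightarrow L^2(U)$ embedding, whose constant $C$ is genuinely universal (independent even of $U$), combined with H\"older $\|\nabla u\|_{L^1(U)}\le|U|^{1/p'}\|\nabla u\|_{L^p(U)}$ and then bootstrapping $\|\nabla u\|_{L^p}$ — but here one would want to compare with $\|\nabla u\|_{L^2}$, which needs $p\le2$ or another interpolation. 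I expect the slickest proof uses: $\|u\|_{L^2(U)}\le C\|\nabla u\|_{L^1(U)}$ (universal $2$D isoperimetric/Sobolev), then H\"older $\|\nabla u\|_{L^1(U)}\le|U|^{1/2}\|\nabla u\|_{L^2(U)}$, giving $\|u\|_{L^2(U)}\le C|U|^{1/2}\|\nabla u\|_{L^2}$; iterate/interpolate this with the trivial $\|u\|_{L^\infty}\le C(\Omega)\|\Delta u\|_{L^p}$ (from $W^{2,p}\hookrightarrow C^0$, $p>1$... careful, needs $p>1$ in $2$D which holds) to get $\|u\|_{L^{p'}(U)}$ at the right power, and only at the very end invoke $\|\Delta u\|_{L^p}$ once via H\"older in the first displayed identity. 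Handling the borderline exponents and confirming the final constant depends only on $(p,\Omega)$ is where the real care goes.
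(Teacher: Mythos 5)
Your opening move is identical to the paper's: integrate by parts and apply H\"older to get $\|\nabla u\|_{L^2(\Omega)}^2\le\|\Delta u\|_{L^p(\Omega)}\|u\|_{L^{p'}(U)}$ with $U:=\{u>0\}$, so that everything reduces to the inequality $\|u\|_{L^{p'}(U)}\le C\,\mathfrak m(U)^{1/p'}\|\nabla u\|_{L^2(\Omega)}$. Your final sketch also settles the case $p\ge 2$ (so $p'\le 2$) essentially as the paper does: $\|u\|_{L^{p'}}\le\mathfrak m(U)^{1/p'-1/2}\|u\|_{L^2}$ by H\"older, and $\|u\|_{L^2}\le C\|\nabla u\|_{L^1}\le C\,\mathfrak m(U)^{1/2}\|\nabla u\|_{L^2}$ via the universal two-dimensional embedding $W^{1,1}_0\hookrightarrow L^2$.

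The genuine gap is the complementary case $1<p<2$, i.e.\ $p'>2$, where none of the routes you float actually produces the power $\mathfrak m(U)^{1/p'}$. The ``shape-independent Sobolev--Poincar\'e inequality'' $\|u\|_{L^{p'}(U)}\le C\,\mathfrak m(U)^{1/p'-1/2}\|\nabla u\|_{L^2}$ has a negative exponent for $p'>2$ and, as you yourself note, is useless. Optimizing $\|u\|_{L^{p'}}\le\mathfrak m(U)^{1/p'-1/s}\|u\|_{L^s}$ against $\|u\|_{L^s}\le C\sqrt{s}\,\|\nabla u\|_{L^2}$ over $s$ (the optimum is $s\sim\ln(1/\mathfrak m(U))$) leaves an unavoidable factor $\sqrt{\ln(1/\mathfrak m(U))}$, so you only get the lemma with a logarithmic loss; no fixed finite $s$ gives the exponent $1/p'$ exactly. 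And interpolating $\|u\|_{L^{p'}}\le\|u\|_{L^2}^{2/p'}\|u\|_{L^\infty}^{1-2/p'}$ with $\|u\|_{L^\infty}\le C\|\Delta u\|_{L^p}$ yields, after dividing through, the exponent $\mathfrak m(U)^{1/(2(p'-1))}=\mathfrak m(U)^{(p-1)/2}$, which is strictly smaller than $1/p'=(p-1)/p$ when $p<2$ and hence a strictly weaker estimate (in the application $\mathfrak m(U)=\pi\varepsilon^2\to0$, so the power is exactly what matters). The idea you are missing --- and which the paper uses --- is to place the H\"older factor on the \emph{gradient} rather than on $u$: since $\nabla u=0$ a.e.\ on $\{u=0\}$ (a fact you already recorded), one has $\|\nabla u\|_{L^r(\Omega)}\le\mathfrak m(U)^{1/p'}\|\nabla u\|_{L^2(\Omega)}$ for $r:=2p'/(p'+2)\in[1,2)$, and the \emph{subcritical} Gagliardo--Nirenberg--Sobolev embedding $W^{1,r}_0\hookrightarrow L^{p'}$ (whose constant depends only on $r$, hence only on $p$) then gives $\|u\|_{L^{p'}}\le C\|\nabla u\|_{L^r}\le C\,\mathfrak m(U)^{1/p'}\|\nabla u\|_{L^2}$. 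You brushed against this when you wrote the H\"older step for $\nabla u$ but dismissed it as ``the wrong direction''.
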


\begin{proof}
For simplicity, denote 
\[f:=-\Delta u\in L^{p}(\Omega),\quad U:= \{\bm x\in\Omega\mid u(\bm x)>0\}.\] By integration by parts and H\"older's inequality,
\begin{equation}\label{e201}
\int_{\Omega}|\nabla u|^{2}dx=\int_{D}uf dx\leq \|u\|_{L^{p'}(\Omega)}\|f\|_{L^{p}(\Omega)}.
\end{equation}
To complete the proof, it suffices to show that there exists some   $C>0$, depending only on $p$ and $\Omega$, such that 
\begin{equation}\label{e202}
  \|u\|_{L^{p'}(\Omega)}\leq C\|\nabla u\|_{L^{2}(\Omega)}\mathfrak m^{1/p'}\left(U\right).
\end{equation}
To prove \eqref{e202}, we distinguish two cases:
\begin{itemize}
\item[(i)]
The case $1<p\leq 2.$  In this case, 
\[1\leq r<2,\quad r:= \frac{2p'}{p'+2}.\]
Hence by Sobolev embedding  and H\"older's inequality we have that
 \begin{equation}\label{wzz1}
  \|u\|_{L^{p'}(\Omega)}\leq C \|\nabla u\|_{L^{r}(\Omega)}\leq C\|\bm 1_{U}\|_{L^{p'}(\Omega)} \|\nabla u\|_{L^{2}(\Omega)}=C \|\nabla u\|_{L^{2}(\Omega)}\mathfrak m^{1/p'}(U).
 \end{equation} 
 Note that in the first inequality of \eqref{wzz1} we have used  the Sobolev embedding $W^{1,r}_{0}(\Omega)\hookrightarrow L^{p'}(\Omega),$  and in the second inequality we have used H\"older's inequality and the fact that $u\geq 0$ in $\Omega$.  Hence \eqref{e202} has been proved.
 
 \item[(ii)] The case $2<p<+\infty.$ In this case,  
 \[2<s<+\infty,\quad s:=\frac{2p'}{2-p'}.\]
By H\"older's inequality we have  that
 \begin{equation}\label{e204}
 \|u\|_{L^{p'}(\Omega)}\leq \|u\|_{L^{2}(\Omega)}\|\bm 1_{U}\|_{L^{s}(\Omega)}=\|u\|_{L^{2}(\Omega)}\mathfrak m^{1/s}(U). 
 \end{equation}
 In view of the Sobolev embedding $W^{1,1}_{0}(\Omega)\hookrightarrow L^{2}(\Omega),$  $\|u\|_{L^{2}(\Omega)}$ can be estimated as follows:
 \begin{equation}\label{e205}
 \|u\|_{L^{2}(\Omega)} \leq  C\|\nabla u\|_{L^{1}(\Omega)} \leq C\|\nabla u\|_{L^{2}(\Omega)}\|\bm 1_{U}\|_{L^{2}(\Omega)}=C\|\nabla u\|_{L^{2}(\Omega)}\mathfrak m^{1/2}(U). 
 \end{equation}
The desired estimate \eqref{e202} follows from \eqref{e204} and \eqref{e205} immediately.
\end{itemize}
\end{proof}

For any Lebesgue measurable function $u:\mathbb R^2\to\mathbb R$, we use $u^*$ to denote its symmetric-decreasing rearrangement with respect to the origin. See \cite{LL}, \S 3.3 for the precise definition.  The following two rearrangement inequalities will be frequently used in later proofs.

\begin{lemma}[\cite{LL}, \S 3.4]\label{rri1}
Let $u,v$ be nonnegative Lebesgue measurable functions on $\mathbb R^2$. Then
\[\int_{\mathbb R^2}uvdx\leq \int_{\mathbb R^2}u^*v^*dx.\]
\end{lemma}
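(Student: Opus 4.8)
The plan is to reduce the inequality to an elementary comparison of measures of super-level sets by means of the layer-cake (bathtub) representation. Since $u,v\ge0$ are measurable, I would write $u(\bm x)=\int_{0}^{\infty}\bm 1_{\{u>t\}}(\bm x)\,dt$ and $v(\bm x)=\int_{0}^{\infty}\bm 1_{\{v>s\}}(\bm x)\,ds$, multiply, and apply Tonelli's theorem (all integrands being nonnegative) to obtain
\[\int_{\mathbb R^{2}}uv\,d\bm x=\int_{0}^{\infty}\!\!\int_{0}^{\infty}\mathfrak m\bigl(\{u>t\}\cap\{v>s\}\bigr)\,dt\,ds,\]
together with the identical formula for $\int_{\mathbb R^{2}}u^{*}v^{*}\,d\bm x$ with $u,v$ replaced by $u^{*},v^{*}$. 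Thus it suffices to prove, for every $s,t>0$, the pointwise estimate
\[\mathfrak m\bigl(\{u>t\}\cap\{v>s\}\bigr)\le \mathfrak m\bigl(\{u^{*}>t\}\cap\{v^{*}>s\}\bigr).\]

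For the left-hand side I would use only the trivial bound $\mathfrak m(A\cap B)\le\min\{\mathfrak m(A),\mathfrak m(B)\}$, giving
\[\mathfrak m\bigl(\{u>t\}\cap\{v>s\}\bigr)\le\min\bigl\{\mathfrak m(\{u>t\}),\,\mathfrak m(\{v>s\})\bigr\}.\]
For the right-hand side the point is that $\{u^{*}>t\}$ and $\{v^{*}>s\}$ are, by the very definition of the symmetric-decreasing rearrangement in \cite{LL}, \S 3.3, open balls centered at the origin (or the empty set); hence one of the two is contained in the other, their intersection is the smaller ball, and therefore
\[\mathfrak m\bigl(\{u^{*}>t\}\cap\{v^{*}>s\}\bigr)=\min\bigl\{\mathfrak m(\{u^{*}>t\}),\,\mathfrak m(\{v^{*}>s\})\bigr\}.\]
Finally, equimeasurability of $u$ with $u^{*}$ and of $v$ with $v^{*}$ — again part of the definition of rearrangement, valid at every continuity point of the distribution function and hence for a.e. level — identifies the right-hand sides of the last two displays, which yields the required pointwise estimate. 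Integrating it back over $(s,t)\in(0,\infty)^{2}$ completes the proof.

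The argument has no genuine obstacle; the only care needed is measure-theoretic bookkeeping. One should confirm the layer-cake identity and the use of Tonelli are legitimate, which they are because everything in sight is nonnegative and measurable and the double integral is allowed to equal $+\infty$ (in which case the inequality is vacuous). One should also keep in mind the harmless ambiguity between $\{u^{*}>t\}$ and $\{u^{*}\ge t\}$: the conventional choice makes these open balls, but either choice gives the same measure on a dense set of levels, so the conclusion is unaffected. If one prefers, the lemma is also the degenerate case of the Riesz rearrangement inequality obtained by letting one of its three functions concentrate to a point mass, but the self-contained layer-cake argument above is the cleanest route, and the key realization is simply that nested sets turn the inequality $\mathfrak m(A\cap B)\le\min\{\mathfrak m(A),\mathfrak m(B)\}$ into an equality.
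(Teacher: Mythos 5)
Your proof is correct. The paper does not prove this lemma itself --- it simply quotes it from Lieb--Loss, \S 3.4 --- and your layer-cake argument (reduce to $\mathfrak m(\{u>t\}\cap\{v>s\})\le\min\{\mathfrak m(\{u>t\}),\mathfrak m(\{v>s\})\}$, then use that the super-level sets of $u^*$ and $v^*$ are nested balls so the inequality becomes an equality after rearrangement) is precisely the standard proof given in that reference, so there is nothing to add.
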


\begin{lemma}[\cite{LL}, \S 3.7]\label{rri2}
Let $u,v,w$ be nonnegative Lebesgue measurable functions on $\mathbb R^2$. Then
\[\int_{\mathbb R^2}\int_{\mathbb R^2}u(x)v(x-y)w(y)dxdy\leq \int_{\mathbb R^2}\int_{\mathbb R^2}u^*(x)v^*(x-y)w^*(y)dxdy.\]
\end{lemma}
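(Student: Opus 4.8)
The plan is to prove the inequality by the method of Steiner symmetrization (the Brascamp--Lieb--Luttinger approach), in three stages: reduce the three nonnegative functions to characteristic functions of finite-measure sets; establish the set version via a single Steiner symmetrization resting on the one-dimensional Riesz inequality; and then pass to the full symmetric-decreasing rearrangement by iterating symmetrizations and taking a limit. Throughout, for measurable $A,B,C\subset\mathbb R^2$ of finite measure write
\[I(A,B,C):=\int_{\mathbb R^2}\int_{\mathbb R^2}\bm 1_A(\bm x)\,\bm 1_B(\bm x-\bm y)\,\bm 1_C(\bm y)\,d\bm x\,d\bm y,\]
and let $A^*$ denote the open centered ball with $\mathfrak m(A^*)=\mathfrak m(A)$.

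First I would reduce to indicator functions. By the layer-cake formula $u(\bm x)=\int_0^\infty\bm 1_{\{u>s\}}(\bm x)\,ds$, and similarly for $v,w$, Tonelli's theorem applied to the nonnegative integrand expresses the left-hand side of the lemma as
\[\int_0^\infty\!\!\int_0^\infty\!\!\int_0^\infty I\big(\{u>s\},\{v>t\},\{w>r\}\big)\,ds\,dt\,dr.\]
Since symmetric-decreasing rearrangement acts on level sets, one has $\{u^*>s\}=\{u>s\}^*$, so the right-hand side of the lemma admits the identical representation with each set replaced by its rearrangement. Hence it suffices to prove $I(A,B,C)\le I(A^*,B^*,C^*)$ for all measurable sets $A,B,C$ of finite measure.

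The core step is a single Steiner symmetrization. For the direction $\bm e=(1,0)$, let $S_{\bm e}A$ be obtained by replacing, for each fixed second coordinate $x_2$, the slice $A_{x_2}:=\{x_1\in\mathbb R\mid(x_1,x_2)\in A\}$ by the centered interval of equal length; this preserves $\mathfrak m$ and fixes every centered ball. Writing $\bm x=(x_1,x_2)$, $\bm y=(y_1,y_2)$, the functional factors as
\[I(A,B,C)=\int_{\mathbb R}\int_{\mathbb R}\Big[\int_{\mathbb R}\int_{\mathbb R}\bm 1_{A_{x_2}}(x_1)\,\bm 1_{B_{x_2-y_2}}(x_1-y_1)\,\bm 1_{C_{y_2}}(y_1)\,dx_1\,dy_1\Big]\,dx_2\,dy_2.\]
For fixed transverse variables the inner integral is the one-dimensional functional applied to the slices, and the one-dimensional Riesz inequality bounds it by the same expression with each slice replaced by its centered interval; since the centered interval of $A_{x_2}$ is exactly the slice of $S_{\bm e}A$ over $x_2$, integrating in $x_2,y_2$ yields $I(A,B,C)\le I(S_{\bm e}A,S_{\bm e}B,S_{\bm e}C)$, and the same holds in every direction by rotating coordinates. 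The one-dimensional case is the elementary base of the argument: among triples of sets of prescribed measures the overlap functional is maximized by centered intervals, which I would prove by approximating by finite unions of intervals and sliding their mass toward the origin.

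Finally I would iterate. Fixing a suitable sequence of directions $\bm e_1,\bm e_2,\dots$ and setting $A_k:=S_{\bm e_k}\cdots S_{\bm e_1}A$ (and likewise $B_k,C_k$), the monotonicity step gives $I(A,B,C)\le I(A_k,B_k,C_k)$ for every $k$. The functional $I$ is continuous under $L^1$ convergence of indicators: from $\int_{\mathbb R^2}\bm 1_B(\bm x-\bm y)\bm 1_C(\bm y)\,d\bm y\le\mathfrak m(C)$ one gets $|I(A,B,C)-I(A',B,C)|\le\mathfrak m(A\triangle A')\,\mathfrak m(C)$, and similarly in the other two slots. Thus it remains to choose the directions so that $\mathfrak m(A_k\triangle A^*)\to0$, after which passing to the limit gives the claim. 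The main obstacle is precisely this convergence: a single direction stabilizes after one step, so the directions must genuinely vary, and the limit of the resulting sequence must be controlled. This is a classical but delicate fact, which I would establish by a compactness argument based on the monotone quantity $\int_A|\bm x|^2\,d\bm x$: Steiner symmetrization never increases this second moment, the centered ball is its unique minimizer among sets of fixed measure, and by choosing the directions to drive the moment down to its infimum one forces any subsequential limit of the $A_k$ to coincide with $A^*$.
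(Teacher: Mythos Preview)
The paper does not prove this lemma at all; it simply cites \S 3.7 of Lieb--Loss, \emph{Analysis}, as the source. Your outline is essentially the proof given there: layer-cake reduction to characteristic functions, the one-dimensional Riesz inequality as the base case, a single Steiner symmetrization step via slicing, and iteration of Steiner symmetrizations toward the centered ball. So your approach and the paper's cited reference coincide.

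One small caveat on the final step: the second-moment argument you propose for the convergence $\mathfrak m(A_k\triangle A^*)\to 0$ tacitly assumes $\int_A|\bm x|^2\,d\bm x<\infty$, which need not hold for an arbitrary finite-measure set. This is not a real obstacle --- your own $L^1$-continuity estimate for $I$ lets you first approximate $A,B,C$ by bounded sets --- but it should be said. Lieb--Loss handle the convergence somewhat differently, via a specific sequence of rotation angles and a compactness/selection argument, and this is indeed the most delicate part of the whole proof; your sketch correctly flags it as such.
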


The following lemma is a special case of Lemma 3.2 in \cite{BGu}, and will be used in the proof of Theorem \ref{thm2}.

\begin{lemma}[\cite{BGu}, Lemma 3.2]\label{bgu}
Let $1<p<+\infty$. Suppose $\{u_n\}\subset L^p(\mathbb R^2)$ satisfies for each $n$,
\begin{equation}\label{lily}
 u_n(\bm x)\geq 0 \,\, \mbox{ \rm a.e. }\bm x\in\mathbb R^2,\quad\int_{\mathbb R^2}\bm x u_n(\bm x)d\bm x=\mathbf 0,\quad {\rm supp}(u_n)\subset B_{r}(\bm 0) 
 \end{equation}
for some $r>0$. If $u_n\rightharpoonup u$ and $u^*_n\rightharpoonup v$ for some $u,v\in L^p(\mathbb R^2),$ then 
\[\int_{\mathbb R^2}\int_{\mathbb R^2}\ln\frac{1}{|\bm x-\bm y|}u(\bm x)u(\bm y)d\bm xd \bm y\leq \int_{\mathbb R^2}\int_{\mathbb R^2}\ln\frac{1}{|\bm  x-\bm  y|}v(\bm x)v(\bm y)d\bm  xd \bm y,\]
and  the equality implies $v=u=u^*$.
\end{lemma}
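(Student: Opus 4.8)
The plan is to use the common compact support to convert $\ln\frac{1}{|\cdot|}$ into a genuine symmetric-decreasing kernel, apply the Riesz inequality (Lemma \ref{rri2}) at the level of each $u_n$, and then transfer the inequality and its equality case to the weak limits. Since every $u_n$ is supported in $\overline{B_r(\bm 0)}$, for $\bm x,\bm y$ in the support we have $|\bm x-\bm y|\le 2r$, so setting
\[ k(\bm z):=\Big(\ln\frac{2r}{|\bm z|}\Big)_+\ge 0,\qquad k=k^*, \]
we have $k(\bm x-\bm y)=\ln\frac{2r}{|\bm x-\bm y|}$ there, and hence for every $f\ge 0$ supported in $\overline{B_r}$,
\[ I(f):=\int_{\mathbb R^2}\!\int_{\mathbb R^2}\ln\frac1{|\bm x-\bm y|}f(\bm x)f(\bm y)\,d\bm xd\bm y=\int\!\int k(\bm x-\bm y)f(\bm x)f(\bm y)\,d\bm xd\bm y-\ln(2r)\Big(\int f\Big)^2. \]
Applying Lemma \ref{rri2} to the triple $(u_n,k,u_n)$ and using $k=k^*$ together with $\int u_n=\int u_n^*$ gives $I(u_n)\le I(u_n^*)$ for every $n$ (both $u_n$ and $u_n^*$ are supported in $\overline{B_r}$). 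It then remains to pass this inequality, and its equality case, to the weak limits.

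Next I would prove that $I$ is weakly sequentially continuous on $X_r:=\{f\in L^p:\operatorname{supp}f\subseteq\overline{B_r}\}$. The quadratic term is $\langle f,\mathcal Lf\rangle$ with $\mathcal Lf(\bm x)=\int_{\overline{B_r}}k(\bm x-\bm y)f(\bm y)\,d\bm y$; splitting $k$ into its part on $\{|\bm z|\ge\delta\}$ (a bounded kernel, giving an operator that is compact into $C(\overline{B_r})$) and its part on $\{|\bm z|<\delta\}$ (whose $L^p\to L^\infty$ norm is $\le\|k\mathbf 1_{B_\delta}\|_{L^{p'}}\to0$, since $k\in L^{p'}(B_{2r})$), one sees that $\mathcal L:L^p(\overline{B_r})\to L^\infty(\overline{B_r})$ is compact. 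A compact self-adjoint operator makes $f\mapsto\langle f,\mathcal Lf\rangle$ weakly sequentially continuous, and the mass term $(\int f)^2$ is weakly continuous on $X_r$ because $\mathbf 1_{B_r}\in L^{p'}$. Passing $I(u_n)\le I(u_n^*)$ to the limit yields $I(u)\le I(v)$. Along the way I record that $u,v\ge0$ lie in $X_r$ (nonnegativity and support are preserved under weak limits), that $\int u=\int v$ and $\int\bm x\,u=\bm 0$ (test against $\mathbf 1_{B_r}$ and $\bm x\,\mathbf 1_{B_r}$), and that $v=v^*$ (the cone of symmetric-decreasing functions is convex and norm-closed, hence weakly closed).

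For the equality case I would interpolate the symmetric rearrangement $u^*$ of the limit $u$ (which lies in $X_r$) and establish the chain $I(u)\le I(u^*)\le I(v)$. The right inequality comes from an explicit radial identity: using the two-dimensional Newton formula $\frac1{2\pi}\int_{|\bm y|=s}\ln|\bm x-\bm y|=\ln\max(|\bm x|,s)$ one computes, for any radial $g\ge0$ supported in $\overline{B_r}$ with cumulative mass $m_g(\tau):=\int_{B_\tau}g$,
\[ I(g)=\ln\frac1r\,m_g(r)^2+\int_0^r\frac{m_g(\tau)^2}{\tau}\,d\tau. \]
A standard property of the rearrangement gives $m_{u^*}(\tau)\le m_v(\tau)$ for all $\tau$ (indeed $\int_{B_\tau}u_n^*\ge\int_E u_n$ whenever $\mathfrak m(E)\le\pi\tau^2$, so letting $n\to\infty$ and taking the supremum over such $E$ yields $m_v(\tau)\ge m_{u^*}(\tau)$), with equality at $\tau=r$ by equality of total masses. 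The identity then gives $I(u^*)\le I(v)$, and equality forces $m_{u^*}\equiv m_v$, i.e.\ $u^*=v$. The left inequality $I(u)\le I(u^*)$ is again Lemma \ref{rri2}; here $I(u)=I(u^*)$ is the binding constraint, and by the equality characterization of the Riesz rearrangement inequality for the strictly decreasing kernel $k$, $u$ must be a translate of $u^*$, whence the normalization $\int\bm x\,u=\bm 0$ forces the translation to be trivial, so $u=u^*$. Combining the two halves gives $u=u^*=v$.

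The main obstacle is precisely this final rigidity. The majorization half is handled cleanly by the radial identity, but concluding $u=u^*$ genuinely requires the sharp (equality) form of the Riesz rearrangement inequality rather than the inequality in Lemma \ref{rri2}, and it is exactly here that the hypothesis $\int\bm x\,u_n=\bm 0$ is indispensable: without it $u$ could be any translate of $u^*$. A secondary technical point is the compactness of the logarithmic-potential operator underpinning the weak continuity of $I$; this is routine but must be carried out uniformly on the fixed support $\overline{B_r}$, which is what makes the quadratic form pass to weak limits.
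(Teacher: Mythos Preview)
The paper does not prove this lemma at all: it is stated as ``a special case of Lemma~3.2 in \cite{BGu}'' and simply quoted. So there is no in-paper proof to compare against; your proposal is a self-contained argument for a result the authors import wholesale.

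On its own merits your outline is correct and is essentially the Burchard--Guo argument you are reconstructing. The reduction to the nonnegative, strictly symmetric-decreasing kernel $k(\bm z)=(\ln(2r/|\bm z|))_+$ is right; the compactness of the convolution operator $\mathcal L:L^p(\overline{B_r})\to L^\infty(\overline{B_r})$ (approximate $k$ in $L^{p'}$ by smooth compactly supported kernels, use Arzel\`a--Ascoli) gives weak sequential continuity of $I$ and hence the inequality $I(u)\le I(v)$; your radial identity $I(g)=\ln(1/r)\,m_g(r)^2+\int_0^r m_g(\tau)^2/\tau\,d\tau$ is correct (integrate by parts using $\psi'(\tau)=m_g(\tau)/\tau$) and, together with the majorization $m_{u^*}\le m_v$ and $m_{u^*}(r)=m_v(r)$, yields $I(u^*)\le I(v)$ with equality iff $u^*=v$. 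The only ingredient you invoke that is genuinely beyond Lemmas~\ref{rri1}--\ref{rri2} is the \emph{equality case} of the Riesz rearrangement inequality for a strictly decreasing kernel, needed to pass from $I(u)=I(u^*)$ to ``$u$ is a translate of $u^*$'' (and then $\int\bm x\,u=\bm 0$ kills the translation). You correctly flag this as the crux; it is a nontrivial theorem (Lieb/Burchard) and is precisely what Burchard--Guo themselves rely on, so your proposal is faithful to the source the paper cites.
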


\section{Proof of Theorem \ref{thm1}}
 
 In this section, we give the proof of Theorem \ref{thm1}. The proof is based on the energy method established by Turkington \cite{T12}, which consists of three steps: basic energy estimates, suitable bound for the Lagrangian multipliers,   size and limiting location of the vortex cores.

For convenience, throughout this section we denote
\begin{equation}\label{dfka1}
\kappa_{\bm\varepsilon,1}=\int_{D}v^{+}d\bm x,\quad \kappa_{\bm\varepsilon,2}=-\int_{D}v^{-}d\bm x,\quad v\in\mathcal R_{\bm\varepsilon}.
\end{equation}
In view of \eqref{ep1}, we have that
\begin{equation}
\lim_{|\bm\varepsilon|\to 0}\kappa_{\bm\varepsilon,i}=\kappa_{i}, \quad i=1,2.
\end{equation}
Fix $\bar\varepsilon>0$  small enough  such that there exist two points $\bm x_{1}, \bm x_{2}\in D$ satisfying
\begin{equation}\label{e110}
 {B_{2\bar\varepsilon}(\bm x_{1})}\subset D, \quad  {B_{2\bar\varepsilon}(\bm x_{2})}\subset D,\quad {B_{2\bar\varepsilon}(\bm x_{1})}\cap  {B_{2\bar\varepsilon}(\bm x_{2})}=\varnothing.
\end{equation}
Without loss of generality, we assume that for any $|\bm\varepsilon|<\bar\varepsilon$,
\begin{equation}\label{wlgg1}
\frac{1}{2}|\kappa_{i}|\leq |\kappa_{\bm\varepsilon,i}|\leq\frac{3}{2}|\kappa_{i}|,\quad i=1,2,
\end{equation}
\begin{equation}\label{wlgg2}
 \varepsilon_{1}^{2/p'}\|v^{+}\|_{L^{p}(D)}\leq K_{1},\quad  \varepsilon_{2}^{2/p'}\|v^{-}\|_{L^{p}(D)}\leq K_{2},\quad\forall\,v\in\mathcal R_{\bm\varepsilon},\quad i=1,2,
\end{equation}
where $K_{1}, K_{2}>0$ do not depend on $\bm\varepsilon$. Note that \eqref{wlgg2} is doable by  \eqref{ep2}.  Combining \eqref{etc3} and \eqref{wlgg2}, we have that 
\begin{equation}\label{ljj1}
  \|\varrho_{\bm\varepsilon,1}\|_{L^{p}(\mathbb R^2)}\leq K_1,\quad  \|\varrho_{\bm\varepsilon,2}\|_{L^{p}(\mathbb R^2)}\leq K_2,\quad\forall\,|\bm\varepsilon|<\bar\varepsilon.
\end{equation}

In the rest of this paper, we always assume that $|\bm\varepsilon|<\bar\varepsilon$. For convenience, we shall denote by $C$ various positive constants possibly depending on $D,p,\kappa_{1},\kappa_{2}, K_{1}, K_{2}$ and $\bar\varepsilon$, but not on $\bm\varepsilon,$ whose values may change from line to line.

 \subsection{Basic energy estimates}

The purpose of this subsection is to prove the following energy estimates.
\begin{proposition}\label{p33}
There exists $C>0$ such that  for any  $\zeta\in\mathcal M_{\bm\varepsilon}$,
\begin{equation}\label{u301}
-\frac{\kappa^{2}_{\bm\varepsilon,1}}{4\pi}\ln\varepsilon_{1}-C\leq E(\zeta^{+})\leq -\frac{\kappa^{2}_{\bm\varepsilon,1}}{4\pi}\ln\varepsilon_{1}+C,
\end{equation}
\begin{equation}\label{u302}
-\frac{\kappa^{2}_{\bm\varepsilon,2}}{4\pi}\ln\varepsilon_{2}-C\leq E(\zeta^{-})\leq -\frac{\kappa^{2}_{\bm\varepsilon,2}}{4\pi}\ln\varepsilon_{2}+C,
\end{equation}
\begin{equation}\label{u303}
  \int_{D}\zeta^{+}\mathcal G\zeta^{-} d\mathbf x\leq  C.
\end{equation}
\end{proposition}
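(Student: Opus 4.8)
The three estimates will be proved together, by playing the \emph{rearrangement} upper bounds for the self‑energies $E(\zeta^+),E(\zeta^-)$ against a \emph{maximality} lower bound for $M_{\bm\varepsilon}$. Fix $\zeta\in\mathcal M_{\bm\varepsilon}$. Since $\zeta$ is a rearrangement of $v$, its parts $\zeta^+,\zeta^-$ are rearrangements of $v^+,v^-$, so their symmetric‑decreasing rearrangements about $\bm 0$ are precisely $\rho_{\bm\varepsilon,1},\rho_{\bm\varepsilon,2}$. First I would prove the upper halves of \eqref{u301}--\eqref{u302}. Split $G$ into its logarithmic singularity and the regular part $h$; since $h$ is bounded below on $D\times D$, the $h$‑contribution to $E(\zeta^+)$ is $\le C\kappa_{\bm\varepsilon,1}^2\le C$. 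For the logarithmic contribution, after a standard shift making $-\ln|\cdot|$ admissible for Lemma \ref{rri2} one gets, by Riesz's inequality, $\iint_{D\times D}(-\ln|\bm x-\bm y|)\zeta^+(\bm x)\zeta^+(\bm y)\,d\bm x\,d\bm y\le\iint(-\ln|\bm x-\bm y|)\rho_{\bm\varepsilon,1}(\bm x)\rho_{\bm\varepsilon,1}(\bm y)\,d\bm x\,d\bm y$; rescaling by $\varepsilon_1$ via \eqref{rhss} turns the right‑hand side into $-\kappa_{\bm\varepsilon,1}^2\ln\varepsilon_1+\iint(-\ln|\bm x-\bm y|)\varrho_{\bm\varepsilon,1}(\bm x)\varrho_{\bm\varepsilon,1}(\bm y)\,d\bm x\,d\bm y$. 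The last integral is $O(1)$ uniformly in $\bm\varepsilon$: on $\{|\bm x-\bm y|\ge1\}$ the integrand is nonpositive and bounded below because $\mathrm{supp}\,\varrho_{\bm\varepsilon,1}\subset B_1(\bm 0)$, while on $\{|\bm x-\bm y|<1\}$ it is controlled by H\"older's inequality against $(-\ln|\cdot|)_+\in L^{p'}_{\mathrm{loc}}(\mathbb R^2)$, using $\|\varrho_{\bm\varepsilon,1}\|_{L^1}=\kappa_{\bm\varepsilon,1}\le C$ and $\|\varrho_{\bm\varepsilon,1}\|_{L^p}\le K_1$ from \eqref{ljj1}. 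This gives $E(\zeta^+)\le-\tfrac{\kappa_{\bm\varepsilon,1}^2}{4\pi}\ln\varepsilon_1+C$, and symmetrically for $E(\zeta^-)$.

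Next I would establish $M_{\bm\varepsilon}\ge-\tfrac{\kappa_{\bm\varepsilon,1}^2}{4\pi}\ln\varepsilon_1-\tfrac{\kappa_{\bm\varepsilon,2}^2}{4\pi}\ln\varepsilon_2-C$ by testing \eqref{maxep} with the well‑separated configuration $v_\ast:=\rho_{\bm\varepsilon,1}(\cdot-\bm x_1)-\rho_{\bm\varepsilon,2}(\cdot-\bm x_2)$, where $\bm x_1,\bm x_2$ are the fixed points from \eqref{e110}. For $|\bm\varepsilon|<\bar\varepsilon$ the supports $B_{\varepsilon_1}(\bm x_1)$ and $B_{\varepsilon_2}(\bm x_2)$ are disjoint subsets of $D$, so $v_\ast$ has the same distribution function as $v$, hence $v_\ast\in\mathcal R_{\bm\varepsilon}$ and $M_{\bm\varepsilon}\ge E(v_\ast)$. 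Writing $E(v_\ast)=E(v_\ast^+)+E(v_\ast^-)-2\int_D v_\ast^+\mathcal G v_\ast^-\,d\bm x$: the two self‑energies are $\ge-\tfrac{\kappa_{\bm\varepsilon,i}^2}{4\pi}\ln\varepsilon_i-C$ by the same rescaling as above (the regular part is now evaluated only on a fixed compact neighbourhood of $(\bm x_i,\bm x_i)$ inside $D\times D$, hence bounded), and the interaction term is $O(1)$ because $\bm x_1\ne\bm x_2$ keeps $G$ bounded on $\overline{B_{\varepsilon_1}(\bm x_1)}\times\overline{B_{\varepsilon_2}(\bm x_2)}$. This yields the asserted lower bound for $M_{\bm\varepsilon}$.

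It then remains to combine these ingredients through the identity $M_{\bm\varepsilon}=E(\zeta)=E(\zeta^+)+E(\zeta^-)-2\int_D\zeta^+\mathcal G\zeta^-\,d\bm x$. Since $G\ge0$ and $\zeta^\pm\ge0$ we have $\int_D\zeta^+\mathcal G\zeta^-\,d\bm x\ge0$, so $E(\zeta^+)+E(\zeta^-)\ge M_{\bm\varepsilon}$; subtracting the rearrangement upper bound for $E(\zeta^-)$ and inserting the lower bound for $M_{\bm\varepsilon}$ gives the lower half of \eqref{u301} (the $\varepsilon_2$‑terms cancel), and symmetrically that of \eqref{u302}. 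Finally, rewriting the identity as $2\int_D\zeta^+\mathcal G\zeta^-\,d\bm x=E(\zeta^+)+E(\zeta^-)-M_{\bm\varepsilon}$ and feeding in the two rearrangement upper bounds together with the lower bound for $M_{\bm\varepsilon}$, all the $\ln\varepsilon_i$ terms cancel and leave $\int_D\zeta^+\mathcal G\zeta^-\,d\bm x\le C$, which is \eqref{u303}.

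The main obstacle is precisely \eqref{u303}: the interaction energy between $\zeta^+$ and $\zeta^-$ cannot be bounded by rearrangement or H\"older arguments alone — a priori the two cores need not be separated, and those arguments only produce something of size $|\ln(\min\{\varepsilon_1,\varepsilon_2\})|$ — so its boundedness is genuinely extracted from the optimality of $\zeta$ via the comparison with the separated competitor $v_\ast$, which is exactly why $v_\ast$ must concentrate near two points lying a fixed distance apart. A secondary but essential technical point is that the uniform $L^p$‑control \eqref{ljj1} furnished by (H2) is what keeps the rescaled self‑interaction integrals $\iint(-\ln|\bm x-\bm y|)\varrho_{\bm\varepsilon,i}\varrho_{\bm\varepsilon,i}$ bounded as $|\bm\varepsilon|\to0$, and hence what makes the $+C$ errors in \eqref{u301}--\eqref{u303} legitimate.
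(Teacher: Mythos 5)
Your proposal is correct and follows essentially the same route as the paper: rearrangement (Riesz) upper bounds for the self-energies after rescaling to $\varrho_{\bm\varepsilon,i}$, a lower bound for $M_{\bm\varepsilon}$ from the well-separated competitor $\rho_{\bm\varepsilon,1}(\cdot-\bm x_1)-\rho_{\bm\varepsilon,2}(\cdot-\bm x_2)$, and then the decomposition of $E(\zeta)$ together with $G>0$ to extract the lower halves of \eqref{u301}--\eqref{u302} and the interaction bound \eqref{u303}. The only cosmetic difference is the factor $2$ in your identity $E(\zeta)=E(\zeta^+)+E(\zeta^-)-2\int_D\zeta^+\mathcal G\zeta^-$, which matches the literal definition \eqref{kine} (the paper's \eqref{o30} omits it, consistent with a $\tfrac12$-normalized energy), and this has no effect on the argument.
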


\begin{proof}
For clarity, the proof is divided into three steps:

\noindent{\bf Step 1.}  There exists $C>0$ such that 
\begin{equation}\label{u300}
E(\zeta)\geq -\frac{\kappa^{2}_{\bm\varepsilon,1}}{4\pi}\ln\varepsilon_{1}-\frac{\kappa^{2}_{\bm\varepsilon,2}}{4\pi}\ln\varepsilon_{2}-C\quad \forall\,\zeta\in\mathcal M_{\bm\varepsilon}.
\end{equation}
To prove \eqref{u300}, the idea is to choose a suitable test function and compute its energy.  Fix two points $\bm x_{1}, \bm x_{2}\in D$ such that \eqref{e110} holds.
As in Section 2,  let $\rho_{\bm\varepsilon,1}, \rho_{\bm\varepsilon,2}$ be the symmetric-decreasing rearrangement of $v^+, v^-$  with respect to the origin.
Denote
\[ v_{1}(\bm x)=\rho_{\bm\varepsilon,1}(\bm x-\bm x_{1}),\quad v_{2}(\bm x)=\rho_{\bm\varepsilon, 2}(\bm x-\bm x_{2}).\]
Then up to a set of zero Lebesgue measure,
\[\{\bm x\in D\mid v_i(\bm x)>0\}=B_{\varepsilon_{i}}(\bm x_{i}),\quad i=1,2.\]
It is easy to check that $v:=v_{1}-v_{2}\in\mathcal R_{\bm\varepsilon}$ since $|\bm\varepsilon|<\bar\varepsilon$. Hence 
\begin{equation}\label{eztb}
E(\zeta)\geq E(v) \quad\forall\,\zeta\in\mathcal M_{\bm\varepsilon}.
\end{equation} 
Now we estimate $E(v)$. Write 
\begin{equation}\label{e3002}
E(v)=\frac{1}{4\pi}\int_{D}\int_{D}\ln\frac{1}{|\bm x-\bm y|}v(\bm x)v(\bm y)d\bm xd\bm y-\frac{1}{2}\int_{D}\int_{D}h(\bm x,\bm y)v(\bm x)v(\bm y)d\bm xd\bm y.
\end{equation}
For the first integral on the right-hand side of \eqref{e3002}, we have that
\begin{equation}\label{fio1}
\begin{split}
 &\int_{D}\int_{D}\ln\frac{1}{|\bm x-\bm y|}v(\bm x)v(\bm y)d\bm xd\bm y\\
 =&\int_{B_{\varepsilon_{1}}(\bm x_{1})}\int_{B_{\varepsilon_{1}}(\bm x_{1})}\ln\frac{1}{|\bm x-\bm y|}v_{1}(\bm x)v_{1}(\bm y)d\bm xd\bm y+\int_{B_{\varepsilon_{2}}(\bm x_{2})}\int_{B_{\varepsilon_{2}}(\bm x_{2})}\ln\frac{1}{|\bm x-\bm y|}v_{2}(\bm x)v_{2}(\bm y)d\bm xd\bm y\\
 &-2\int_{B_{\varepsilon_{1}}(\bm x_{1})}\int_{B_{\varepsilon_{2}}(\bm x_{2})}\ln\frac{1}{|\bm x-\bm y|}v_{1}(\bm x)v_{2}(\bm y)d\bm xd\bm y\\
 \geq&-\kappa_{\bm\varepsilon,1}^{2}\ln(2\varepsilon_{1})-\kappa_{\bm\varepsilon,2}^{2}\ln(2\varepsilon_{2}) -2\kappa_{\bm\varepsilon,1}\kappa_{\bm\varepsilon,2}\ln(2\bar\varepsilon)\\
 \geq &-\kappa_{\bm\varepsilon,1}^{2}\ln \varepsilon_{1}-\kappa_{\bm\varepsilon,2}^{2}\ln\varepsilon_{2} -C.
    \end{split}
    \end{equation}
Note that in the second to last inequality we have used the following implications of \eqref{e110}:
 \[|\bm x-\bm y|\leq 2\varepsilon_{1}\quad\forall\,\bm x,\bm y\in B_{\varepsilon_{1}}(\bm x_{1}),\]
  \[|\bm x-\bm y|\leq 2\varepsilon_{2}\quad\forall\,\bm x,\bm y\in B_{\varepsilon_{2}}(\bm x_{2}),\]
 \[|\bm x-\bm y|\geq 2\bar\varepsilon\quad\forall\,\bm x\in B_{\varepsilon_{1}}(\bm x_{1}), \bm y\in B_{\varepsilon_{2}}(\bm x_{2}).\]
For the second integral on the right-hand side of \eqref{e3002}, we have that
\begin{equation}\label{fio2}
\begin{split}
& \int_{D}\int_{D}h(\bm x,\bm y)v(\bm x)v(\bm y)d\bm xd\bm y \\
=&\int_{B_{\varepsilon_{1}}(\bm x_{1})}\int_{B_{\varepsilon_{1}}(\bm x_{1})}h(\bm x,\bm y)v_{1}(\bm x)v_{1}(\bm y)d\bm xd\bm y+\int_{B_{\varepsilon_{2}}(\bm x_{2})}\int_{B_{\varepsilon_{2}}(\bm x_{2})}h(\bm x,\bm y)v_{2}(\bm x)v_{2}(\bm y)d\bm xd\bm y\\
 &-2\int_{B_{\varepsilon_{1}}(\bm x_{1})}\int_{B_{\varepsilon_{2}}(\bm x_{2})}h(\bm x,\bm y)v_{1}(\bm x)v_{2}(\bm y)d\bm xd\bm y\\
 \leq&\kappa_{\bm\varepsilon,1}^{2} \|h\|_{L^{\infty}(B_{\bar\varepsilon}(\bm x_{1})\times B_{\bar\varepsilon}(\bm x_{1}))}+\kappa_{\bm\varepsilon,2}^{2} \|h\|_{L^{\infty}(B_{\bar\varepsilon}(\bm x_{2})\times B_{\bar\varepsilon}(\bm x_{2}))}-2\kappa_{\bm\varepsilon,1}\kappa_{\bm\varepsilon,2} \|h\|_{L^{\infty}(B_{\bar\varepsilon}(\bm x_{1})\times B_{\bar\varepsilon}(\bm x_{2}))}\\
\leq& C.
    \end{split}
    \end{equation}
Here we used the fact that $h$ is smooth (thus locally bounded) in $D\times D$. The desired estimate \eqref{u300} follows from \eqref{fio1} and \eqref{fio2}.

\noindent{\bf Step 2.}  There exists $C>0$ such that  
\begin{equation}\label{e3003}
E(\zeta^{+})\leq -\frac{\kappa^{2}_{\bm\varepsilon,1}}{4\pi}\ln\varepsilon_{1}+C, 
\quad 
E(\zeta^{-})\leq -\frac{\kappa^{2}_{\bm\varepsilon,2}}{4\pi}\ln\varepsilon_{2}+C,\quad\forall\, \zeta\in\mathcal M_{\bm\varepsilon}.
\end{equation}
We only prove the estimate for $E(\zeta^+)$. The estimate for $E(\zeta^-)$  can be obtained analogously. For any $\zeta\in\mathcal M_{\bm\varepsilon}$, write
\begin{equation}\label{kpp1}
E(\zeta^{+})=\frac{1}{4\pi}\int_{D}\int_{D}\ln\frac{1}{|\bm x-\bm y|}\zeta^{+}(\bm x)\zeta^{+}(\bm y)d\bm xd\bm y-\frac{1}{2}\int_{D}\int_{D}h(\bm x,\bm y)\zeta^{+}(\bm x)\zeta^{+}(\bm y)d\bm xd\bm y.
\end{equation}
The first integral in \eqref{kpp1} can be estimated 
as follows:
\begin{align*}
&\int_{D}\int_{D}\ln\frac{1}{|\bm x-\bm y|}\zeta^{+}(\bm x)\zeta^{+}(\bm y)d\bm xd\bm y\\
\leq& \int_{\mathbb R^{2}}\int_{\mathbb R^{2}}\ln\frac{1}{|\bm x-\bm y|}\rho_{\bm\varepsilon,1}(\bm x)\rho_{\bm\varepsilon,1}(\bm y)d\bm xd\bm y\\
=&\frac{1}{\varepsilon_{1}^{4}}\int_{\mathbb R^{2}}\int_{\mathbb R^{2}}\ln\frac{1}{|\bm x-\bm y|}\varrho_{\bm\varepsilon,1}\left(\frac{\bm x}{\varepsilon_{1}}\right)\varrho_{\bm\varepsilon,1}\left(\frac{\bm y}{\varepsilon_{1}}\right)d\bm xd\bm y\\
=&\int_{B_{1}(\bm 0)}\int_{B_{1}(\bm 0)}\ln\frac{1}{|\varepsilon_{1}\bm x-\varepsilon_{1}\bm y|}\varrho_{\bm\varepsilon,1}(\bm x)\varrho_{\bm\varepsilon,1}(\bm y)d\bm xd\bm y\\
=&-\kappa_{\bm\varepsilon,1}^{2}\ln\varepsilon_{1}+\int_{B_{1}(\bm 0)}\int_{B_{1}(\bm 0)}\ln\frac{1}{|\bm x-\bm y|}\varrho_{\bm\varepsilon,1}(\bm x)\varrho_{\bm\varepsilon,1}(\bm y)d\bm xd\bm y\\
\leq &-\kappa_{\bm\varepsilon,1}^{2}\ln\varepsilon_{1}+\kappa_{\bm\varepsilon,1}\sup_{\bm x\in B_1(\bm 0)}\int_{B_{1}(\bm 0)}\ln \frac{1}{|\bm x-\bm y|} \varrho_{\bm\varepsilon,1}(\bm y)d\bm y \\
\leq &-\kappa_{\bm\varepsilon,1}^{2}\ln\varepsilon_{1}+\kappa_{\bm\varepsilon,1} \int_{B_{1}(\bm 0)}\ln \frac{1}{|\bm y|} \varrho_{\bm\varepsilon,1}(\bm y)d\bm y\\
\leq &-\kappa_{\bm\varepsilon,1}^{2}\ln\varepsilon_{1}+\kappa_{\bm\varepsilon,1}\left\| \ln |\bm y| \right\|_{L^{p'}(B_{1}(\bm 0))}\left\| \varrho_{\bm\varepsilon,1} \right\|_{L^{p}(\mathbb R^{2})}\\
\leq &-\kappa_{\bm\varepsilon,1}^{2}\ln\varepsilon_{1}+C.
\end{align*}
Note that we have used  Lemma \ref{rri2} in the first inequality,  Lemma \ref{rri1} in the third to last inequality, and \eqref{ljj1} in the last inequality. For the second integral in \eqref{kpp1}, since $h$ is bounded from below in $D\times D,$ we have that
\begin{equation*}
\int_{D}h(\bm x,\bm y)\zeta^{+}(\bm x)\zeta^{+}(\bm y)d\bm xd\bm y\geq \kappa^{2}_{\bm\varepsilon,1}\inf_{\bm x,\bm y\in D }h(\bm x,\bm y)\geq -C.
\end{equation*}
This completes the proof.

\noindent{\bf Step 3.}  Now we are ready to prove \eqref{u301}-\eqref{u303} based on Step 1 and Step 2.
First notice that
\begin{equation}\label{o30}
E(\zeta)=E(\zeta^{+})+E(\zeta^{-})-\int_{D}\zeta^{+}\mathcal G\zeta^{-}d\bm x.
\end{equation}
Since $G(\bm x,\bm y)>0$ for any $\bm x, \bm y\in D, \bm x\neq \bm y$, we deduce that
\begin{equation}\label{o31}
\int_{D}\zeta^{+}\mathcal G\zeta^{-}d\bm x> 0.
\end{equation}
Combining \eqref{u300}, \eqref{o30} and \eqref{o31}, we obtain
\begin{equation}\label{o32}
E(\zeta^{+})+E(\zeta^{-})\geq -\frac{\kappa^{2}_{\bm\varepsilon,1}}{4\pi}\ln\varepsilon_{1}-\frac{\kappa^{2}_{\bm\varepsilon,2}}{4\pi}\ln\varepsilon_{2}-C.
\end{equation}
From \eqref{e3003} and \eqref{o32}, we  obtain
\[E(\zeta^{+}) \geq -\frac{\kappa^{2}_{\bm\varepsilon,1}}{4\pi}\ln\varepsilon_{1} -C,\quad E(\zeta^{-}) \geq -\frac{\kappa^{2}_{\bm\varepsilon,2}}{4\pi}\ln\varepsilon_{2} -C.\]
Finally \eqref{u303} follows from \eqref{u300}, \eqref{e3003} and \eqref{o30}. 

\end{proof}

\subsection{Estimates for Lagrangian multipliers}
By Theorem \ref{thm0}, for any
$\zeta\in\mathcal M_{\bm\varepsilon}$ there exists  some  increasing function  $\phi_\zeta:\mathbb R\to\mathbb R\cup\{\pm\infty\}$ such that
\begin{equation}\label{fv1}
\zeta=\phi_\zeta(\mathcal G\zeta) \quad\mbox{a.e. in $D$, }
\end{equation}
 Based on this fact, we can define the Lagrangian multipliers $\mu_{\zeta,1}, \mu_{\zeta,2}$ related to $\zeta$ as follows:
 \begin{equation}\label{lmp}
 \mu_{\zeta,1}=\inf\{s\in\mathbb R\mid \phi_\zeta(s)>0\},\quad  \mu_{\zeta,2}=\sup\{s\in\mathbb R\mid \phi_\zeta(s)<0\}.
 \end{equation}

 \begin{lemma}\label{vcc1}
 For any $\zeta\in\mathcal M_{\bm\varepsilon},$ it holds that 
 \begin{equation}\label{vc90}
 V^+_\zeta=\{\bm x\in D\mid \mathcal G\zeta(\bm x)>\mu_{\zeta,1}\},
 \end{equation}
 \begin{equation}\label{vc91}
 V^-_\zeta=\{\bm x\in D\mid\mathcal G\zeta(\bm x)<\mu_{\zeta,2}\},
 \end{equation}
 where $ V^\pm_\zeta$ is defined by \eqref{defvc}.
 \end{lemma}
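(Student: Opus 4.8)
The strategy is to work with the stream function $u:=\mathcal G\zeta$, which belongs to $W^{2,p}\cap W^{1,p}_{0}(D)$, satisfies $-\Delta u=\zeta$ a.e. in $D$, and for which \eqref{fv1} yields $\zeta=\phi_{\zeta}(u)$ a.e. in $D$ with $\phi_{\zeta}$ increasing. Since $\zeta$ is only an $L^{p}$ function, the sets in \eqref{vc90}--\eqref{vc91} are defined up to Lebesgue-null sets, and it is enough to prove the two identities in that sense.

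First I would record the elementary consequences of monotonicity: from \eqref{lmp} and the fact that $\phi_{\zeta}$ is increasing, one has $\phi_{\zeta}(s)>0$ whenever $s>\mu_{\zeta,1}$ and $\phi_{\zeta}(s)\le 0$ whenever $s<\mu_{\zeta,1}$, and symmetrically $\phi_{\zeta}(s)<0$ whenever $s<\mu_{\zeta,2}$ and $\phi_{\zeta}(s)\ge 0$ whenever $s>\mu_{\zeta,2}$. (If $\mu_{\zeta,1}=+\infty$ then $V^{+}_{\zeta}$ and $\{\mathcal G\zeta>\mu_{\zeta,1}\}$ are both empty, and similarly when $\mu_{\zeta,2}=-\infty$, so nothing is lost by assuming the multipliers finite.) The key analytic input, which I expect to be the only real point of care, is the classical fact that a $W^{2,p}$ function has vanishing second derivatives, hence vanishing Laplacian, almost everywhere on each of its level sets: applying the Stampacchia-type lemma on level sets of Sobolev functions first to $u-c$ and then to its first-order partials gives $\Delta u=0$ a.e. on $\{\bm x\in D\mid u(\bm x)=c\}$ for every $c\in\mathbb R$. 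In particular $\zeta=-\Delta u=0$ a.e. on $\{\mathcal G\zeta=\mu_{\zeta,1}\}$ and on $\{\mathcal G\zeta=\mu_{\zeta,2}\}$.

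Granting these, \eqref{vc90} follows by a double inclusion. If $\zeta(\bm x)>0$ then $\phi_{\zeta}(u(\bm x))>0$, which forces $u(\bm x)\ge\mu_{\zeta,1}$; combined with the vanishing of $\zeta$ a.e. on $\{u=\mu_{\zeta,1}\}$ this gives $u(\bm x)>\mu_{\zeta,1}$ for a.e. $\bm x\in V^{+}_{\zeta}$, i.e. $V^{+}_{\zeta}\subset\{\mathcal G\zeta>\mu_{\zeta,1}\}$ up to a null set. Conversely, for a.e. $\bm x$ with $u(\bm x)>\mu_{\zeta,1}$ one has $\zeta(\bm x)=\phi_{\zeta}(u(\bm x))>0$, which is the reverse inclusion. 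The identity \eqref{vc91} for $V^{-}_{\zeta}$ is obtained verbatim with $\mu_{\zeta,2}$ in place of $\mu_{\zeta,1}$ and the inequalities reversed, using that $\phi_{\zeta}(s)\ge 0$ for $s>\mu_{\zeta,2}$ together with $\zeta=0$ a.e. on $\{\mathcal G\zeta=\mu_{\zeta,2}\}$.
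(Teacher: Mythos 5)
Your proof is correct and follows essentially the same route as the paper: the sign of $\phi_{\zeta}$ on either side of $\mu_{\zeta,1}$ (resp. $\mu_{\zeta,2}$) from the definition of the multipliers together with monotonicity, plus the fact that $\zeta=-\Delta\mathcal G\zeta$ vanishes a.e.\ on the level set $\{\mathcal G\zeta=\mu_{\zeta,1}\}$ because weak derivatives of Sobolev functions vanish a.e.\ on level sets. Your explicit treatment of the degenerate cases $\mu_{\zeta,1}=+\infty$, $\mu_{\zeta,2}=-\infty$ and the careful statement $\phi_{\zeta}(s)\le 0$ for $s<\mu_{\zeta,1}$ are minor refinements of the same argument.
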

 \begin{proof}
 We only prove \eqref{vc90}.
 From the definition of $\mu_{\zeta,1}$,  it is clear that 
\begin{equation}\label{jabi1}
\zeta>0\quad \mbox{ a.e. in }\{\bm x\in D \mid  \mathcal G\zeta(\bm x)> \mu_{\zeta,1}\},
\end{equation}
\begin{equation}\label{jabi2}
\zeta=0  \quad  \mbox{ a.e. in }\{\bm x\in D \mid  \mathcal G\zeta(\bm x)< \mu_{\zeta,1}\}.
\end{equation}
On the level set $\{\bm x\in D\mid  \mathcal G\zeta(\bm x)= \mu_{\zeta,1}\},$ using the fact that  all  weak derivatives of a Sobolev function vanish on its level sets (see \cite{EV},  p. 153), we conclude that
\begin{equation}\label{jabi3}
\zeta=- \Delta \mathcal G\zeta=0\quad \mbox{ a.e. on }\{\bm x\in D \mid  \mathcal G\zeta(\bm x)=\mu_{\zeta,1}\}.
\end{equation} 
From \eqref{jabi1}-\eqref{jabi3} we get the desired result.
 \end{proof}

Now we turn to the estimates for $\mu_{\zeta,1}$ and $\mu_{\zeta,2}$. 
 
\begin{lemma}\label{bdd0}
There exists $C>0$ such that 
\begin{equation}\label{mosy1}
\mu_{\zeta,1}\geq -C,\quad \mu_{\zeta,2}\leq C,\quad \forall\, \zeta\in\mathcal M_{\bm\varepsilon}.
\end{equation}
\end{lemma}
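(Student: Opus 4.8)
The plan is to deduce both inequalities from a single, $\bm\varepsilon$-independent $L^{1}$ bound on the stream function $\mathcal G\zeta$, combined with the elementary fact that the vortex cores $V^{\pm}_\zeta$ occupy an arbitrarily small fraction of $D$. First I would establish that there is $C>0$ such that $\|\mathcal G\zeta\|_{L^{1}(D)}\leq C$ for every $\zeta\in\mathcal M_{\bm\varepsilon}$ and every $|\bm\varepsilon|<\bar\varepsilon$. Since $G\geq0$ on $D\times D$, we have $|\mathcal G\zeta(\bm x)|\leq\int_{D}G(\bm x,\bm y)|\zeta(\bm y)|\,d\bm y$; integrating in $\bm x$, applying Tonelli's theorem and using the symmetry of $G$ gives $\|\mathcal G\zeta\|_{L^{1}(D)}\leq\|\mathcal G\bm 1_{D}\|_{L^{\infty}(D)}\,\|\zeta\|_{L^{1}(D)}$. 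The torsion function $\mathcal G\bm 1_{D}$ solves $-\Delta(\mathcal G\bm 1_{D})=1$ with zero boundary data and is therefore bounded on the smooth bounded domain $D$, while $\|\zeta\|_{L^{1}(D)}=\kappa_{\bm\varepsilon,1}+\kappa_{\bm\varepsilon,2}\leq\tfrac32(\kappa_{1}-\kappa_{2})$ by \eqref{wlgg1}; this proves the claimed bound. (Recall $\mathcal G\zeta\in W^{2,p}\cap W^{1,p}_{0}(D)\hookrightarrow C(\bar D)$, so the pointwise statements below are legitimate; in particular Lemma \ref{vcc1} already forces $\mu_{\zeta,1},\mu_{\zeta,2}$ to be finite, since $\mathfrak m(V^{\pm}_\zeta)<\mathfrak m(D)$.)

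Next I would bound $\mu_{\zeta,1}$ from below. If $\mu_{\zeta,1}\geq0$ there is nothing to prove, so suppose $\mu_{\zeta,1}<0$. By Lemma \ref{vcc1}, $D\setminus V^{+}_\zeta=\{\bm x\in D\mid\mathcal G\zeta(\bm x)\leq\mu_{\zeta,1}\}$, hence $-\mathcal G\zeta\geq|\mu_{\zeta,1}|$ on $D\setminus V^{+}_\zeta$. Integrating over $D\setminus V^{+}_\zeta$ and invoking the $L^{1}$ bound just established,
\[
|\mu_{\zeta,1}|\,\mathfrak m\!\left(D\setminus V^{+}_\zeta\right)\leq\int_{D\setminus V^{+}_\zeta}|\mathcal G\zeta|\,d\bm x\leq\|\mathcal G\zeta\|_{L^{1}(D)}\leq C .
\]
Since $\mathfrak m(D\setminus V^{+}_\zeta)=\mathfrak m(D)-\pi\varepsilon_{1}^{2}\geq\tfrac12\mathfrak m(D)$ for $|\bm\varepsilon|<\bar\varepsilon$ (shrinking $\bar\varepsilon$ at the outset if necessary so that $\pi(\varepsilon_{1}^{2}+\varepsilon_{2}^{2})\leq\tfrac12\mathfrak m(D)$), we obtain $\mu_{\zeta,1}\geq-2C/\mathfrak m(D)$. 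The bound $\mu_{\zeta,2}\leq C$ is symmetric: if $\mu_{\zeta,2}>0$, then Lemma \ref{vcc1} gives $\mathcal G\zeta\geq\mu_{\zeta,2}$ on $D\setminus V^{-}_\zeta$, and the same integration yields $\mu_{\zeta,2}\,\mathfrak m(D\setminus V^{-}_\zeta)\leq\|\mathcal G\zeta\|_{L^{1}(D)}\leq C$, whence $\mu_{\zeta,2}\leq2C/\mathfrak m(D)$.

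The only substantive point is the uniform $L^{1}$ estimate of the first step; once it is in hand the rest is immediate, and I do not anticipate a serious obstacle. Note that this argument uses neither the energy estimates of Proposition \ref{p33} nor any separation between the positive and negative cores — it is the ``easy half'' of the multiplier analysis, in contrast with the opposite bounds, which are treated separately and genuinely require the energy expansion.
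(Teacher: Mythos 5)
Your proof is correct, and it is genuinely different from — and considerably more elementary than — the argument in the paper. The paper proves Lemma \ref{bdd0} by contradiction: assuming $\mu_{\zeta_n,2}\to+\infty$ along a sequence (hence also $\mu_{\zeta_n,1}\geq\mu_{\zeta_n,2}\to+\infty$), it invokes the energy lower bound of Proposition \ref{p33} together with Lemma \ref{le21} to get $\mu_{n,1}\geq-\frac{\kappa_{n,1}}{2\pi}\ln\varepsilon_{n,1}-C$, runs a concentration argument to show $\mathrm{diam}(V^+_{\zeta_n})\leq 2L\varepsilon_{n,1}$, deduces that $\mathcal G\zeta_n\leq C$ outside a fixed ball around the positive core, and concludes that $V^-_{\zeta_n}=\{\mathcal G\zeta_n<\mu_{n,2}\}$ would have to fill most of $D$, contradicting $\mathfrak m(V^-_{\zeta_n})=\pi\varepsilon_{n,2}^2\to0$. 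Your route — a uniform bound $\|\mathcal G\zeta\|_{L^1(D)}\leq\|\mathcal G\bm 1_D\|_{L^\infty(D)}\|\zeta\|_{L^1(D)}\leq C$ followed by a Chebyshev-type estimate on $D\setminus V^\pm_\zeta$, whose measure is bounded below via \eqref{ep0} — is valid and avoids Proposition \ref{p33}, Lemma \ref{le21}, and assumption (H2) entirely; you correctly identify $L^1$ as the right norm here, since $\|\mathcal G\zeta^+\|_{L^\infty}$ grows like $|\ln\varepsilon_1|$ and would not work. What the paper's longer argument buys is essentially nothing extra for this lemma: the intermediate facts it establishes (the logarithmic lower bound on $\mu_{\zeta,1}$ and the diameter bound) are rederived properly in Proposition \ref{bdd00} and Proposition \ref{prp45} once Lemma \ref{bdd0} is available, so your shortcut leaves the rest of Section 4 intact. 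Two trivial points: $\|\zeta\|_{L^1(D)}=\kappa_{\bm\varepsilon,1}-\kappa_{\bm\varepsilon,2}$ (since $\kappa_{\bm\varepsilon,2}<0$), not $\kappa_{\bm\varepsilon,1}+\kappa_{\bm\varepsilon,2}$, though your final bound $\tfrac32(\kappa_1-\kappa_2)$ is the correct one; and the identification $\int_D G(\bm x,\bm y)\,d\bm x=\mathcal G\bm 1_D(\bm y)$ does use the symmetry of $G$, as you note, but one could equally bound $\sup_{\bm y}\int_D G(\bm x,\bm y)\,d\bm x$ directly from the logarithmic singularity.
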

\begin{proof}
We argue by contradiction. Without loss of generality, we assume that $\mu_{\zeta,2}$ is not bounded from above as $|\bm\varepsilon|\to0$, i.e.,  there exist $\{\bm\varepsilon_n\}$ with $|\bm\varepsilon_n|\to0$ as $n\to+\infty$ and $\zeta_n\in \mathcal M_{\bm\varepsilon_{n}}$ for each $n$, such that 
\begin{equation}\label{mosy2}
\mu_{n,2}:=\mu_{\zeta_{n},2}\to+\infty\quad\mbox{as }n\to+\infty.
\end{equation}
Since $\mu_{n,1}:=\mu_{\zeta_{n},1}\geq \mu_{n,2},$  it follows that
$\mu_{n,1}\to+\infty$ as $n\to+\infty.$

We claim that there exists $C>0$ such that
\begin{equation}\label{e310}
\mu_{n,1}\geq -\frac{\kappa_{n,1}}{2\pi}\ln\varepsilon_{n,1}-C 
\end{equation}
for sufficiently large $n$. 
First by Proposition \ref{p33} we have that 
\begin{equation}\label{e308}
\int_{D}\zeta_{n}^{+}\mathcal G\zeta_{n}d\bm x\geq -\frac{\kappa^{2}_{n,1}}{2\pi}\ln\varepsilon_{n,1}-C,
\end{equation}
where $\kappa_{n,1}:=\kappa_{\bm\varepsilon_{n},1},$ $\bm\varepsilon_{n}=(\varepsilon_{n,1},\varepsilon_{n,2}).$
On the other hand,  for  sufficiently large $n$ such that $\mu_{n,1}>0,$  
we have that $u_n:=(\mathcal G\zeta_{n}-\mu_{n,1})^{+}\in W^{2,p}\cap W^{1,p}_{0}(D).$  Hence by integration by parts,
\begin{equation}\label{e309}
\begin{split}
\int_{D}\zeta_{n}^{+}\mathcal G\zeta_{n}d\bm x&=\int_{D}\zeta_{n}^{+}(\mathcal G\zeta_{n}-\mu_{n,1})d\bm x+\kappa_{n,1}\mu_{n,1}\\
&=\int_{D}\zeta_{n}^{+}u_nd\bm x+\kappa_{n,1}\mu_{n,1}\\
&=\int_D|\nabla u_n|^2dx+\kappa_{n,1}\mu_{n,1}.
 \end{split}
 \end{equation}
 Note that in the second equality we used \eqref{vc90}. Applying Lemma \ref{le21} and recalling \eqref{wlgg2}, we have that
\begin{equation}\label{z300}
\begin{split}
 \int_D|\nabla u_n|^2dx&\leq C\|\Delta u_n\|_{L^p(D)}\mathfrak m^{1/p'}(\{\bm x\in D\mid u_n(\bm x)>0\})\\
 &=C\|\zeta^+_n\|_{L^p(D)}\mathfrak m^{1/p'}(\{\bm x\in D\mid \zeta^+_n(\bm x)>0\})\\
 &=C\varepsilon^{2/p'}_{n,1}\|\zeta^+_n\|_{L^p(D)}\\
 &\leq C.
 \end{split}
 \end{equation}
 Combining \eqref{e308}-\eqref{z300}, we obtain \eqref{e310} .

 We proceed to deduce a contradiction.
By \eqref{vc90},  it holds that 
\[\mathcal G\zeta_{n}(\bm x)>\mu_{n,1} \quad \forall\,\bm x\in V_n^+:=V_{\zeta_n}^+.\]
Taking into account \eqref{e310}, we have that
 \begin{equation}\label{gzta1}
 \mathcal G\zeta_{n}(\bm x)\geq -\frac{\kappa_{n,1}}{2\pi}\ln\varepsilon_{n,1}-C  \quad \forall\,\bm x\in V_n^+.
 \end{equation}
 Write
 \begin{equation}\label{gzta2}
 \mathcal G\zeta_n(\bm x)=\int_{D}-\frac{1}{2\pi}\ln|\bm x-\bm y|\zeta_{n}^{+}(\bm y)d\bm y-\int_{D}h(\bm x,\bm y)\zeta_{n}^{+}(\bm y)d\bm y-\mathcal G\zeta^-(\bm x),\quad \bm x\in D.
 \end{equation}
Since  $h$ is bounded from below in $D\times D$ and $\mathcal G\zeta_n^-$ is nonnegative in $D$, we obtain from \eqref{gzta2} that 
 \begin{equation}\label{gzta3}
\int_{D}-\frac{1}{2\pi}\ln|\bm x-\bm y|\zeta_{n}^{+}(\bm y)d\bm y\geq  \mathcal G\zeta_n(\bm x)-C\quad\forall\, \bm x\in D. 
 \end{equation}
From \eqref{gzta1} and \eqref{gzta3}, we obtain that 
 \begin{equation}\label{e312}
\int_{D} \ln\frac{\varepsilon_{n,1}}{|\bm x-\bm y|}\zeta_{n}^{+}(\bm y)d\bm y\geq  -C \quad \forall\,\bm x\in V_n^+.
\end{equation}
Choose $L>1$ to be determined later, we have that
 \begin{equation}\label{e313}
\int_{B_{L\varepsilon_{n,1}}(\bm x)} \ln\frac{\varepsilon_{n,1}}{|\bm x-\bm y|}\zeta_{n}^{+}(\bm y)d\bm y+\int_{D\setminus B_{L\varepsilon_{n,1}}(\bm x)} \ln\frac{\varepsilon_{n,1}}{|\bm x-\bm y|}\zeta_{n}^{+}(\bm y)d\bm y\geq  -C\quad \forall\,\bm x\in V_n^+.
\end{equation} 
For the first term on the left-hand side of \eqref{e313}, we have for any $\bm x$ that
 \begin{equation}\label{e314}
 \begin{split}
\int_{B_{L\varepsilon_{n,1}}(\bm x)} \ln\frac{\varepsilon_{n,1}}{|\bm x-\bm y|}\zeta_{n}^{+}(\bm y)d\bm y &\leq \int_{B_{L\varepsilon_{n,1}}(\bm 0)} \ln\frac{\varepsilon_{n,1}}{|\bm y|}\rho_{n,1}(\bm y)d\bm y\quad(\rho_{n,1}:=\rho_{\bm\varepsilon_{n},1})\\
&=\int_{B_{\varepsilon_{n,1}}(\bm 0)} \ln\frac{\varepsilon_{n,1}}{|\bm y|}\rho_{n,1}(\bm y)d\bm y\\
&\leq \|\rho_{n,1}\|_{L^{p}(B_{\varepsilon_{n,1}}(\bm 0))}\left\|\ln\frac{\varepsilon_{n,1}}{|\bm y|}\right\|_{L^{p'}(B_{\varepsilon_{n,1}}(\bm 0))}\\
&\leq \varepsilon_{n,1}^{p'/2}\|\zeta_{n}^{+}\|_{L^{p}(D)}\|\ln |\bm y|\|_{L^{p'}(B_{1}(\bm 0))}\\
&\leq C.
\end{split}
\end{equation}
Note that in the first inequality of \eqref{e314} we used Lemma \ref{rri1}, and in the last inequality we used \eqref{wlgg2}.
For the second integral in \eqref{e313}, we have for any $\bm x$ that
 \begin{equation}\label{e315}
\int_{D\setminus B_{L\varepsilon_{n,1}}(\bm x)} \ln\frac{\varepsilon_{n,1}}{|\bm x-\bm y|}\zeta_{n}^{+}(\bm y)d\bm y\leq  -\ln L\int_{D\setminus B_{L\varepsilon_{n,1}}(\bm x)}  \zeta_{n}^{+}(\bm y)d\bm y.
\end{equation} 
From \eqref{e313}-\eqref{e315}, we obtain
 \begin{equation}\label{e316}
 \int_{D\setminus B_{L\varepsilon_{n,1}}(\bm x)}  \zeta_{n}^{+}(\bm y)d\bm y\leq \frac{C}{\ln L}\quad \forall\,\bm x\in V_n^+.
\end{equation} 
Choosing $L$ large enough (not depending on $n$) such that 
 \begin{equation}\label{e317}
 \int_{D\setminus B_{L\varepsilon_{n,1}}(\bm x)}  \zeta_{n}^{+}(\bm y)d\bm y\leq \frac{1}{3}\kappa_{n,1},
\end{equation} 
we obtain from \eqref{e316} that
 \begin{equation}\label{e318}
 \int_{ B_{L\varepsilon_{n,1}}(\bm x)}  \zeta_{n}^{+}(\bm y)d\bm y\geq \frac{2}{3}\kappa_{n,1}\quad \forall\,\bm x\in V_n^+.
\end{equation} 
With \eqref{e318} we can argue by contradiction that
 \begin{equation}\label{e319}
 \mbox{\rm diam(}V_n^+)\leq 2L\varepsilon_{n,1},
\end{equation} 
provided that $n$ is large enough.

Now we are ready to deduce a contradiction. Fix some small positive number $\delta$ such that 
\begin{equation}\label{nlss1}
\pi\delta^{2}<\frac{1}{2}\mathfrak m(D).
\end{equation}
From \eqref{e319}, we have for large $n$ that
\[ V^+_n\subset B_{\delta/2}(\bm z_{n}),\quad \bm z_{n}:=\frac{1}{\kappa_{n,1}}\int_{D}\zeta_n^{+}d\bm x.\]
Then for any $\bm x\in D\setminus B_{\delta}(\bm z_{n}),$  
\begin{equation}\label{hko1}
\begin{split}
\mathcal G\zeta_n(\bm x)&=\mathcal G\zeta_n^+(\bm x)-\mathcal G\zeta_n^-(\bm x)\\
&\leq  \mathcal G\zeta_n^+(\bm x)\\
&=\int_D-\frac{1}{2\pi}\ln|\bm x-\bm y|\zeta^+_n(\bm y)d\bm y-\int_Dh(\bm x,\bm y)\zeta^+_n(\bm y)d\bm y\\
&=\int_{B_{\delta/2}(\bm z_n)}-\frac{1}{2\pi}\ln|\bm x-\bm y|\zeta^+_n(\bm y)d\bm y-\int_Dh(\bm x,\bm y)\zeta^+_n(\bm y)d\bm y\\
&\leq  -\frac{1}{2\pi}\kappa_{n,1}\ln\left(\frac{\delta}{2}\right)-\kappa_{n,1}\inf_{\bm x,\bm y\in D}h(\bm x,\bm y)\\
&\leq C.
\end{split}
\end{equation}
Combining \eqref{hko1} and the fact that $\mu_{n,2}\to+\infty$ as $n\to+\infty$, we conclude for large $n$  that 
\begin{equation}\label{nlss2}
D\setminus B_{\delta}(\bm z_{n})\subset \{\bm x\in D\mid \mathcal G\zeta_{n}(\bm x)<\mu_{n,2}\}=\{\bm x\in D\mid \zeta_{n}^{-}(\bm x)>0\}.
\end{equation}
Now we can easily deduce a contradiction: by \eqref{ep0} it holds that  
\[\mathfrak m(\{\bm x\in D\mid \zeta_{n}^{-}(\bm x)>0\})=\pi\varepsilon^2_{n,2}\to0\]
as $n\to+\infty$, while by \eqref{nlss1} and \eqref{nlss2},
\[\mathfrak m(\{\bm x\in D\mid \zeta_{n}^{-}(\bm x)>0\})\geq \mathfrak m(D\setminus B_{\delta}(\bm z_{n}))\geq \mathfrak m(D)-\pi\delta^{2}>\frac{1}{2}\mathfrak m(D) \]
for every sufficiently large $n$.
\end{proof}

Based on Lemma \ref{bdd0},  we can prove  better a estimate  for $\mu_{\zeta,1}$ or $\mu_{\zeta,2}$.
\begin{proposition}\label{bdd00}
There exists $C>0$ such that 
\begin{equation}\label{b3000}
\mu_{\zeta,1}\geq -\frac{\kappa_{\bm\varepsilon,1}}{2\pi}\ln\varepsilon_{1}-C,
\quad 
\mu_{\zeta,2}\leq  \frac{\kappa_{\bm\varepsilon,2}}{2\pi}\ln\varepsilon_{2}+C,\quad  \forall\,\zeta\in\mathcal M_{\bm\varepsilon}.
\end{equation}
\end{proposition}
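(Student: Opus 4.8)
The plan is to prove the lower bound for $\mu_{\zeta,1}$; the bound for $\mu_{\zeta,2}$ follows by the symmetric argument (working with $\zeta^-$ in place of $\zeta^+$). The starting point is the identity already used in Lemma \ref{bdd0}: for $\zeta\in\mathcal M_{\bm\varepsilon}$ with $\mu_{\zeta,1}$ large (say $\mu_{\zeta,1}>0$; if not, Lemma \ref{bdd0} together with $-\frac{\kappa_{\bm\varepsilon,1}}{2\pi}\ln\varepsilon_1\to+\infty$ already gives the claim), setting $u:=(\mathcal G\zeta-\mu_{\zeta,1})^+\in W^{2,p}\cap W^{1,p}_0(D)$ we have, by \eqref{vc90} and integration by parts,
\[
\int_D\zeta^+\mathcal G\zeta\,d\bm x=\int_D|\nabla u|^2\,d\bm x+\kappa_{\bm\varepsilon,1}\mu_{\zeta,1}.
\]
By Lemma \ref{le21} and \eqref{wlgg2}, $\int_D|\nabla u|^2\,d\bm x\leq C$, exactly as in \eqref{z300}. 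Hence it suffices to bound $\int_D\zeta^+\mathcal G\zeta\,d\bm x$ from below by $-\frac{\kappa_{\bm\varepsilon,1}^2}{2\pi}\ln\varepsilon_1-C$.

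The key new input, compared with Lemma \ref{bdd0}, is that we can now use the information from Proposition \ref{p33} more sharply, but the real issue is the interaction term. Write
\[
\int_D\zeta^+\mathcal G\zeta\,d\bm x=2E(\zeta^+)-\int_D\zeta^+\mathcal G\zeta^-\,d\bm x.
\]
By \eqref{u301}, $2E(\zeta^+)\geq -\frac{\kappa_{\bm\varepsilon,1}^2}{2\pi}\ln\varepsilon_1-C$, so we only need $\int_D\zeta^+\mathcal G\zeta^-\,d\bm x\leq C$ — but that is precisely \eqref{u303} in Proposition \ref{p33}. Therefore
\[
\int_D\zeta^+\mathcal G\zeta\,d\bm x\geq -\frac{\kappa_{\bm\varepsilon,1}^2}{2\pi}\ln\varepsilon_1-C,
\]
and combining with the two displays above gives $\kappa_{\bm\varepsilon,1}\mu_{\zeta,1}\geq -\frac{\kappa_{\bm\varepsilon,1}^2}{2\pi}\ln\varepsilon_1-C$; dividing by $\kappa_{\bm\varepsilon,1}$ (which is bounded below by $\frac12\kappa_1>0$ via \eqref{wlgg1}) yields $\mu_{\zeta,1}\geq -\frac{\kappa_{\bm\varepsilon,1}}{2\pi}\ln\varepsilon_1-C$, as desired.

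For the estimate on $\mu_{\zeta,2}$ one repeats the argument with the roles of positive and negative parts exchanged: set $w:=(\mu_{\zeta,2}-\mathcal G\zeta)^+$, use \eqref{vc91} to get $\int_D\zeta^-\mathcal G\zeta\,d\bm x=-\int_D|\nabla w|^2\,d\bm x+\kappa_{\bm\varepsilon,2}\mu_{\zeta,2}$ (note the sign bookkeeping: $\zeta^-\le 0$, $\kappa_{\bm\varepsilon,2}=-\int\zeta^-<0$), observe $\int_D\zeta^-\mathcal G\zeta\,d\bm x=2E(\zeta^-)+\int_D\zeta^+\mathcal G\zeta^-\,d\bm x$ using \eqref{o30}-type bookkeeping, apply \eqref{u302} and \eqref{u303} to bound this below by $-\frac{\kappa_{\bm\varepsilon,2}^2}{2\pi}\ln\varepsilon_2-C$, and conclude $\kappa_{\bm\varepsilon,2}\mu_{\zeta,2}\geq -\frac{\kappa_{\bm\varepsilon,2}^2}{2\pi}\ln\varepsilon_2-C$; dividing by the \emph{negative} quantity $\kappa_{\bm\varepsilon,2}$ reverses the inequality and gives $\mu_{\zeta,2}\leq \frac{\kappa_{\bm\varepsilon,2}}{2\pi}\ln\varepsilon_2+C$.

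The main obstacle — and the reason this proposition is stated separately after Lemma \ref{bdd0} — is logical rather than computational: the bound $\int_D|\nabla u|^2\,d\bm x\leq C$ requires $u=(\mathcal G\zeta-\mu_{\zeta,1})^+$ to be a genuine $W^{2,p}\cap W^{1,p}_0$ function, which in turn implicitly relies on knowing that the structure is non-degenerate; more importantly, one must be careful that all the ``$C$''s here are the uniform constants of the section, and that \eqref{u303} (the boundedness of the interaction energy) is exactly what makes the positive and negative estimates decouple — without it one could not pass from $2E(\zeta^+)$ back to $\int\zeta^+\mathcal G\zeta$. I would present the two cases ($\mu_{\zeta,1}\le 0$ trivial, $\mu_{\zeta,1}>0$ the computation above) explicitly to make the argument airtight, and flag the sign reversal in the $\mu_{\zeta,2}$ case since that is the easiest place to make an error.
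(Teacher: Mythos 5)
Your main computation in the case $\mu_{\zeta,1}>0$ is essentially the paper's own proof: the lower bound $\int_D\zeta^{+}\mathcal G\zeta\,d\bm x\geq -\tfrac{\kappa_{\bm\varepsilon,1}^{2}}{2\pi}\ln\varepsilon_{1}-C$ obtained by writing this integral as $2E(\zeta^{+})-\int_D\zeta^{+}\mathcal G\zeta^{-}\,d\bm x$ and invoking \eqref{u301} and \eqref{u303} is exactly how \eqref{b300} is derived, and the identity $\int_D\zeta^{+}\mathcal G\zeta\,d\bm x=\int_D|\nabla u|^{2}\,d\bm x+\kappa_{\bm\varepsilon,1}\mu_{\zeta,1}$ plus Lemma \ref{le21} is \eqref{b301}--\eqref{b304}. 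The genuine gap is your treatment of the case $\mu_{\zeta,1}\leq 0$: the right-hand side $-\tfrac{\kappa_{\bm\varepsilon,1}}{2\pi}\ln\varepsilon_{1}-C$ tends to $+\infty$ as $\varepsilon_{1}\to 0$, so Lemma \ref{bdd0} (which only gives $\mu_{\zeta,1}\geq -C_{1}$) does \emph{not} "already give the claim" there — that case would contradict the claim and cannot be dismissed; it has to be absorbed into the main computation. The paper does this by taking $u=(\mathcal G\zeta-\mu_{\zeta,1}-C_{1})^{+}$ with $C_{1}$ from Lemma \ref{bdd0}, so that $\mu_{\zeta,1}+C_{1}\geq 0$ guarantees $u\in W^{1,p}_{0}(D)$ in all cases, and then uses $\zeta^{+}(\mathcal G\zeta-\mu_{\zeta,1}-C_{1})\leq \zeta^{+}u$ and absorbs $C_{1}\kappa_{\bm\varepsilon,1}$ into the constant. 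Without that shift your $u=(\mathcal G\zeta-\mu_{\zeta,1})^{+}$ fails to vanish on $\partial D$ precisely when $\mu_{\zeta,1}<0$.

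The $\mu_{\zeta,2}$ half also contains real sign errors, not just bookkeeping to be "flagged". With the paper's conventions $\zeta^{-}\geq 0$ (your parenthetical "$\zeta^{-}\leq 0$" is inconsistent with $\kappa_{\bm\varepsilon,2}=-\int_D\zeta^{-}\,d\bm x<0$), the correct identities are $\int_D\zeta^{-}\mathcal G\zeta\,d\bm x=-\int_D|\nabla w|^{2}\,d\bm x-\kappa_{\bm\varepsilon,2}\mu_{\zeta,2}$ and $\int_D\zeta^{-}\mathcal G\zeta\,d\bm x=-2E(\zeta^{-})+\int_D\zeta^{+}\mathcal G\zeta^{-}\,d\bm x$; both of your versions have the wrong sign. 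Moreover this quantity is of order $\tfrac{\kappa_{\bm\varepsilon,2}^{2}}{2\pi}\ln\varepsilon_{2}\to-\infty$, so your claimed lower bound by $-\tfrac{\kappa_{\bm\varepsilon,2}^{2}}{2\pi}\ln\varepsilon_{2}-C$ (which tends to $+\infty$) is false; what the argument needs is an \emph{upper} bound, coming from the lower bound in \eqref{u302}, from \eqref{u303}, and from $\int_D|\nabla w|^{2}\,d\bm x\leq C$. Executed correctly (with the same $C_{1}$-shift, now using $\mu_{\zeta,2}-C_{1}\leq 0$), one obtains $-\kappa_{\bm\varepsilon,2}\mu_{\zeta,2}\leq\tfrac{\kappa_{\bm\varepsilon,2}^{2}}{2\pi}\ln\varepsilon_{2}+C$ and hence $\mu_{\zeta,2}\leq-\tfrac{\kappa_{\bm\varepsilon,2}}{2\pi}\ln\varepsilon_{2}+C$, which is the form actually used for $V^{-}_{\zeta}$ in Proposition \ref{prp45} (the sign in \eqref{b3000} appears to be a misprint); your chain as written neither derives this nor the stated inequality.
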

\begin{proof}
We only prove the estimate for $\mu_{\zeta,1}$.  By Lemma \ref{bdd0}, we can choose $C_{1}>0$, not depending on $\bm\varepsilon,$ such that $\mu_{\zeta,1}\geq -C_{1}$. 
By Proposition \ref{p33},
\begin{equation}\label{b300}
\int_{D}\zeta^{+}\mathcal G\zeta d\bm x\geq -\frac{\kappa^{2}_{\bm\varepsilon,1}}{2\pi}\ln\varepsilon_{1}-C\quad  \forall\,\zeta\in\mathcal M_{\bm\varepsilon}.
\end{equation}
which can also be written as follows:
\begin{equation}\label{b301}
\int_{D}\zeta^{+}(\mathcal G\zeta-\mu_{\zeta,1}-C_{1}) d\bm x+(\mu_{\zeta,1}+C_{1})\kappa_{\bm\varepsilon,1}\geq -\frac{\kappa^{2}_{\bm\varepsilon,1}}{2\pi}\ln\varepsilon_{1}-C.
\end{equation}
Denote $u:=(\mathcal G\zeta-\mu_{\zeta,1}-C_{1})^{+}$. Then  we obtain from   \eqref{b301} that
\begin{equation}\label{b302}
\int_{D}\zeta^{+}u d\bm x+\mu_{\zeta,1} \kappa_{\bm\varepsilon,1}\geq -\frac{\kappa^{2}_{\bm\varepsilon,1}}{2\pi}\ln\varepsilon_{1}-C\quad  \forall\,\zeta\in\mathcal M_{\bm\varepsilon}.
\end{equation}
By the choice of $C_{1}$, we see that $u\in W^{1,p}_{0}(D).$ Moreover, it is easy to check that 
\[
-\Delta u= \begin{cases}
\zeta^{+} &\mbox{\rm a.e. in } \{\bm x\in D\mid u(\bm x)>0\},\\
 0&\mbox{\rm a.e. in } \{\bm x\in D\mid u(\bm x)=0\}.
 \end{cases}
\]
Hence we can apply integration by parts to estimate the integral in \eqref{b302} as follows:
\begin{equation}\label{b304}
\begin{split}
\int_{D}\zeta^{+}u d\bm x&=\int_{D}|\nabla u|^{2} d\bm x\\
 &\leq C\|\Delta u\|_{L^{p}(D)}\mathfrak m^{1/p'}(\{\bm x\in D\mid u(\bm x)>0\})\\
 &\leq  C\|\zeta^{+}\|_{L^{p}(D)}\mathfrak m^{1/p'}(\{\bm x\in D\mid \zeta(\bm x)>0\})\\
 &=C\varepsilon_1^{2/p'}\|\zeta^{+}\|_{L^{p}(D)} \\
 &\leq C.
 \end{split}
 \end{equation}
 Here we have used  Lemma \ref{le21} in the first inequality and \eqref{wlgg2} in the last inequality. The desired estimate for $\mu_{\zeta,1}$ follows from 
 \eqref{b302} and \eqref{b304} immediately.
 \end{proof}

\subsection{Size of vortex cores}
With the  estimates for the Lagrangian multipliers in the last subsection, we are ready to  estimate   the size of the vortex cores.
\begin{proposition}\label{prp45}
There exists $C>0$ such that
\begin{equation}\label{pp1}
\mbox{\rm diam(}V^+_\zeta)\leq C\varepsilon_{1},\quad
\mbox{\rm diam(}V^-_\zeta)\leq C\varepsilon_{2},\quad \forall\,\zeta\in\mathcal M_{\bm\varepsilon}.
\end{equation}
\end{proposition}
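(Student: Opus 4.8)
The plan is to run, in unconditional form, the same chain of estimates that already appears inside the proof of Lemma~\ref{bdd0}, now driven by the sharp bound on the Lagrangian multipliers from Proposition~\ref{bdd00}. It suffices to treat $V^+_\zeta$; the bound on $\mathrm{diam}(V^-_\zeta)$ follows from the sign-reversed argument, using the estimate $\mu_{\zeta,2}\le \frac{\kappa_{\bm\varepsilon,2}}{2\pi}\ln\varepsilon_2+C$ in place of the one for $\mu_{\zeta,1}$. Note that for $\zeta\in\mathcal R_{\bm\varepsilon}$ the function $\zeta^+$ is a rearrangement of $v^+$, so $\|\zeta^+\|_{L^p(D)}=\|v^+\|_{L^p(D)}$ and \eqref{wlgg2} applies to $\zeta^+$.

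First, by Lemma~\ref{vcc1} one has $\mathcal G\zeta(\bm x)>\mu_{\zeta,1}$ for every $\bm x\in V^+_\zeta$, so Proposition~\ref{bdd00} gives $\mathcal G\zeta(\bm x)\ge-\frac{\kappa_{\bm\varepsilon,1}}{2\pi}\ln\varepsilon_1-C$ on $V^+_\zeta$. Writing $\mathcal G\zeta=\mathcal G\zeta^+-\mathcal G\zeta^-$ and splitting the Green function into its logarithmic singularity and the regular part $h$, and then using that $h$ is bounded from below on $D\times D$ while $\mathcal G\zeta^-\ge 0$ (so the interaction term enters with a favorable sign), I obtain
\[\int_D\ln\frac{\varepsilon_1}{|\bm x-\bm y|}\,\zeta^+(\bm y)\,d\bm y\ \ge\ -C\qquad\forall\,\bm x\in V^+_\zeta.\]
Then, for a constant $L>1$ to be fixed, I split this integral over $B_{L\varepsilon_1}(\bm x)$ and its complement. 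On $B_{L\varepsilon_1}(\bm x)$, comparing $\zeta^+$ with its symmetric-decreasing rearrangement $\rho_{\bm\varepsilon,1}$ via Lemma~\ref{rri1} (recall $\rho_{\bm\varepsilon,1}$ is supported in $B_{\varepsilon_1}(\bm 0)$) and applying H\"older's inequality after rescaling, together with \eqref{wlgg2}, the contribution is $\le C$ uniformly in $\bm\varepsilon$; on $D\setminus B_{L\varepsilon_1}(\bm x)$ one has $\ln\frac{\varepsilon_1}{|\bm x-\bm y|}\le-\ln L$, so that piece is bounded above by $-\ln L\int_{D\setminus B_{L\varepsilon_1}(\bm x)}\zeta^+\,d\bm y$. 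Combining these gives $\int_{D\setminus B_{L\varepsilon_1}(\bm x)}\zeta^+\,d\bm y\le C/\ln L$ for all $\bm x\in V^+_\zeta$, and since $\kappa_{\bm\varepsilon,1}\ge\frac{1}{2}\kappa_1$ by \eqref{wlgg1}, choosing $L$ large enough (independently of $\bm\varepsilon$) forces
\[\int_{B_{L\varepsilon_1}(\bm x)}\zeta^+(\bm y)\,d\bm y\ \ge\ \tfrac{2}{3}\,\kappa_{\bm\varepsilon,1}\qquad\forall\,\bm x\in V^+_\zeta.\]

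Finally, the diameter estimate follows by a mass-counting contradiction: if there were $\bm x,\bm x'\in V^+_\zeta$ with $|\bm x-\bm x'|>2L\varepsilon_1$, then $B_{L\varepsilon_1}(\bm x)$ and $B_{L\varepsilon_1}(\bm x')$ would be disjoint, each carrying at least $\frac{2}{3}\kappa_{\bm\varepsilon,1}$ of the mass of $\zeta^+$, so that $\|\zeta^+\|_{L^1(D)}\ge\frac{4}{3}\kappa_{\bm\varepsilon,1}>\kappa_{\bm\varepsilon,1}$, contradicting $\|\zeta^+\|_{L^1(D)}=\kappa_{\bm\varepsilon,1}$ (cf.\ \eqref{ep0}, \eqref{dfka1}). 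Hence $\mathrm{diam}(V^+_\zeta)\le 2L\varepsilon_1$, which is the asserted bound with $C:=2L$. I do not expect a genuine obstacle here: the real difficulties — the possible non-separation of $V^+_\zeta$ and $V^-_\zeta$ and the a priori unbounded interaction energy — have already been absorbed into Propositions~\ref{p33} and~\ref{bdd00}, so that in the present step the interaction term $\mathcal G\zeta^-$ only helps, and the remaining argument is the standard Turkington-type localization.
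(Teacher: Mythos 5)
Your proof is correct and follows essentially the same route as the paper: the paper likewise combines Lemma \ref{vcc1} with Proposition \ref{bdd00} to get $\int_{D}\ln\frac{\varepsilon_{1}}{|\bm x-\bm y|}\zeta^{+}(\bm y)\,d\bm y\geq -C$ on $V^{+}_{\zeta}$ and then invokes the splitting-over-$B_{L\varepsilon_{1}}(\bm x)$ argument already carried out in the proof of Lemma \ref{bdd0} (steps \eqref{e312}--\eqref{e319}). You have merely written out explicitly the rearrangement/H\"older bound and the final two-ball mass-counting contradiction that the paper leaves as a cross-reference.
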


\begin{proof}
We only prove the estimate for diam$(V^+_\zeta)$. Fix $\zeta\in\mathcal M_{\bm\varepsilon}$ and $\bm x\in V_\zeta^+$. By \eqref{vc90} it holds that
$\mathcal G\zeta(\bm x)>\mu_{\zeta,1},$ 
and thus
\begin{equation}\label{faz0}
\mathcal G\zeta^{+}(\bm x)>\mu_{\zeta,1}.
\end{equation}
In view of \eqref{b3000}, we obtain
\begin{equation}\label{faz1}
\mathcal G\zeta^{+}(\bm x) \geq -\frac{\kappa_{\bm\varepsilon,1}}{2\pi}\ln\varepsilon_{1}-C,
\end{equation}
which can also be written as
\begin{equation}\label{faz2}
\int_{D}-\frac{1}{2\pi}\ln {|\bm x-\bm y|}\zeta^{+}(\bm y)d\bm y-\int_{D}h(\bm x,\bm y)\zeta^{+}(\bm y)d\bm y\geq  -\frac{\kappa_{\bm\varepsilon,1}}{2\pi}\ln\varepsilon_{1}-C. 
\end{equation}
Taking into account the fact that $h$ is bounded from below in $D\times D,$ we get from \eqref{faz2} that
\[
\frac{1}{2\pi}\int_{D}\ln\frac{1}{|\bm x-\bm y|}\zeta^{+}(\bm y)d\bm y\geq  -\frac{\kappa_{\bm\varepsilon,1}}{2\pi}\ln\varepsilon_{1}-C,
\]
or equivalently,
\begin{equation}\label{faz3}
\int_{D}\ln\frac{\varepsilon_{1}}{|\bm x-\bm y|}\zeta^{+}(\bm y)d\bm y\geq -C.
\end{equation}
Now we can repeat the argument from \eqref{e312} to \eqref{e319} to obtain the desired estimate for diam$(V^+_\zeta)$.

\end{proof}

\subsection{Limiting location  of vortex cores}
By Proposition \ref{prp45}, for any $\zeta\in\mathcal M_{\bm\varepsilon}$ the associated positive and negative vortex cores  ``shrink" to two points in $\bar D$ as $|\bm\varepsilon|$ vanishes (at least along some subsequence). 
In this subsection we study the  location of these two points.
\begin{proposition}\label{llvv}
Fix a sequence $\{\bm\varepsilon_{n}\}$ such that $|\bm\varepsilon_{n}|\to 0$ as $n\to+\infty$, and a sequence $\{\zeta_{n}\}$ such that $\zeta_{n}\in\mathcal M_{\bm\varepsilon_{n}}$ for each $n$. Suppose there exist two points $\bar{\bm x}_{1},\bar{\bm x}_{2}\in \bar D$ such that
\begin{equation}\label{pp2}
\mathbf X(\zeta^{+}_{n})\to\bar{\bm x}_{1},\quad \mathbf X(\zeta^{-}_{n})\to\bar{\bm x}_{2},
\end{equation}
where $\mathbf X(\zeta^{+}_{n})$ and $ \mathbf X(\zeta^{-}_{n})$ are defined as in \eqref{dece}.
Then $ \bar{\bm x}_{1},\bar{\bm x}_{2}\in  D$, $ \bar{\bm x}_{1}\neq \bar{\bm x}_{2}$, and  $(\bar{\bm x}_{1},\bar{\bm x}_{2})$ is a global minimum point of $W_{\bm \kappa}$ with $\bm\kappa=(\kappa_{1},\kappa_{2})$.

\end{proposition}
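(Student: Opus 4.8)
The plan is to run Turkington's energy-expansion argument, now keeping track of the interaction term. First I would establish the precise leading-order asymptotics of the maximal energy $M_{\bm\varepsilon}$. The lower bound comes from testing with the concrete rearrangement $v=v_1-v_2$ built from symmetric-decreasing rearrangements centered at two fixed well-separated points $\bm x_1,\bm x_2$; a more careful version of the computation in Step 1 of Proposition \ref{p33}, letting $\bm x_1,\bm x_2$ range over all admissible pairs, gives
\[
M_{\bm\varepsilon}\ \geq\ -\frac{\kappa_{\bm\varepsilon,1}^2}{4\pi}\ln\varepsilon_1-\frac{\kappa_{\bm\varepsilon,2}^2}{4\pi}\ln\varepsilon_2-\inf_{(\bm y_1,\bm y_2)}W_{\bm\kappa}(\bm y_1,\bm y_2)-o(1),
\]
because the regular-part and cross terms converge (using diam bounds and dominated convergence / continuity of $h$ and $G$ off the diagonal) to $-W_{\bm\kappa}$ evaluated at the limiting centers, plus error terms controlled by (H1), (H2) and Proposition \ref{prp45}. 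I would then prove the matching upper bound: for $\zeta_n\in\mathcal M_{\bm\varepsilon_n}$, decompose $E(\zeta_n)=E(\zeta_n^+)+E(\zeta_n^-)-\int_D\zeta_n^+\mathcal G\zeta_n^-$, insert the self-energy upper bounds from Step 2 of Proposition \ref{p33} (refined so the $O(1)$ term is $-\tfrac12\kappa_{n,1}^2 H(\mathbf X(\zeta_n^+))+o(1)$ and similarly for the negative part, via the diameter estimate and continuity of $h$), and handle the interaction term from below by $\int_D\zeta_n^+\mathcal G\zeta_n^-\geq \kappa_{n,1}\kappa_{n,2}G(\mathbf X(\zeta_n^+),\mathbf X(\zeta_n^-))-o(1)$ provided the two centers stay separated.

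This separation is exactly the first structural fact to extract. Suppose along a subsequence $\bar{\bm x}_1=\bar{\bm x}_2$, or that one of them lies on $\partial D$. If the centers collapse, the interaction term $\int_D\zeta_n^+\mathcal G\zeta_n^-$ would have to blow up like $-\tfrac{1}{2\pi}\kappa_{n,1}\kappa_{n,2}\ln(\text{dist})\to+\infty$ (note $\kappa_1\kappa_2<0$ so $-\kappa_1\kappa_2>0$), which forces $E(\zeta_n)\leq -\tfrac{\kappa_{n,1}^2}{4\pi}\ln\varepsilon_{n,1}-\tfrac{\kappa_{n,2}^2}{4\pi}\ln\varepsilon_{n,2}-(\text{something}\to-\infty)$, contradicting the lower bound for $M_{\bm\varepsilon}$ just obtained. (One must be a little careful here because the supports, not just the centers, could interpenetrate; I would use the diameter bound $O(\varepsilon_{n,i})$ together with the center-distance going to zero to still get $\int\zeta_n^+\mathcal G\zeta_n^-\to+\infty$, splitting into the region where $|\bm x-\bm y|$ is comparable to the center distance and the region where it is $O(\varepsilon)$, the latter contributing a bounded amount by the same Hölder/rearrangement estimates used throughout Section 4.) Similarly, if $\bar{\bm x}_1\in\partial D$, then $H(\mathbf X(\zeta_n^+))\to+\infty$ (the Robin function blows up at the boundary), which makes the self-energy $E(\zeta_n^+)$ smaller than $-\tfrac{\kappa_{n,1}^2}{4\pi}\ln\varepsilon_{n,1}-C$ by an amount tending to $+\infty$ in the wrong direction — again contradicting Proposition \ref{p33} combined with the interaction bound \eqref{u303}. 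So $\bar{\bm x}_1,\bar{\bm x}_2\in D$ and $\bar{\bm x}_1\neq\bar{\bm x}_2$.

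Once separation is known, the interaction and regular terms all converge, and combining the sharp lower bound for $M_{\bm\varepsilon_n}=E(\zeta_n)$ with the sharp upper bound gives
\[
-W_{\bm\kappa}(\bar{\bm x}_1,\bar{\bm x}_2)\ \geq\ -\inf_{(\bm y_1,\bm y_2)}W_{\bm\kappa}(\bm y_1,\bm y_2),
\]
i.e. $(\bar{\bm x}_1,\bar{\bm x}_2)$ minimizes $W_{\bm\kappa}$ (the infimum is attained in $D\times D$ since $\kappa_1\kappa_2<0$, as noted in case (iii) of the introduction, because $W_{\bm\kappa}\to+\infty$ both on the boundary and on the diagonal). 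The main obstacle I anticipate is the bookkeeping in the upper bound for $E(\zeta_n^\pm)$: getting the $O(1)$ term to be exactly $-\tfrac12\kappa_{n,i}^2 H(\text{center})+o(1)$ rather than just $O(1)$ requires showing the vorticity genuinely concentrates at its center at the right rate, i.e. combining diam$(V^\pm_{\zeta_n})=O(\varepsilon_{n,i})$ with control of $\int\int\ln\frac{1}{|\bm x-\bm y|}\zeta_n^\pm\zeta_n^\pm$ near the diagonal via (H2) and the rescaled densities $\varrho_{\bm\varepsilon,i}$, plus uniform continuity of $h$ on a compact neighborhood of the limiting center — and doing the analogous splitting for the cross term when the centers are close but not yet known to be separated. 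This is the same circle of estimates already deployed in Section 4, so it is technical rather than conceptually new.
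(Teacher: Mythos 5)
Your overall architecture (compare the maximizer with a symmetric-decreasing test configuration centered at candidate points, use the Riesz rearrangement inequality to control the singular self-energies, and read off the Kirchhoff--Routh comparison) is the same as the paper's, and your separation/interiority arguments via \eqref{u303} and the blow-up of the Robin function are sound. But the sharp asymptotics you build the argument on are not correct as stated. The expansion
\[
E(\zeta_n^+)\leq -\tfrac{\kappa_{n,1}^2}{4\pi}\ln\varepsilon_{n,1}-\tfrac12\kappa_{n,1}^2H(\mathbf X(\zeta_n^+))+o(1)
\]
is false in general: rescaling the logarithmic self-interaction produces an additional $O(1)$ term $\frac{1}{4\pi}\iint\ln\frac{1}{|\bm x-\bm y|}\varrho_{\bm\varepsilon_n,1}(\bm x)\varrho_{\bm\varepsilon_n,1}(\bm y)\,d\bm xd\bm y$ (for a patch profile this equals $\kappa^2/(16\pi)\neq0$), which depends on the rearrangement class, is not $o(1)$, and under (H1)--(H2) alone need not even converge as $n\to\infty$. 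The same term is missing from your claimed lower bound for $M_{\bm\varepsilon}$, so your displayed expansion of $M_{\bm\varepsilon}$ with $O(1)$ constant exactly $-\inf W_{\bm\kappa}$ is also wrong. You flag this as "bookkeeping," but the proposed fix --- showing the vorticity concentrates fast enough that the $O(1)$ term is exactly $-\tfrac12\kappa^2H$ --- is not achievable; the profile contribution is genuinely there.

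The repair is exactly what the paper does: never identify these constants. Write $E(\zeta_n)\geq E(v_n)$ with $v_n$ built from the symmetric-decreasing rearrangements of $\zeta_n^{\pm}$ themselves (so the unknown profile constants are literally the same objects on both sides up to the Riesz inequality of Lemma \ref{rri2}), subtract, and observe that both the $\ln\varepsilon$ terms and the profile-dependent self-interaction terms cancel, leaving only the inequality \eqref{bk9} between the cross term and the regular parts. Boundedness of its right-hand side then gives interiority and separation in one stroke, and passing to the limit gives $W_{\bm\kappa}(\bar{\bm x}_1,\bar{\bm x}_2)\leq W_{\bm\kappa}(\bm x_1,\bm x_2)$ for every admissible pair. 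So your proposal is salvageable, but only by replacing the two-sided sharp expansion with the one-sided cancellation argument; as written, the central intermediate claims do not hold.
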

\begin{proof}
Fix $\bm x_{1},\bm x_{2}\in D,$ $\bm x_{1}\neq \bm x_{2}.$ 
 Define a sequence of test functions $\{v_n\},$
\begin{equation}\label{yyss}
v_{n}=v_{n,1}-v_{n,2},\quad v_{n,1}(\bm x):=\rho_{\bm\varepsilon_n,1}(\bm x-\bm x_{1}),\quad v_{n,2}(\bm x):=\rho_{\bm\varepsilon_n,2}(\bm x-\bm x_{2}),\end{equation}
where, as in Section 2, $\rho_{\bm\varepsilon_n,1}$ and $\rho_{\bm\varepsilon_n,2}$ are the symmetric-decreasing rearrangement of $\zeta_n^+$ and $\zeta_n^-$  with respect to the origin, respectively.
It is easy to see that    $v_{n}\in\mathcal R_{\bm\varepsilon_{n}}$ if $n$ is large enough. Hence we have that 
 \begin{equation}\label{bk1}
E(\zeta_{n})\geq E(v_{n})\quad\forall\,n.
\end{equation}
For $E(\zeta_{n}),$ we have that
\begin{equation}\label{bk2}
\begin{split}
 E(\zeta_{n})
=&\frac{1}{4\pi}\int_{D}\int_{D}\ln\frac{1}{|\bm x-\bm y|}\zeta^{+}_{n}(\bm x)\zeta^{+}_{n}(\bm y)d\bm xd\bm y+\frac{1}{4\pi}\int_{D}\int_{D}\ln\frac{1}{|\bm x-\bm y|}\zeta^{-}_{n}(\bm x)\zeta^{-}_{n}(\bm y)d\bm xd\bm y\\
&-\frac{1}{2\pi}\int_{D}\int_{D}\ln\frac{1}{|\bm x-\bm y|}\zeta^{+}_{n}(\bm x)\zeta^{-}_{n}(\bm y)d\bm xd\bm y-\frac{1}{2}\int_{D}\int_{D}h(\bm x,\bm y)\zeta^{+}_{n}(\bm x)\zeta^{+}_{n}(\bm y)d\bm xd\bm y\\
&-\frac{1}{2}\int_{D}\int_{D}h(\bm x,\bm y)\zeta^{-}_{n}(\bm x)\zeta^{-}_{n}(\bm y)d\bm xd\bm y+\int_{D}\int_{D}h(\bm x,\bm y)\zeta^{+}_{n}(\bm x)\zeta^{-}_{n}(\bm y)d\bm xd\bm y.
\end{split}
\end{equation}
For $E(v_{n}),$ we have that
\begin{equation}\label{bk3}
\begin{split}
E(v_{n})=&\frac{1}{4\pi}\int_{D}\int_{D}\ln\frac{1}{|\bm x-\bm y|}v_{n,1}(\bm x)v_{n,1}(\bm y)d\bm xd\bm y+\frac{1}{4\pi}\int_{D}\int_{D}\ln\frac{1}{|\bm x-\bm y|}v_{n,2}(\bm x)v_{n,2}(\bm y)d\bm xd\bm y\\
&-\frac{1}{2\pi}\int_{D}\int_{D}\ln\frac{1}{|\bm x-\bm y|}v_{n,1}(\bm x)v_{n,2}(\bm y)d\bm xd\bm y-\frac{1}{2}\int_{D}\int_{D}h(\bm x,\bm y)v_{n,1}(\bm x)v_{n,1}(\bm y)d\bm xd\bm y\\
&-\frac{1}{2}\int_{D}\int_{D}h(\bm x,\bm y)v_{n,2}(\bm x)v_{n,2}(\bm y)d\bm xd\bm y+\int_{D}\int_{D}h(\bm x,\bm y)v_{n,1}(\bm x)v_{n,2}(\bm y)d\bm xd\bm y.
\end{split}
\end{equation}
Applying Lemma \ref{rri2}, we have that
\begin{equation}\label{bk4}
 \int_{D}\int_{D}\ln\frac{1}{|\bm x-\bm y|}v_{n,1}(\bm x)v_{n,1}(\bm y)d\bm xd\bm y \geq \int_{D}\int_{D}\ln\frac{1}{|\bm x-\bm y|}\zeta^{+}_{n}(\bm x)\zeta^{+}_{n}(\bm y)d\bm xd\bm y,
\end{equation}
\begin{equation}\label{bk5}
 \int_{D}\int_{D}\ln\frac{1}{|\bm x-\bm y|}v_{n,2}(\bm x)v_{n,2}(\bm y)d\bm xd\bm y \geq \int_{D}\int_{D}\ln\frac{1}{|\bm x-\bm y|}\zeta^{-}_{n}(\bm x)\zeta^{-}_{n}(\bm y)d\bm xd\bm y.
\end{equation}
By \eqref{bk1}-\eqref{bk5}, we deduce that
\begin{equation}\label{bk6}
\begin{split}
&-\frac{1}{2\pi}\int_{D}\int_{D}\ln\frac{1}{|\bm x-\bm y|}\zeta^{+}_{n}(\bm x)\zeta^{-}_{n}(\bm y)d\bm xd\bm y-\frac{1}{2}\int_{D}\int_{D}h(\bm x,\bm y)\zeta^{+}_{n}(\bm x)\zeta^{+}_{n}(\bm y)d\bm xd\bm y\\
&-\frac{1}{2}\int_{D}\int_{D}h(\bm x,\bm y)\zeta^{-}_{n}(\bm x)\zeta^{-}_{n}(\bm y)d\bm xd\bm y+\int_{D}\int_{D}h(\bm x,\bm y)\zeta^{+}_{n}(\bm x)\zeta^{-}_{n}(\bm y)d\bm xd\bm y\\
\geq & -\frac{1}{2\pi}\int_{D}\int_{D}\ln\frac{1}{|\bm x-\bm y|}\varrho_{n,1}(\bm x)\varrho_{n,2}(\bm y)d\bm xd\bm y-\frac{1}{2}\int_{D}\int_{D}h(\bm x,\bm y)\varrho_{n,1}(\bm x)\varrho_{n,1}(\bm y)d\bm xd\bm y\\
&-\frac{1}{2}\int_{D}\int_{D}h(\bm x,\bm y)v_{n,2}(\bm x)v_{n,2}(\bm y)d\bm xd\bm y+\int_{D}\int_{D}h(\bm x,\bm y)v_{n,1}(\bm x)v_{n,2}(\bm y)d\bm xd\bm y,
\end{split}
\end{equation}
or equivalently,
\begin{equation}\label{bk9}
\begin{split}
& \int_{D}\int_{D}G(\bm x,\bm y)\zeta^{+}_{n}(\bm x)\zeta^{-}_{n}(\bm y)d\bm xd\bm y+\frac{1}{2}\int_{D}\int_{D}h(\bm x,\bm y)\zeta^{+}_{n}(\bm x)\zeta^{+}_{n}(\bm y)d\bm xd\bm y\\
&+\frac{1}{2}\int_{D}\int_{D}h(\bm x,\bm y)\zeta^{-}_{n}(\bm x)\zeta^{-}_{n}(\bm y)d\bm xd\bm y\\
\leq & \int_{D}\int_{D}G(\bm x,\bm y)v_{n,1}(\bm x)v_{n,2}(\bm y)d\bm xd\bm y+\frac{1}{2}\int_{D}\int_{D}h(\bm x,\bm y)v_{n,1}(\bm x)v_{n,1}(\bm y)d\bm xd\bm y\\
&+\frac{1}{2}\int_{D}\int_{D}h(\bm x,\bm y)v_{n,2}(\bm x)v_{n,2}(\bm y)d\bm xd\bm y.
\end{split}
\end{equation}
It is clear that the right-hand side of \eqref{bk9} converges to
\[-\kappa_{1}\kappa_{2}G( {\bm x}_{1}, {\bm x}_{2})+\frac{1}{2}\kappa_{1}^{2}h( {\bm x}_{1}, {\bm x}_{1})+\frac{1}{2}\kappa_{2}^{2}h( {\bm x}_{2}, {\bm x}_{2})\]
as $n\to+\infty,$
which is finite. Therefore $ \bar{\bm x}_{1},\bar{\bm x}_{2}\in  D$ and $ \bar{\bm x}_{1}\neq \bar{\bm x}_{2}$, since otherwise the left-hand of \eqref{bk9} would converge  to infinity. 
Passing to the limit $n\to+\infty$, we obtain from \eqref{bk9} that
\begin{equation}\label{bk60}
\begin{split}
 &-\kappa_{1}\kappa_{2}G(\bar{\bm x}_{1},\bar{\bm x}_{2})+\frac{1}{2}\kappa_{1}^{2}h(\bar{\bm x}_{1},\bar{\bm x}_{1})+\frac{1}{2}\kappa_{2}^{2}h(\bar{\bm x}_{2},\bar{\bm x}_{2})\\
 \leq &-\kappa_{1}\kappa_{2}G( {\bm x}_{1}, {\bm x}_{2})+\frac{1}{2}\kappa_{1}^{2}h( {\bm x}_{1}, {\bm x}_{1})+\frac{1}{2}\kappa_{2}^{2}h( {\bm x}_{2}, {\bm x}_{2}),
 \end{split}
 \end{equation}
which is exactly
\begin{equation}\label{habb}
W_{\bm\kappa}(\bar{\bm x}_{1},\bar{\bm x}_{2})\leq W_{\bm\kappa}( {\bm x}_{1}, {\bm x}_{2}).
\end{equation}
Since \eqref{habb} holds for arbitrary $ \bm x_{1},\bm x_{2}\in D$ such that $\bm x_1\neq \bm x_2$, we deduce that 
\[W_{\bm\kappa}(\bar{\bm x}_{1},\bar{\bm x}_{2})=\min_{\bm x_1,\bm x_2\in D, \,\bm x_1\neq \bm x_2}W_{\bm\kappa}( {\bm x}_{1}, {\bm x}_{2}).\]

\end{proof}

\section{Proof of Theorem \ref{thm2}}
In this section we give the proof of Theorem \ref{thm2}.

Fix a sequence $\{\bm\varepsilon_{n}\}$ such that $|\bm\varepsilon_{n}|\to 0$ as $n\to+\infty$, and a sequence $\{\zeta_{n}\}$ such that $\zeta_{n}\in\mathcal M_{\bm\varepsilon_{n}}$ for each $n$.  Denote
\[\xi_{n,i}=\xi_{\bm\varepsilon_n,i},\quad \varrho_{n,i}=\varrho_{\bm\varepsilon_n,i},\quad i=1,2.\]
Then $\xi_{n,i}$ satisfies
\begin{equation}\label{yys1}
\xi_{n,i}\geq 0,\quad \int_{\mathbb R^{2}}\bm x\xi_{n,i}(\bm x)d\bm x=\bm 0,\quad \mbox{\rm supp}(\xi_{n,i})\subset B_{C}(\bm 0),\quad \|\xi_{n,i}\|_{L^{p}(\mathbb R^{2})}\leq C, \quad i=1,2,
\end{equation}
and 
$\varrho_{n,i} $ satisfies
\begin{equation}\label{yys2}
\mbox{\rm supp}(\varrho_{n,i})\subset B_{1}(\bm 0),\quad \|\varrho_{n,i}\|_{L^{p}(\mathbb R^{2})}\leq C,\quad i=1,2,
\end{equation}
where $C>0$ is independent of $\bm\varepsilon.$
 
 Suppose $\varrho_{n,1}, \varrho_{n,2}$ converge weakly in $L^p(\mathbb R^2)$  to $\varrho_1, \varrho_2$ as $n\to+\infty,$ respectively. We need only to show that  $\xi_{n,1}, \xi_{n,2}$ also converge weakly to $\varrho_1, \varrho_2$  in $L^p(\mathbb R^2)$  as $n\to+\infty.$  
 
Let $(\bar{\bm x}_{1}, \bar{\bm x}_{2})$ be a global minimum point of $W_{\bm\kappa}$.
Define $v_{n}=v_{n,1}-v_{n,2},$
\begin{equation}\label{yyss2}
v_{n,1}(\bm x):=\rho_{\bm\varepsilon_n,1}(\bm x-\bar{\bm x}_{1}),\quad v_{n,2}(\bm x):=\rho_{\bm\varepsilon_n,2}(\bm x-\bar{\bm x}_{2}),\end{equation}
 Then for sufficiently large $n$, it holds that $v_{n}\in\mathcal R_{\bm\varepsilon_{n}},$ and thus  
\begin{equation}\label{yyso}
E(\zeta_{n})\geq E(v_{n})\quad\forall\,n.
\end{equation}
In view of the definition of $v_n$, we can write  $E(v_{n})$ as follows:
\begin{equation}\label{gnl1}
\begin{split}
E(v_{n})=&\frac{1}{4\pi}\int_{D}\int_{D}\ln\frac{1}{|\bm x-\bm y|}v_{n,1}(\bm x)v_{n,1}(\bm y)d\bm xd\bm y+\frac{1}{4\pi}\int_{D}\int_{D}\ln\frac{1}{|\bm x-\bm y|}v_{n,2}(\bm x)v_{n,2}(\bm y)d\bm xd\bm y\\
&-\frac{1}{2\pi}\int_{D}\int_{D}\ln\frac{1}{|\bm x-\bm y|}v_{n,1}(\bm x)v_{n,2}(\bm y)d\bm xd\bm y-\frac{1}{2}\int_{D}\int_{D}h(\bm x,\bm y)v_{n,1}(\bm x)v_{n,1}(\bm y)d\bm xd\bm y\\
&-\frac{1}{2}\int_{D}\int_{D}h(\bm x,\bm y)v_{n,2}(\bm x)v_{n,2}(\bm y)d\bm xd\bm y+\int_{D}\int_{D}h(\bm x,\bm y)v_{n,1}(\bm x)v_{n,2}(\bm y)d\bm xd\bm y\\
=&\frac{1}{4\pi}\int_{D}\int_{D}\ln\frac{1}{|\bm x-\bm y|}v_{n,1}(\bm x)v_{n,1}(\bm y)d\bm xd\bm y+\frac{1}{4\pi}\int_{D}\int_{D}\ln\frac{1}{|\bm x-\bm y|}v_{n,2}(\bm x)v_{n,2}(\bm y)d\bm xd\bm y\\
&-\frac{1}{2\pi}\kappa_{n,1}\kappa_{n,2}\ln|\bar{\bm x}_1-\bar{\bm x}_2|-\frac{1}{2}\kappa_{n,1}^2h(\bar{\bm x}_1,\bar{\bm x}_1)-\frac{1}{2}\kappa_{n,2}^2h(\bar{\bm x}_2,\bar{\bm x}_2)-\kappa_{n,1}\kappa_{n,2}h(\bar{\bm x}_1,\bar{\bm x}_2)\\
=&\frac{1}{4\pi}\int_{D}\int_{D}\ln\frac{1}{|\bm x-\bm y|}v_{n,1}(\bm x)v_{n,1}(\bm y)d\bm xd\bm y+\frac{1}{4\pi}\int_{D}\int_{D}\ln\frac{1}{|\bm x-\bm y|}v_{n,2}(\bm x)v_{n,2}(\bm y)d\bm xd\bm y\\
&-W_{\bm\kappa}(\bar{\bm x}_1,\bar{\bm x}_2)+\alpha_n,
\end{split}
\end{equation}
where $\{\alpha_n\}\subset\mathbb R$ such that $\alpha_n\to 0$ as $n\to+\infty.$
In an analogous way, we can use Propositions \ref{prp45} and  \ref{llvv}  to write $E(\zeta_{n})$ as follows:
\begin{equation}\label{gnl2}
\begin{split}
E(\zeta_{n})=&\frac{1}{4\pi}\int_{D}\int_{D}\ln\frac{1}{|\bm x-\bm y|}\zeta^{+}_{n}(\bm x)\zeta^{+}_{n}(\bm y)d\bm xd\bm y+\frac{1}{4\pi}\int_{D}\int_{D}\ln\frac{1}{|\bm x-\bm y|}\zeta^{-}_{n}(\bm x)\zeta^{-}_{n}(\bm y)d\bm xd\bm y\\
&-W_{\bm\kappa}(\bar{\bm x}_1,\bar{\bm x}_2)+\beta_n,
\end{split}
\end{equation}
where $\{\beta_n\}\subset\mathbb R$ such that $\beta_n\to 0$ as $n\to+\infty.$
From  \eqref{yyso}-\eqref{gnl2}, we obtain that 
\begin{equation}\label{gnl3}
\begin{split}
& \int_{D}\int_{D}\ln\frac{1}{|\bm x-\bm y|}v_{n,1}(\bm x)v_{n,1}(\bm y)d\bm xd\bm y+ \int_{D}\int_{D}\ln\frac{1}{|\bm x-\bm y|}v_{n,2}(\bm x)v_{n,2}(\bm y)d\bm xd\bm y\\
\leq & \int_{D}\int_{D}\ln\frac{1}{|\bm x-\bm y|}\zeta^{+}_{n}(\bm x)\zeta^{+}_{n}(\bm y)d\bm xd\bm y+ \int_{D}\int_{D}\ln\frac{1}{|\bm x-\bm y|}\zeta^{-}_{n}(\bm x)\zeta^{-}_{n}(\bm y)d\bm xd\bm y+\gamma_n 
\end{split}
\end{equation}
for some $\{\gamma_n\}\subset\mathbb R$ such that $\gamma_n\to 0$ as $n\to+\infty.$
By a direct calculation, we deduce from \eqref{gnl3} that 
\begin{equation}\label{gnl4}
\begin{split}
&\int_{\mathbb R^{2}}\int_{\mathbb R^{2}}\ln\frac{1}{|\bm x-\bm y|}\varrho_{n,1}(\bm x)\varrho_{n,1}(\bm y)d\bm x d\bm y+\int_{\mathbb R^{2}}\int_{\mathbb R^{2}}\ln\frac{1}{|\bm x-\bm y|}\varrho_{n,2}(\bm x)\varrho_{n,2}(\bm y)d\bm x d\bm y\\
\leq &\int_{\mathbb R^{2}}\int_{\mathbb R^{2}}\ln\frac{1}{|\bm x-\bm y|}\xi_{n,1}(\bm x)\xi_{n,1}(\bm y)d\bm x d\bm y+\int_{\mathbb R^{2}}\int_{\mathbb R^{2}}\ln\frac{1}{|\bm x-\bm y|}\xi_{n,2}(\bm x)\xi_{n,2}(\bm y)d\bm x d\bm y+\gamma_n.
\end{split}
\end{equation}
On the other hand, by Lemma \ref{rri2},
\begin{equation}\label{gnl5}
 \int_{\mathbb R^{2}}\int_{\mathbb R^{2}}\ln\frac{1}{|\bm x-\bm y|}\xi_{n,1}(\bm x)\xi_{n,1}(\bm y)d\bm x d\bm y\leq \int_{\mathbb R^{2}}\int_{\mathbb R^{2}}\ln\frac{1}{|\bm x-\bm y|}\varrho_{n,1}(\bm x)\varrho_{n,1}(\bm y)d\bm x d\bm y,
\end{equation}
\begin{equation}\label{gnl6}
 \int_{\mathbb R^{2}}\int_{\mathbb R^{2}}\ln\frac{1}{|\bm x-\bm y|}\xi_{n,2}(\bm x)\xi_{n,2}(\bm y)d\bm x d\bm y\leq \int_{\mathbb R^{2}}\int_{\mathbb R^{2}}\ln\frac{1}{|\bm x-\bm y|}\varrho_{n,2}(\bm x)\varrho_{n,2}(\bm y)d\bm x d\bm y.
\end{equation}
Suppose there is some subsequence of $\{\xi_n\}$, denoted by $\xi_{n_{j},i},$ such that $\xi_{n_{j},i}\rightharpoonup \xi_{i} $  in $L^p(\mathbb R^2)$ as $j\to+\infty$, $i=1,2$. From \eqref{gnl4}-\eqref{gnl6}, we can pass to the limit $j\to+\infty$ to obtain that
\begin{align*}
 \int_{\mathbb R^{2}}\int_{\mathbb R^{2}}\ln\frac{1}{|\bm x-\bm y|}\xi_{1}(\bm x)\xi_{1}(\bm y)d\bm x d\bm y= \int_{\mathbb R^{2}}\int_{\mathbb R^{2}}\ln\frac{1}{|\bm x-\bm y|}\varrho_{1}(\bm x)\varrho_{1}(\bm y)d\bm x d\bm y,
 \end{align*}
 \begin{align*}
 \int_{\mathbb R^{2}}\int_{\mathbb R^{2}}\ln\frac{1}{|\bm x-\bm y|}\xi_{2}(\bm x)\xi_{2}(\bm y)d\bm x d\bm y= \int_{\mathbb R^{2}}\int_{\mathbb R^{2}}\ln\frac{1}{|\bm x-\bm y|}\varrho_{2}(\bm x)\varrho_{2}(\bm y)d\bm x d\bm y.
\end{align*}
Applying Lemma \ref{bgu} (note that the assumptions in Lemma \ref{bgu} are satisfied by \eqref{yys1}), we deduce that $\xi_{i}=\varrho_{i},$ $i=1,2$. Hence $\xi_{n_{j},i}\rightharpoonup \varrho_{i}$  in $L^p(\mathbb R^2)$ as $j\to+\infty$, $i=1,2$.  Arguing by contradiction, we can further show that weak convergence actually holds for the whole sequence $\{\xi_{n,i}\},$ $i=1,2$.
Hence item (i) of Theorem \ref{thm2} has been proved.

If additionally $\varrho_{n,i}\to \varrho_{i}$ in $L^{p}(\mathbb R^{2})$ as $n\to+\infty$, $i=1,2,$ then 
\[\lim_{n\to+\infty}\|\varrho_{n,i}\|_{L^{p}(\mathbb R^{2})}= \|\varrho_{i}\|_{L^{p}(\mathbb R^{2})}, \quad i=1,2,\]
which implies that
\[\lim_{n\to+\infty}\|\xi_{n,i}\|_{L^{p}(\mathbb R^{2})}= \|\varrho_{i}\|_{L^{p}(\mathbb R^{2})}\quad i=1,2.\] 
In combination with the weak convergence proved above,  we deduce by uniform convexity that $\xi_{n,i}\to \varrho_{i}$  in $L^p(\mathbb R^2)$ as $n\to+\infty$, $i=1,2$, which verifies  item (ii) of Theorem \ref{thm2}.

\bigskip
 \noindent{\bf Acknowledgements:}
{G. Wang was supported by National Natural Science Foundation of China (12001135) and China Postdoctoral Science Foundation (2019M661261, 2021T140163). B. Zuo was supported by National Natural Science Foundation of China (12101154).}

 

\phantom{s}
 \thispagestyle{empty}


\begin{thebibliography}{99}
\bibitem{BGu}
A. Burchard and Y. Guo, Compactness via symmetrization, \textit{J. Funct. Anal.}, 214(2004), 40--73.

\bibitem{B1}
G. R. Burton, Rearrangements of functions, maximization of convex functionals, and vortex rings, \textit{Math. Ann.}, 276(1987), 225--253.

\bibitem{B2}
G. R. Burton, Variational problems on classes of rearrangements and multiple configurations for steady vortices, \textit{Ann. Inst. Henri Poincar\'e. Analyse Nonlin\'eare.}, 6(1989), 295--319.

 
\bibitem{B4}
G. R. Burton, Global nonlinear stability for steady ideal fluid flow in bounded planar domains,
\textit{Arch. Ration. Mech. Anal.}, 176(2005), 149--163.







 
\bibitem{CF}
L. Caffarelli  and A. Friedman,
 Convexity of solutions of semilinear elliptic equations, \textit{Duke Math. J.}, 52(1985), 431--456.

 

\bibitem{CLW}
D. Cao, Z. Liu and J. Wei, Regularization of point vortices  for the
Euler equation in dimension two, \textit{Arch. Ration. Mech. Anal.},
212(2014), 179--217.


\bibitem{CPY}
D. Cao, S. Peng and S. Yan,  Planar vortex patch problem in incompressible steady flow,\textit{ Adv. Math.}, 270(2015), 263--301.
\bibitem{CQZZ}
D. Cao, G. Qin, W.  Zhan and C. Zou, On the global classical solutions for the generalized SQG equation, \textit{J. Funct. Anal.}, 283(2022), Paper No. 109503.


\bibitem{CWWZ}
D. Cao, J. Wan, G. Wang and W. Zhan, Asymptotic behaviour of global vortex rings, \textit{Nonlinearity}, 35(2022), 3680--3705.


\bibitem{CWCV}
D. Cao and G. Wang, Steady vortex patches with opposite rotation directions in a planar ideal fluid, \textit{Calc. Var. Partial Differential Equations,} 58(2019), Paper No. 75.
\bibitem{CWP}
D. Cao and G. Wang, A note on steady vortex flows in two dimensions, \textit{Proc. Amer. Math. Soc.,}  148(2020), 1153--1159.


\bibitem{CWN}
D. Cao and G. Wang, Nonlinear stability of planar vortex patches in an ideal fluid, \textit{J. Math. Fluid Mech.}, 23(2021), Paper No. 58.

\bibitem{CWZ}
D. Cao,  G. Wang and W. Zhan, Desingularization of vortices for two-dimensional steady Euler flows via the vorticity method, \textit{SIAM J. Math. Anal.}, 52(2020), 5363--5388.


\bibitem{Cap}
L. Caprini and C. Marchioro,  Concentrated Euler flows and point vortex model, \textit{Rend. Mat. Appl.},  36(2015), 11--25.

\bibitem{CS1}
S. Ceci and C. Seis, Vortex dynamics for 2D Euler flows with unbounded vorticity, \textit{Rev. Mat. Iberoam.}, 37(2021),  1969--990.


\bibitem{CS2}
S. Ceci and C. Seis,  On the dynamics of point vortices for the two-dimensional Euler equation with $L^{p}$ vorticity, \textit{Philos. Trans. Roy. Soc. A}, 380(2022),  Paper No. 20210046.
 

\bibitem{Dek}
J. Dekeyser,  Asymptotic of steady vortex pair in the lake equation, \textit{SIAM J. Math. Anal.}, 51(2019), 1209--1237.

\bibitem{De}
J. M. Delort, Existence de nappes de tourbillon en dimension deux, \textit{J. Amer. Math. Soc.}, 4(1991), 553--586.


\bibitem{DM}
R. DiPerna and A. Majda, Concentrations in regularizations for 2D incompressible flow, \textit{Comm. Pure Appl. Math.}, 40(1987), 301--345.
\bibitem{Don}
M. Donati and D. Iftimie, Long time confinement of vorticity around a stable stationary point vortex in a bounded planar domain, \textit{Ann. Inst. Henri Poincar\'e. Analyse Nonlin\'eare.},
38(2021), 1461--1485.
 
\bibitem{EM2}
A. R. Elcrat and K. G. Miller, Rearrangements in steady multiple vortex flows,
 \textit{Comm. Partial Differential Equations}, 20(1994), 1481--1490.

\bibitem{EV} L. C. Evans and R. Gariepy, 
Measure theory and fine properties of functions (revised edition), \textit{Textbooks in Mathematics}, CRC Press, Boca Raton, FL, 2015.  


\bibitem{GT}
M. Grossi and F. Takahashi, Nonexistence of multi-bubble solutions to some elliptic equations on convex domains, \textit{J. Funct. Anal.}, 259(2010), 904--917.
\bibitem{Hel}
H. Helmholtz,  \"Uber Integrale der hydrodynamischen Gleichungen, welche den Wirbelbewegungen entsprechen, \textit{J. Mathematik,} 55(1858), 25--55.

 
\bibitem{Kir}G. R. Kirchhoff,  Vorlesungen \"uber mathematische physik, Teubner, Leipzig,1876.

 
 
\bibitem{LL} E. H. Lieb and M. Loss, Analysis, Second edition,\textit{ Graduate Studies in Mathematics, Vol. 14}. American Mathematical Society, Providence, RI (2001).


\bibitem{Lin}C. C. Lin, On the motion of vortices in two dimension--I. Existence of the Kirchhoff-Routh function, \textit{Proc. Natl. Acad. Sci. USA}, 27(1941), 570--575.

\bibitem{MB}
A. J. Majda and A. L. Bertozzi, Vorticity and incompressible flow, \textit{Cambridge Texts in Applied Mathematics, Vol. 27}, Cambridge University Press, 2002.


\bibitem{MP1}
C. Marchioro and M. Pulvirenti, Euler evolution for singular data and vortex theory, \textit{Comm. Math. Phys.}, 91(1983), 563--572.

\bibitem{MP2}
C. Marchioro, Euler evolution for singular initial data and vortex theory: a global solution, \textit{Comm. Math. Phys.}, 116(1988), 45--55.

\bibitem{MP3}
C. Marchioro and M. Pulvirenti, Vortices and localization in Euler flows, \textit{Comm. Math. Phys.}, 154(1993), 49--61.



\bibitem{MPu}
C. Marchioro and M. Pulvirenti, Mathematical theory of incompressible noviscous fluids, Springer-Verlag, 1994.


\bibitem{Rou}
 E. J. Routh,  Some applications of conjugate functions, \textit{Proc. Lond. Math.
Soc.}, 12(1880), 73--89.

 \bibitem{SV}
D. Smets and  J. Van Schaftingen, Desingulariation of vortices for
the Euler equation,
 \textit{ Arch. Ration. Mech. Anal.},  198(2010),   869--925.


\bibitem{T12}
B. Turkington, On steady vortex flow in two dimensions. I,II,
 \textit{Comm. Partial
Differential Equations},  8(1983),   999--1030, 1031--1071.


\bibitem{T3}
B. Turkington, On the evolution of a concentrated vortex in an idea fluid,
 \textit{ Arch. Ration. Mech. Anal.},  97(1987),  75--87.


\bibitem{V}M. Vishik, Incompressible flows of an ideal fluid with vorticity in borderline spaces of Besov type, \textit{Ann. Sci. \'Ecole Norm. Sup.},  32(1999), 769--812.

\bibitem{Wang}
G. Wang, Stability of two-dimensional steady Euler flows with concentrated vorticity, arXiv:2111.09476.

 

\bibitem{WZ}
 G. Wang and B. Zuo, An extension of Arnold's second stability theorem in a multiply-connected domain, arXiv:2208.10697. 
   
   \bibitem{Wo}W. Wolibner, Un theor\'{e}me sur l'existence du mouvement plan dun fluide parfait, homog\`{e}ne, incompressible, pendant un temps infiniment long (French), \textit{Math. Z.}, 37(1933), 698--726.

\bibitem{Y}
V. I. Yudovich, Non-stationary flow of an ideal incompressible fluid,
\textit{USSR Comp. Math. $\&$ Math.phys}, 3(1963), 1407--1456[English].


\end{thebibliography}
\end{document}